\newtheorem{thm}{Theorem}[section]
\newtheorem{lem}[thm]{Lemma}
\newtheorem{prop}[thm]{Proposition}
\newtheorem{cor}[thm]{Corollary}
\theoremstyle{definition}
\newtheorem{defn}[thm]{Definition}
\newtheorem{examp}[thm]{Example}
\theoremstyle{remark}
\newtheorem*{rem}{Remark}
\DeclareMathOperator{\tr}{Tr}
\DeclareMathOperator{\ct}{ct}
\DeclareMathOperator{\cha}{char}
\DeclareMathOperator{\disc}{disc}
\DeclareMathOperator{\im}{im}
\DeclareMathOperator{\Hom}{Hom}
\DeclareMathOperator{\Pic}{Pic}
\DeclareMathOperator{\Sym}{Sym}
\DeclareMathOperator{\End}{End}
\DeclareMathOperator{\Pl}{Pl}
\newcommand{\<}{\left\langle}
\renewcommand{\>}{\right\rangle}
\renewcommand{\(}{\left(}
\renewcommand{\)}{\right)}
\newcommand{\GL}{\mathrm{GL}}
\newcommand{\SL}{\mathrm{SL}}
\newcommand{\CC}{\mathbb{C}}
\newcommand{\FF}{\mathbb{F}}
\newcommand{\QQ}{\mathbb{Q}}
\newcommand{\PP}{\mathbb{P}}
\newcommand{\RR}{\mathbb{R}}
\newcommand{\ZZ}{\mathbb{Z}}
\renewcommand{\aa}{\mathfrak{a}}
\newcommand{\bb}{\mathfrak{b}}
\newcommand{\cc}{\mathfrak{c}}
\newcommand{\dd}{\mathfrak{d}}
\newcommand{\pp}{\mathfrak{p}}
\newcommand{\qq}{\mathfrak{q}}
\newcommand{\OO}{\mathcal{O}}
\newcommand{\ba}{\overline}
\newcommand{\cross}{\times}
\newcommand{\tensor}{\otimes}
\newcommand{\txi}{\tilde\xi}
\newcommand{\textand}{\quad \text{and} \quad}
\newcommand{\textor}{\quad \text{or} \quad}
\renewcommand{\to}{\mathop{\rightarrow}\limits}
\newcommand{\intsec}{\cap}
\newcommand{\ignore}[1]{}
\newcommand{\bbq}[8]{
\begin{minipage}{0.1\linewidth}
\xymatrix@!0{
& #5 \ar@{-}[rr]\ar@{-}[dd]
& & #6 \ar@{-}[dd]
\\
#1 \ar@{-}[ur]\ar@{-}[rr]\ar@{-}[dd]
& & #2 \ar@{-}[ur]\ar@{-}[dd]
\\
& #7 \ar@{-}[rr]
& & #8
\\
#3 \ar@{-}[rr]\ar@{-}[ur]
& & #4 \ar@{-}[ur]
}
\end{minipage}
}
\begin{document}

\begin{frontmatter}

\begin{fmbox}
\dochead{Research}


\title{Rings of small rank over a Dedekind domain and their ideals}


\author[
  addressref={home},
  email={emo916math@gmail.com}
]{\inits{EM}\fnm{Evan M} \snm{O'Dorney}}


\address[id=home]{
  \street{119 Shelterwood Lane},                     
  \postcode{94506}                            
  \city{Danville, CA},                        
  \cny{USA}                                   
}


\begin{artnotes}
\end{artnotes}

\end{fmbox}


\begin{abstractbox}

\begin{abstract} 
The aim of this paper is to find and prove generalizations of some of the beautiful integral parametrizations in Bhargava's theory of higher composition laws to the case where the base ring $\ZZ$ is replaced by an arbitrary Dedekind domain $R$. Specifically, we parametrize quadratic, cubic, and quartic algebras over $R$ as well as ideal classes in quadratic algebras, getting a description of the multiplication law on ideals that extends Bhargava's famous reinterpretation of Gauss composition of binary quadratic forms. We expect that our results will shed light on the statistical properties of number field extensions of degrees $2$, $3$, and $4$.
\end{abstract}


\begin{keyword}
\kwd{Dedekind domain}
\kwd{ring extension}
\kwd{Gauss composition}
\kwd{Bhargavology}
\end{keyword}

\begin{keyword}[class=AMS]
\kwd[Primary ]{13F05}
\kwd{11E20}
\kwd{11R11}
\kwd{11R16}
\kwd[; secondary ]{11E16}
\kwd{13B02}
\kwd{13A15}
\kwd{11E76}
\end{keyword}

\end{abstractbox}
%

\end{frontmatter}


\section{Introduction}

The mathematics that we will discuss has its roots in the investigations of classical number theorists---notably Fermat, Lagrange, Legendre, and Gauss (see \cite{potf}, Ch.~I)---who were interested in what integers are represented by expressions such as $x^2 + ky^2$, for fixed $k$. It became increasingly clear that in order to answer one such question, one had to understand the general behavior of expressions of the form
\[
  ax^2 + bxy + cy^2.
\]
These expressions are now called binary quadratic forms. It was Gauss who first discovered that, once one identifies forms that are related by a coordinate change $x \mapsto px + qy, y \mapsto rx + sy$ (where $ps - qr$ = 1), the forms whose \emph{discriminant} $D = b^2 - 4ac$ has a fixed value and which are \emph{primitive}, that is, $\gcd(a,b,c) = 1$, can be naturally given the structure of an abelian group, which has the property that if forms $\phi_1, \phi_2$ represent the numbers $n_1, n_2$, then their product $\phi_1 \ast \phi_2$ represents $n_1n_2$. This group law $\ast$ is commonly called \emph{Gauss composition.}

Gauss's construction of the product of two forms was quite ad hoc. Since Gauss's time, mathematicians have discovered various reinterpretations of the composition law on binary quadratic forms, notably:
\begin{itemize}
  \item Dirichlet, who discovered an algorithm simplifying the understanding and computation of the product of two forms, which we will touch on in greater detail (see Example \ref{ex:bbz}).
  \item Dedekind, who by introducing the now-standard notion of an ideal, transformed Gauss composition into the simple operation of multiplying two ideals in a quadratic ring of the form $\ZZ[(D + \sqrt{D})/2]$;
  \item Bhargava, who in 2004 astounded the mathematical community by deriving Gauss composition from simple operations on a $2\times 2\times 2$ cube \cite{B1}.
\end{itemize}
In abstraction, Bhargava's reinterpretation is somewhat intermediate between Dirichlet's and Dedekind's: it shares the integer-based concreteness of Gauss's original investigations, yet it also corresponds to natural constructions in the realm of ideals. One of the highlights of Bhargava's method is that it extends to give group structures on objects beyond binary quadratic forms, hence the title of his paper series, ``Higher composition laws.'' It also sheds light on previously inaccessible conjectures about Gauss composition, such as an estimate for the number of forms of bounded discriminant whose third power is the identity \cite{BV}.

A second thread that will be woven into this thesis is the study of finite ring extensions of $\ZZ$, often with a view toward finite field extensions of $\QQ$. Quadratic rings (that is, those having a $\ZZ$-basis with just two elements) are simply and classically parametrized by a single integer invariant, the \emph{discriminant}. For cubic rings, Delone and Faddeev prove a simple lemma (as one of many tools for studying irrationalities of degree $3$ and $4$ over $\QQ$) parametrizing them by binary cubic forms (\cite{DF}, pp.~101ff). A similar classification for quartic and higher rings proved elusive until Bhargava, using techniques inspired by representation theory, was able to parametrize quartic and quintic rings together with their cubic and sextic \emph{resolvent} rings, respectively, and thereby compute the asymptotic number of quartic and quintic rings and fields with bounded discriminant \cite{B3,Bd4c,B4,Bd5c}. The analytic virtue of Bhargava's method is to map algebraic objects such as rings and ideals to lattice points in bounded regions of $\RR^n$, where asymptotic counting is much easier. (Curiously enough, the ring parametrizations seem to reach a natural barrier at degree $5$, in contrast to the classical theory of solving equations by radicals where degree $4$ is the limit.)

Bhargava published these results over the integers $\ZZ$. Since then, experts have wondered whether his techniques apply over more general classes of rings; by far the most ambitious extensions of this sort are Wood's classifications of quartic algebras \cite{WQuartic} and ideals in certain $n$-ic algebras \cite{W2xnxn} over an arbitrary base scheme $S$. In this paper, all results are proved over an arbitrary Dedekind domain $R$. The use of a Dedekind domain has the advantage of remaining relevant to the original application (counting number fields and related structures) while introducing some new generality.

We will focus on two parametrizations that are representative of Bhargava's algebraic techniques in general. The first is the famous reinterpretation of Gauss composition in terms of $2 \times 2 \times 2$ boxes. Following \cite{B1}, call a triple $(I_1,I_2,I_3)$ of ideals of a quadratic ring $S$ \emph{balanced} if $I_1I_2I_3 \subseteq S$ and $N(I_1)N(I_2)N(I_3) = 1$, and call two balanced triples \emph{equivalent} if $I_i = \gamma_i I'_i$ for some scalars $\gamma_i \in S \tensor_\ZZ \QQ$ having product $1$. (If $S$ is Dedekind, as is the most common application, then the balanced triples of equivalence classes correspond to triples of ideal classes having product $1$.) Then:

\begin{thm}[\cite{B1}, Theorem 11]
There is a canonical bijection between
\begin{itemize}
  \item pairs $(S,(I_1,I_2,I_3))$ where $S$ is an oriented quadratic ring of nonzero discriminant over $\ZZ$ and $(I_1,I_2,I_3)$ is an equivalence class of balanced triples of ideals of $S$;
  \item trilinear maps $\beta : \ZZ^2 \tensor \ZZ^2 \tensor \ZZ^2 \to \ZZ$, up to $\SL_2 \ZZ$-changes of coordinates in each of the three inputs (subject to a certain nondegeneracy condition).
\end{itemize}
\end{thm}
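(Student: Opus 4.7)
The plan is to construct maps in both directions and verify that they are mutually inverse and respect the stated equivalence relations. Starting from an oriented quadratic ring $S$ and a balanced triple $(I_1,I_2,I_3)$, I would first choose a $\ZZ$-basis of each rank-$2$ module $I_i$ to obtain identifications $I_i \cong \ZZ^2$. The balanced hypothesis $I_1 I_2 I_3 \subseteq S$ makes ring multiplication a well-defined trilinear map $\mu : I_1 \otimes I_2 \otimes I_3 \to S$, and composing with the orientation-induced map $S \to \wedge^2 S \cong \ZZ$, $s \mapsto s \wedge 1$, produces the desired trilinear form $\beta : \ZZ^2 \otimes \ZZ^2 \otimes \ZZ^2 \to \ZZ$. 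One then checks by direct computation that a change of basis of $I_i$ by a matrix $g_i \in \SL_2\ZZ$ acts on $\beta$ by $g_i$ on the $i$-th tensor factor, and that replacing $I_i$ with $\gamma_i I_i$ where $\gamma_1\gamma_2\gamma_3 = 1$ leaves $\beta$ unchanged.

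For the reverse direction, given a nondegenerate cube $\beta$, I would slice it along each of its three coordinate axes to obtain three pairs of $2 \times 2$ matrices $(M_i,N_i)$, and form the associated binary quadratic forms $Q_i(x,y) = -\det(M_i x - N_i y)$. A combinatorial computation---the cube identity at the heart of Gauss composition---shows that the three $Q_i$ share a common discriminant $D$, which is the ``discriminant of the cube'' and is nonzero by the nondegeneracy hypothesis. I would then take $S$ to be the oriented quadratic ring of discriminant $D$ and use the classical correspondence between binary quadratic forms of discriminant $D$ and based fractional ideals in $S$ to extract the triple $(I_1,I_2,I_3)$ from $(Q_1,Q_2,Q_3)$. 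The balanced condition should then reduce to the statement that $[Q_1][Q_2][Q_3]$ is the identity class in $S$, which is the essential content of the cube-based formulation of Gauss composition.

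Verifying that the two constructions are mutually inverse is then essentially an entry-by-entry calculation: applying the forward construction to the triple extracted from a cube must reproduce the cube, and vice versa. I expect the main obstacle to be in the reverse direction: one must show that the raw combinatorial data of a cube genuinely determines a consistent ring-and-ideal structure, and in particular that the associativity of ideal multiplication---automatic on the forward side---holds for the ideals reconstructed from the three slice forms. Concretely, this amounts to verifying that the ideal product $I_i I_j$ (produced by one slicing) coincides, as a based fractional ideal, with $I_k^{-1}$ (produced by another) for each cyclic permutation $(i,j,k)$ of $(1,2,3)$, where the symmetric combinatorics of the cube really earn their keep. A subtler but more bookkeeping-style issue will be the precise interplay of $\GL_2\ZZ$ versus $\SL_2\ZZ$ coordinate changes with the orientation of $S$ and with the scalars $\gamma_i$.
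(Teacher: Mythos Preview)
Your forward direction is correct and matches the paper: from a balanced triple one builds $\beta(x\otimes y\otimes z)=\alpha(1\wedge xyz)$, and the equivariance under $\SL_2\ZZ$ and under scalings with $\gamma_1\gamma_2\gamma_3=1$ is routine.

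The reverse direction, however, has a genuine gap. You propose to extract each $I_i$ from its slice form $Q_i$ via the form--ideal dictionary, and then to check that the triple is balanced. This cannot work as stated, for a reason the paper itself highlights (see Example~\ref{ex:bbz}\ref{item:5i box}): the three associated quadratic forms do \emph{not} determine the cube. There exist inequivalent cubes with identical triples $(Q_1,Q_2,Q_3)$, corresponding to inequivalent balanced triples whose ideals lie in the same classes. What the three forms recover is each $I_i$ up to an \emph{independent} scaling in $S_K^\times$, whereas a balanced triple is only defined up to scalings $(\gamma_1,\gamma_2,\gamma_3)$ with $\gamma_1\gamma_2\gamma_3=1$; the missing $S_K^\times$ worth of data is precisely what is encoded in the full cube beyond its three slice-determinants. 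Relatedly, your proposed check that ``$I_iI_j$ coincides with $I_k^{-1}$'' presupposes invertibility, which the theorem does not assume, and your appeal to ``the cube-based formulation of Gauss composition'' is circular here, since that is exactly the statement to be established.

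The paper's remedy is to bypass the class-group viewpoint entirely. After reading off the common ring $S$ from the symmetric characteristic polynomial \eqref{eq:long ring} and the $\xi$-action \eqref{eq:xi action} on each $M_i$, one writes down explicit elements $\tau_{ijk}\in S$ (a ``voil\`a'' formula in all eight cube entries) that serve as the products $\eta_{1i}\eta_{2j}\eta_{3k}$. These assemble into an $S$-trilinear lift $\tilde\beta:M_1\otimes M_2\otimes M_3\to S$ of $\beta$, and it is this lift that pins down the compatible embeddings $M_i\hookrightarrow S_K$ and hence the balanced triple up to the correct equivalence. In short: slicing gives you the ring and the $S$-module structures, but you still need the $\tau_{ijk}$ to recover the multiplicative gluing of the three ideals.
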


Our parametrization is analogous, with one crucial difference. Whereas over $\ZZ$, the only two-dimensional lattice is $\ZZ^2$, over a Dedekind domain $R$ there are as many as there are ideal classes, and any such lattice can serve as the $R$-module structure of a quadratic algebra or an ideal thereof. Using a definition of balanced and equivalent essentially identical to Bhargava's (see Definition \ref{defn:balanced}), we prove:
\begin{thm}[see Theorem \ref{thm:box}]
Let $R$ be a Dedekind domain. There is a canonical bijection between
\begin{itemize}
  \item pairs $(S,(I_1,I_2,I_3))$ where $S$ is an oriented quadratic algebra over $R$ and $(I_1,I_2,I_3)$ is an equivalence class of balanced triples of ideals of $S$;
  \item quadruples $(\aa, (M_1,M_2,M_3), \theta, \beta)$ where $\aa$ is an ideal class of $R$, $M_i$ are lattices of rank $2$ over $R$ (up to isomorphism), $\theta : \Lambda^2 M_1 \tensor \Lambda^2 M_2 \tensor \Lambda^2 M_3 \to \aa^3$ is an isomorphism, and $\beta : M_1 \tensor M_2 \tensor M_3 \to \aa$ is a trilinear map whose three partial duals $\beta_i : M_j \tensor M_k \to \aa M_i^*$ $(\{i,j,k\} = \{1,2,3\})$ have image a full-rank sublattice.
\end{itemize}
Under this bijection, we get identifications $\Lambda^2 S \cong \aa$ and $I_i \cong M_i$.
\end{thm}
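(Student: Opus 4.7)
The plan is to construct the bijection in both directions and verify they are mutually inverse. Localization at primes of $R$ reduces most technical verifications to the case of a DVR, where all rank-$2$ projective modules are free and Bhargava's original Theorem~11 applies essentially verbatim (with at most a modest adaptation to general PIDs).

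In the forward direction, given $(S, (I_1, I_2, I_3))$, I would set $\aa := \Lambda^2_R S$ (with the orientation built in), $M_i := I_i$ as $R$-modules, and define
\[
\beta \,:\, I_1 \tensor_R I_2 \tensor_R I_3 \xrightarrow{\text{mult}} S \twoheadrightarrow S/R \cong \Lambda^2 S = \aa,
\]
which is well-defined by the balanced hypothesis $I_1 I_2 I_3 \subseteq S$. The isomorphism $\theta$ comes from the natural identifications $\Lambda^2 I_i \cong N(I_i) \cdot \Lambda^2 S$ combined with $N(I_1) N(I_2) N(I_3) = 1$, yielding $\Lambda^2 M_1 \tensor \Lambda^2 M_2 \tensor \Lambda^2 M_3 \cong (\Lambda^2 S)^{\tensor 3} = \aa^3$. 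The full-rank image condition on the partial duals $\beta_i$ follows from nondegeneracy of multiplication in $S \tensor_R \Frac(R)$, and the rescaling ambiguity $I_i \mapsto \gamma_i I_i$ with $\gamma_1 \gamma_2 \gamma_3 = 1$ matches the ambiguity in the isomorphism class of the quadruple.

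In the reverse direction, given $(\aa, (M_1, M_2, M_3), \theta, \beta)$, I would localize at each prime $\pp$ of $R$. After trivializing $\aa_\pp$ and each $M_{i,\pp}$, the data reduces to a $2 \times 2 \times 2$ cube over the DVR $R_\pp$, to which the DVR analog of Bhargava's theorem associates a quadratic $R_\pp$-algebra $S_\pp$ and a balanced ideal triple. Canonicity of the construction ensures that the local pieces are compatible under the further localization maps $R_\pp \to R_\qq$ of nested localizations (and agree on the generic fiber $\Frac(R)$), so they glue to produce a quadratic $R$-algebra $S$, realized inside $S \tensor \Frac(R)$ as $\bigcap_\pp S_\pp$, together with ideals $I_i \subseteq S$ and the correct orientation.

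The main obstacle is verifying that the reconstructed objects satisfy all of the required algebraic axioms: $S$ is commutative and associative, each $I_i$ is a genuine $S$-submodule, and the triple is balanced with norm product $1$. These are all local properties in a strong sense---they hold if and only if they hold after every localization at a prime of $R$---so they reduce to the DVR case and thereby to Bhargava's original construction (where associativity, the hardest axiom, is classically verified via Cayley-hyperdeterminant identities). The same localization argument shows that the two constructions are mutually inverse. The principal new feature, absent from Bhargava's $\ZZ$-based picture, is the bookkeeping of ideal classes via $\aa$ and the $M_i$ together with the isomorphism $\theta$, which uniformly encodes both the orientation of $S$ and the norm relation, and which must be tracked consistently throughout the gluing.
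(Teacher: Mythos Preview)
Your forward construction agrees with the paper's: set $M_i = I_i$, $\aa = \Lambda^2 S$, and $\beta(x\tensor y\tensor z) = \alpha(1\wedge xyz)$, with $\theta$ coming from the norm-product condition. Good.

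The reverse direction is where your plan diverges from the paper and where the gaps lie. The paper does \emph{not} localize and glue; it gives a direct global construction. From $\beta$ it first extracts the three quadratic forms $\phi_i : M_i \to \aa^{-1}\Lambda^2 M_i$ (by slicing and taking $2\times 2$ determinants), then writes down explicit $\xi$-actions on each $M_i$ whose common characteristic polynomial specifies a single oriented quadratic ring $S$. The heart of the argument is an explicit formula for elements $\tau_{ijk} \in S$ that play the role of the products $\eta_{1i}\eta_{2j}\eta_{3k}$; one checks by hand that the resulting map $\tilde\beta : M_1 \tensor M_2 \tensor M_3 \to S$ is $S$-trilinear, which forces the embeddings $M_i \hookrightarrow S_K$ and the balancedness.

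Your localization strategy has a genuine gap at the step ``apply the DVR analog of Bhargava's theorem.'' Bhargava's Theorem~11 is proved over $\ZZ$, and his formulas for the $\tau_{ijk}$ carry a denominator of $2$. Over a DVR of residue characteristic~$2$ (or, worse, a Dedekind domain of characteristic~$2$), those formulas are meaningless, so ``modest adaptation'' understates the work: one must find denominator-free expressions for the $\tau_{ijk}$, which is exactly the technical contribution the paper supplies. Since the DVR case is itself a special case of the theorem you are proving, invoking it without proof is circular, and proving it is essentially the same computation the paper carries out globally.

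Even granting the DVR case, your gluing is underspecified. The local output is only an \emph{equivalence class} of balanced triples---the ideals $I_{i,\pp}$ are determined only up to scalars $\gamma_{i,\pp}\in S_K^\times$ with $\gamma_{1,\pp}\gamma_{2,\pp}\gamma_{3,\pp}=1$---so ``canonicity'' alone does not hand you a coherent global triple. What actually makes gluing work is first fixing the embeddings $M_i^K \hookrightarrow S_K$ over the generic fibre (using the \emph{field} case of the theorem) and then invoking the uniqueness clause of the DVR case to see that the $K$-choice and each $R_\pp$-choice differ by scalars with product~$1$, whence $I_1I_2I_3\subseteq S$ and the norm condition persist. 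You should say this explicitly; as written, your sentence ``canonicity of the construction ensures that the local pieces are compatible'' asserts the conclusion without supplying the mechanism.
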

In particular $R$ may have characteristic $2$, the frequent factors of $1/2$ in Bhargava's exposition notwithstanding, and by weakening the nondegeneracy condition, we are able to include balanced triples in degenerate rings.

The second main result of our paper is the parametrization of quartic rings (with the quadratic and cubic parametrizations as preliminary cases). A key insight is to parametrize not merely the quartic rings themselves, but the quartic rings together with their \emph{cubic resolvent} rings, a notion arising from the resolvent cubic used in the classical solution of the quartic by radicals.
\begin{thm}[\cite{B3}, Theorem 1 and Corollary 5]
There is a canonical bijection between
\begin{itemize}
  \item isomorphism classes of pairs $(Q,C)$ where $Q$ is a quartic ring (over $\ZZ$) and $C$ is a cubic resolvent ring of $Q$;
  \item quadratic maps $\phi : \ZZ^3 \to \ZZ^2$, up to linear changes of coordinates on both the input and the output.
\end{itemize}
Any quartic ring $Q$ has a cubic resolvent, and if $Q$ is Dedekind, the resolvent is unique.
\end{thm}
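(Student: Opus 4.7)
The plan is to construct the bijection explicitly in both directions, then handle the existence and uniqueness of the cubic resolvent as separate structural statements.

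\emph{Forward direction.} Given a pair $(Q,C)$, choose bases $1,\alpha_1,\alpha_2,\alpha_3$ of $Q$ and $1,\beta_1,\beta_2$ of $C$ so that $Q/\ZZ \cong \ZZ^3$ and $C/\ZZ \cong \ZZ^2$. The data that makes $C$ a cubic resolvent of $Q$ is essentially a natural quadratic map $Q/\ZZ \to C/\ZZ$ (morally $x \mapsto x^2$ followed by the resolvent projection); writing this in the chosen coordinates yields the quadratic map $\phi : \ZZ^3 \to \ZZ^2$. A change of basis acts through $\GL_3(\ZZ) \times \GL_2(\ZZ)$, so the equivalence class of $\phi$ is canonically determined.

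\emph{Reverse direction.} Given $\phi$, one writes down explicit polynomial formulas for the structure constants of $Q$ and $C$ (the products $\alpha_i \alpha_j$ and $\beta_k \beta_l$ modulo $\ZZ$) in terms of the coefficients of $\phi$, pinning down the lifts uniquely by a tracelessness normalization. The main obstacle is then to verify that these formulas give associative, commutative ring structures on each of $Q$ and $C$, and that $C$ really is a cubic resolvent of $Q$. This amounts to a finite list of polynomial identities in the coefficients of $\phi$, which I would verify on the generic fibre over $\QQ$: for a nondegenerate $\phi$, one obtains a quartic \'etale $\QQ$-algebra, whose canonical cubic resolvent is supplied by the action of $S_4$ on the three unordered pair-partitions of $\{1,2,3,4\}$. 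Since polynomial identities with integer coefficients are determined by their restriction to any Zariski-dense subset, they hold universally, and the construction is integral because the formulas themselves have integer coefficients.

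\emph{Existence and uniqueness of the resolvent.} For existence, given a quartic ring $Q$, apply the reverse construction to the $\phi$ extracted from the canonical resolvent of the \'etale $\QQ$-algebra $Q \otimes \QQ$, and verify that the resulting lattice inside the rational resolvent has integral multiplication compatible with $Q$. Uniqueness in the Dedekind case reduces by localization to a local statement at each prime: a Dedekind quartic order is $p$-maximal for every prime $p$, which rigidifies the resolvent lattice and forces it to be uniquely determined among $\ZZ_p$-lattices in $C \otimes \QQ$ satisfying the required integral compatibility with $Q$.
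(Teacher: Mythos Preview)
Your bijection sketch is broadly on track, and the Zariski-density argument for associativity is a legitimate alternative to what the paper does: the paper instead verifies associativity by hand, organizing the required identities as Pl\"ucker relations among the $2\times 2$ minors of the $2\times 6$ matrix of coefficients of $\phi$. Your route is slicker but less constructive; the paper's route yields explicit integral formulas for the $c_{ij}^0$ with no appeal to the \'etale locus.

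However, your existence argument for the cubic resolvent has a real gap. You propose to take the canonical resolvent of the \'etale $\QQ$-algebra $Q\otimes\QQ$ and then ``verify that the resulting lattice has integral multiplication.'' First, $Q\otimes\QQ$ need not be \'etale (e.g.\ $Q=\ZZ[\epsilon]/(\epsilon^2)\oplus\ZZ^2$), so the Galois-theoretic construction of the rational resolvent is unavailable in general. Second, and more seriously, even granting a rational resolvent map $\phi_\QQ:(Q/\ZZ)\otimes\QQ\to\QQ^2$, you have not specified \emph{which} $\ZZ$-lattice $M\subset\QQ^2$ to take. The obvious choice $M_0=\phi_\QQ(Q/\ZZ)$ satisfies integrality but in general fails the determinant condition $\Lambda^2 M\cong\Lambda^3(Q/\ZZ)$: one computes $\Lambda^2 M_0=\cc\cdot\Lambda^3(Q/\ZZ)$ where $\cc$ is the \emph{content} of $Q$ (the largest $n$ with $Q\cong\ZZ+nQ'$). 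The paper's argument is precisely to identify this discrepancy, and then to show that $M_0$ can be enlarged by index $\cc$ to a lattice $M$ on which $\theta$ becomes an isomorphism while $\phi$ still lands in $M$. That enlargement step is the actual content of the existence proof, and your sketch does not supply it.

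For uniqueness when $Q$ is Dedekind, your localization idea can be made to work, but the paper's argument is more direct: a maximal $Q$ has content $\cc=1$ (else $Q\subsetneq Q'$), so $M_0$ itself is already numerical and is the only lattice satisfying both conditions in the previous paragraph.
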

Our analogue is as follows:
\begin{thm}[see Theorems \ref{thm:quartic} and \ref{thm:cub ring struc} and Corollary \ref{cor:at least 1 res}]
Let $R$ be a Dedekind domain. There is a canonical bijection between
\begin{itemize}
  \item isomorphism classes of pairs $(Q,C)$ where $Q$ is a quartic ring (over $R$) and $C$ is a cubic resolvent ring of $Q$;
  \item quadruples $(L,M,\theta,\phi)$ where $L$ and $M$ are lattices of ranks $3$ and $2$ over $R$ respectively, $\theta : \Lambda^2 M \to \Lambda^3 L$ is an isomorphism, and $\phi : L \to M$ is a quadratic map.
\end{itemize}
Under this bijection, we get identifications $Q/R \cong L$ and $C/R \cong M$.

Any quartic ring $Q$ has a cubic resolvent, and if $Q$ is Dedekind, the resolvent is unique.
\end{thm}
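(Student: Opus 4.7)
The plan is to adapt Bhargava's construction to a module-theoretic setting over $R$, using a local-to-global strategy wherever Bhargava used explicit integer bases. Throughout, write $L = Q/R$ and $M = C/R$, which become projective $R$-modules of ranks $3$ and $2$ once one fixes splittings via the unit elements of $Q$ and $C$.

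For the forward direction, given $(Q,C)$, I would read off $\phi$ and $\theta$ directly from the resolvent axioms: by the definition of cubic resolvent introduced earlier in the paper, the structure of $C$ over $Q$ supplies a canonical quadratic map $Q/R \to C/R$ (morally capturing the quadratic symmetric functions of the four geometric points of $Q$), and the isomorphism $\theta : \Lambda^2 M \to \Lambda^3 L$ arises from the matching of discriminant ideals $\disc(Q) = \disc(C)$ built into the resolvent formalism. The reverse construction is the heart of the matter: given $(L, M, \theta, \phi)$, one must build $Q = R \oplus L$ and $C = R \oplus M$ as $R$-algebras. I would do this by localizing at each prime $\pp \subset R$, where $L_\pp$ and $M_\pp$ are free, and using Bhargava's explicit structure constants -- expressible as polynomials in the ten coefficients of $\phi$ -- to define $R_\pp$-algebra structures on $R_\pp \oplus L_\pp$ and $R_\pp \oplus M_\pp$. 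Because these local products are canonical (depending only on $\phi$ and $\theta$, and transforming correctly under change of basis), they patch to global $R$-algebra structures on $R \oplus L$ and $R \oplus M$.

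The main obstacle is verifying associativity and the resolvent compatibility at the global level. Bhargava reduced both to polynomial identities in the coefficients of $\phi$, checked by direct computation over $\ZZ$; being universal identities, they hold after any base change, so associativity over every $R_\pp$ implies associativity globally, since the structure tensor lies in a torsion-free (in fact projective) $R$-module and is determined by its localizations. The same reasoning handles the defining identities of a cubic resolvent. A functoriality check on both constructions then confirms they are mutually inverse up to canonical isomorphism.

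For the final assertions: existence of a cubic resolvent for an arbitrary quartic $R$-algebra $Q$ follows by the inverse bijection applied locally and patching, using that the module $M$ is pinned down (up to its determinant class, which is forced to be $\Lambda^3 L$ by $\theta$) by the local quadratic resolvent maps. Uniqueness when $Q$ is Dedekind follows by observing that $Q \otimes_R \Frac(R)$ is an étale $\Frac(R)$-algebra, for which the cubic resolvent is unique (being essentially determined by the Galois action on the four geometric embeddings); any two resolvents of $Q$ then sit as $R$-orders in this common $\Frac(R)$-algebra, and checking equality reduces to a local comparison at each prime of $R$, where $Q_\pp$ is an order in an étale algebra and the Dedekind hypothesis forces the resolvent to coincide with the integral closure prescribed by the generic fiber.
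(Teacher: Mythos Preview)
Your localization-and-patching strategy is genuinely different from the paper's, which never localizes: it works globally by decomposing $L = \aa_1\txi_1 \oplus \aa_2\txi_2 \oplus \aa_3\txi_3$ into fractional ideals, solves explicitly for the structure constants $c_{ij}^k$ from the values $\lambda^{ij}_{k\ell} = \theta(\mu_{ij}\wedge\mu_{k\ell})$, and verifies associativity directly via Pl\"ucker relations among the $\lambda$'s (plus a short trick to handle the constant terms $c_{ij}^0$). For the resolvent-to-ring direction your approach is fine---Bhargava's identities are indeed universal---but the paper's explicit global computation buys you formulas over $R$ itself rather than a sheaf-theoretic descent.

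The substantive gap is in your existence argument for the resolvent. ``The inverse bijection applied locally'' goes the wrong way: that bijection produces a quartic ring \emph{from} resolvent data, so even over a DVR you must \emph{construct} $(M,\theta,\phi)$ out of $Q$, which is precisely the hard step. Worse, when $\ct(Q)\neq R$ the local numerical resolvents are not unique, so there is nothing canonical to patch. The paper instead builds the minimal resolvent globally---first over $K$, reconstructing the $\mu_{ij}$ from the $\lambda$-system via the Pl\"ucker relations, then taking $M_0=\phi(Q/R)$---and shows that numerical resolvents are exactly the superlattices $M\supseteq M_0$ with $[M:M_0]=\cc$; existence of such a superlattice is a global lattice fact, not a local one.

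Your uniqueness argument for Dedekind $Q$ is also off the paper's track. The paper proves that numerical resolvents are parametrized by the content $\cc$, so uniqueness amounts to $\cc=R$; this holds for maximal $Q$ because $\cc\neq R$ would give $Q=R+\aa Q'$ with $Q\subsetneq Q'$. Your \'etale-plus-integral-closure route would need the extra (nontrivial) fact that the cubic resolvent of a maximal quartic ring is itself maximal. Finally, a small conceptual slip: $\theta$ is part of the resolvent \emph{data}, not something derived from a discriminant comparison---the discriminant equality is a consequence, not the source.
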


\subsection{Outline}
The remainder of this paper is structured as follows. In section \ref{sec:ded}, we set up basic definitions concerning projective modules over a Dedekind domain. In sections \ref{sec:quad} and \ref{sec:quadideals}, respectively, we generalize to Dedekind base rings two classical parametrizations, namely of quadratic algebras over $\ZZ$ and of their ideals. In section \ref{sec:box}, we prove Bhargava's parametrization of balanced ideal triples (itself a generalization of Gauss composition) over a Dedekind domain. In section \ref{sec:p-adic}, we work out in detail a specific example---unramified extensions of $\ZZ_p$---that allows us to explore the notion of balanced ideal triple in depth. In sections \ref{sec:cubic} and \ref{sec:quartic}, we tackle cubic and quartic algebras respectively, and in section \ref{sec:maximality}, we discuss results that would be useful when using the preceding theory to parametrize and count quartic field extensions.

\section{Modules and algebras over a Dedekind domain}
\label{sec:ded}

A \emph{Dedekind domain} is an integral domain that is Noetherian, integrally closed, and has the property that every nonzero prime ideal is maximal. The standard examples of Dedekind domains are the ring of algebraic integers $\OO_K$ in any finite extension $K$ of $\QQ$; in addition, any field and any principal ideal domain (PID), such as the ring $\CC[x]$ of polynomials in one variable, is Dedekind. In this section, we summarize properties of Dedekind domains that we will find useful; for more details, see \cite{Milnor}, pp.~9--18.

The salient properties of Dedekind domains were discovered through efforts to generalize prime factorization to rings beyond $\ZZ$; in particular, every nonzero ideal $\aa$ in a Dedekind domain $R$ is expressible as a product $\pp_1^{a_1}\cdots \pp_n^{a_n}$ of primes, unique up to ordering. Our motivation for using Dedekind domains stems from two other related properties. Recall that a \emph{fractional ideal} or simply an \emph{ideal} of $R$ is a finitely generated nonzero $R$-submodule of the fraction field $K$ of $R$, or equivalently, a set of the form $a\aa$ where $\aa \subseteq R$ is a nonzero ideal and $a \in K^\cross$. (The term ``ideal'' will from now on mean ``(nonzero) fractional ideal''; if we wish to speak of ideals in the ring-theoretic sense, we will use a phrasing such as ``ideal $\aa \subseteq R$.'') The first useful property is that any fractional ideal $\aa \subseteq K$ has an inverse $\aa^{-1}$ such that $\aa\aa^{-1} = R$. This allows us to form the group $I(R)$ of nonzero fractional ideals and quotient by the group $K^\cross/R^\cross$ of principal ideals to obtain the familiar \emph{ideal class group,} traditionally denoted $\Pic R$. (For the ring of integers in a number field, the class group is always finite; for a general Dedekind domain this may fail, e.g.~for the ring $\CC[x,y]/(y^2 - (x-a_1)(x-a_2)(x-a_3))$ of functions on a punctured elliptic curve.)

The second property that we will find very useful is that finitely generated modules over a Dedekind domain are classified by a simple theorem generalizing the classification of finitely generated abelian groups. For our purposes it suffices to discuss torsion-free modules, which we will call lattices.
\begin{defn}
Let $R$ be a Dedekind domain and $K$ its field of fractions. A \emph{lattice} over $R$ is a finitely generated, torsion-free $R$-module $M$. If $M$ is a lattice, we will denote by the subscript $M_K$ its $K$-span $M \tensor_R K$ (except when $M$ is denoted by a symbol containing a subscript, in which case a superscript will be used). The dimension of $M_K$ over $K$ is called the \emph{rank} of the lattice $M$.
\end{defn}
A lattice of rank $1$ is a nonzero finitely generated submodule of $K$, i.e.~an ideal; thus isomorphism classes of rank-$1$ lattices are parametrized by the class group $\Pic R$. The situation for general lattices is not too different.
\begin{thm}[see \cite{Milnor}, Lemma 1.5, Theorem 1.6, and the intervening Remark] \label{thm:lattices}
A lattice $M$ over $R$ is classified up to isomorphism by two invariants: its rank $m$ and its top exterior power $\Lambda^m M$. Equivalently, every lattice is a direct sum $\aa_1 \oplus \cdots \oplus \aa_m$ of nonzero ideals, and two such direct sums $\aa_1 \oplus \cdots \oplus \aa_m$, $\bb_1 \oplus \cdots \oplus \bb_n$ are isomorphic if and only if $m = n$ and the products $\aa_1\cdots\aa_m$ and $\bb_1\cdots\bb_n$ belong to the same ideal class.
\end{thm}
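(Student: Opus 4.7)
The plan is to break the statement into three pieces: (a) existence of a decomposition as a direct sum of ideals, (b) the identification of $\Lambda^m M$ with the product of those ideals, and (c) the converse that the rank and the ideal class of the top exterior power together determine $M$ up to isomorphism.

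For (a), I would induct on the rank $m$. The base case $m=1$ is immediate: any finitely generated torsion-free rank-$1$ $R$-module embeds in its one-dimensional $K$-span and is therefore a fractional ideal by definition. For the inductive step, given a lattice $M$ of rank $m\ge 2$, choose any nonzero $R$-linear map $f:M\to K$ (for instance a coordinate projection relative to some identification $M_K \cong K^m$). Its image is a finitely generated nonzero $R$-submodule of $K$, hence a fractional ideal $\aa$. Over a Dedekind domain every fractional ideal is invertible and therefore projective, so the short exact sequence
\[
  0 \longrightarrow \ker f \longrightarrow M \longrightarrow \aa \longrightarrow 0
\]
splits, giving $M \cong \aa \oplus \ker f$ with $\ker f$ torsion-free of rank $m-1$, and induction finishes the job.

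For (b), I would apply the standard formula $\Lambda^m(N_1\oplus\cdots\oplus N_m) \cong N_1\otimes\cdots\otimes N_m$ to the decomposition from (a), together with the fact that the tensor product of invertible $R$-modules represents the product in $\Pic R$. This yields $\Lambda^m M \cong \aa_1\cdots\aa_m$, showing that the ideal class on the right is an invariant of $M$.

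For (c) the key input is the \emph{Steinitz lemma}: for any two ideals $\aa,\bb$ of $R$ there is an isomorphism
\[
  \aa \oplus \bb \;\cong\; R \oplus \aa\bb.
\]
Granting this, an easy induction on $m$ rewrites every direct sum $\aa_1\oplus\cdots\oplus\aa_m$ as $R^{m-1}\oplus \aa_1\cdots\aa_m$, so two lattices of the same rank whose top exterior powers agree in $\Pic R$ are isomorphic. To prove the Steinitz lemma I would use strong approximation in the Dedekind domain $R$ to replace $\aa$ by an isomorphic ideal $\aa'$ coprime to $\bb$; then the surjection $\aa'\oplus\bb \to R$, $(x,y)\mapsto x+y$, has kernel $\{(x,-x): x\in \aa'\cap\bb\} \cong \aa'\bb$, and since $R$ is projective this sequence splits.

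The main obstacle is the Steinitz lemma, and specifically the approximation step that produces a representative of a given ideal class coprime to a prescribed ideal. This is where the Dedekind hypothesis is genuinely used, rather than as formal module theory: the existence, exterior-power computation, and splitting arguments above only rely on projectivity of ideals, but reducing two arbitrary ideals to the normal form $R \oplus \aa\bb$ demands the finer arithmetic of $R$.
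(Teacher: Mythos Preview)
Your argument is correct and is the classical Steinitz proof of the structure theorem for finitely generated torsion-free modules over a Dedekind domain. There is nothing to compare against in the paper itself: the paper does not prove this theorem but simply cites Milnor (Lemma~1.5, Theorem~1.6, and the intervening Remark), treating the result as background. The approach in Milnor is essentially the one you outline---split off an ideal summand using projectivity, compute the top exterior power as the product of the ideal factors, and normalize via $\aa\oplus\bb \cong R\oplus\aa\bb$---so your proposal matches the cited reference as well.
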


In this paper we will frequently be performing multilinear operations on lattices. Using Theorem \ref{thm:lattices}, it is easy to show that these operations behave much more ``tamely'' than for modules over general rings. Specifically, for two lattices $M = \aa_1u_1 \oplus \cdots \oplus \aa_mu_m$ and $N = \bb_1v_1 \oplus \cdots \oplus \bb_nv_n$, we can form the following lattices:
\begin{itemize}
  \item the tensor product
  \[
    M \tensor N = \bigoplus_{\substack{1 \leq i \leq m \\ 1 \leq j \leq n}}
      \aa_i\bb_j (u_i \tensor v_j);
  \]
  \item the symmetric powers
  \[
    \Sym^k M = \bigoplus_{1 \leq i_1 \leq \cdots \leq i_k \leq m} \aa_{i_1} \cdots \aa_{i_k}(u_{i_1} \tensor \cdots \tensor u_{i_k})
  \]
  and the exterior powers
  \[
    \Lambda^k M = \bigoplus_{1 \leq i_1 < \cdots < i_k \leq m} \aa_{i_1} \cdots \aa_{i_k}(u_{i_1} \wedge \cdots \wedge u_{i_k})
  \]
  of ranks $\binom{n+k-1}{k}$ and $\binom{n}{k}$ respectively;
  \item the dual lattice
  \[
    M^* = \Hom(M, R) = \bigoplus_{1 \leq i \leq m} \aa_i^{-1} u_i^*;
  \]
  \item and the space of homomorphisms
  \[
    \Hom(M, N) \cong M^* \tensor N = \bigoplus_{\substack{1 \leq i \leq m \\ 1 \leq j \leq n}}
      \aa_i^{-1}\bb_j (u_i^* \tensor v_j).
  \]  
\end{itemize}
A particular composition of three of these constructions is of especial relevance to the present thesis:
\begin{defn}
If $M$ and $N$ are lattices (or, more generally, $M$ is a lattice and $N$ is any $R$-module), then a degree-$k$ \emph{map} $\phi : M \to N$ is an element of $(\Sym^k M^*) \otimes N$. A map to a lattice $N$ of rank $1$ is called a \emph{form}.
\end{defn}
In terms of decompositions $M = \aa_1u_1 \oplus \cdots \oplus \aa_mu_m$ and $N = \bb_1v_1 \oplus \cdots \oplus \bb_nv_n$, a degree-$k$ map can be written in the form
\[
  \phi(x_1u_1 + \cdots + x_mu_m) = \sum_{j=1}^n \sum_{i_1 + \cdots + i_m = k} a_{i_1,\ldots,i_m,j} \cdot x_1^{i_1} \cdots x_m^{i_m} v_j,
\]
where the coefficients $a_{i_1,\ldots,i_m,j}$ belong to the ideals $\aa_1^{-i_1}\cdots \aa_m^{-i_m} \bb_j$ needed to make each term's value belong to $N$. For example, over $R = \ZZ$, a quadratic map from $\ZZ^2$ to $\ZZ$ is a quadratic expression
\[
  \phi(x,y) = ax^2 + bxy + cy^2
\]
in the coordinates $x,y \in (\ZZ^2)^*$ on $\ZZ^2$. Two caveats about this notion are in order:
\begin{itemize}
  \item Although such a degree-$k$ map indeed yields a function from $M$ to $N$ (evaluated by replacing every functional in $M^*$ appearing in the map by its value on the given element of $M$), it need not be unambiguously determined by this function if $R$ is finite. For instance, if $R = \FF_2$ is the field with two elements, the cubic map from $\FF_2^2$ to $\FF_2$ defined by $\phi(x,y) = xy(x+y)$ vanishes on each of the four elements of $\FF_2^2$, though it is not the zero map.
  \item Also, one must not confuse $(\Sym^k M^*) \otimes N$ with the space $(\Sym^k M)^* \otimes N$ of symmetric $k$-ary multilinear functions from $M$ to $N$. Although both lattices have rank $n\binom{m+k-1}{k}$ and there is a natural map from one to the other (defined by evaluating a multilinear function on the diagonal), this map is not in general an isomorphism. For instance, the quadratic forms $\phi : \ZZ^2 \to \ZZ$ arising from a symmetric bilinear form $\lambda((x_1,y_1),(x_2,y_2)) = ax_1x_2 + b(x_1y_2 + x_2y_1) + cy_1y_2$ are exactly those of the form $\phi(x,y) = ax^2 + 2bxy + cy^2$, with middle coefficient even.
\end{itemize}

\begin{defn}\label{defn:image}
  The \emph{image} $\phi(M)$ of a degree-$k$ map $\phi : M \to N$ is the smallest sublattice $N' \subseteq N$ such that $\phi$ is a degree-$k$ map from $M$ to $N'$, i.e.~lies in the image of the natural injection $(\Sym^k M^*) \otimes N' \hookrightarrow (\Sym^k M^*) \otimes N$. It may be computed as follows: if
\[
  \phi(x_1u_1 + \cdots + x_mu_m) = \sum_{i_1 + \cdots + i_m = k} x_1^{i_1} \cdots x_m^{i_m} \cdot v_{i_1,\ldots,i_m},
\]
then $\phi(M)$ is the $R$-span of all the $1$-dimensional sublattices $\aa_1^{i_1}\cdots \aa_m^{i_m} v_{i_1,\ldots,i_m}$ in $N$. (It is \emph{not} the same as the span of the values of $\phi$ as a function on $M$.)
\end{defn}

\begin{defn}
  If $L \subseteq M$ are two lattices of rank $n$, the \emph{index} $[M:L]$ is the ideal $\aa$ such that
\[
  \aa \cdot \Lambda^n L = \Lambda^n M.
\]
Since $\Lambda^n L$ and $\Lambda^n M$ are of rank $1$, this is well defined.
\end{defn}

\subsection{Algebras}
An \emph{algebra of rank $n$} over $R$ is a lattice $S$ of rank $n$ equipped with a multiplication operation giving it the structure of a (unital commutative associative) $R$-algebra. Since $R$ is integrally closed, the sublattice generated by $1\in S$ must be primitive (that is, the lattice it generates is maximal for its dimension, and therefore a direct summand of $S$), implying that the quotient $S/R$ is a lattice of rank $n-1$ and we have a noncanonical decomposition
\begin{equation} \label{eq:alg decomp}
  S = R \oplus S/R.
\end{equation}
We will be concerned with algebras of ranks $2$, $3$, and $4$, which we call quadratic, cubic, and quartic algebras (or rings) respectively.

\subsection{Orientations}
When learning about Gauss composition over $\ZZ$, one must sooner or later come to a problem that vexed Legendre (see \cite{potf}, p.~42): If one considers quadratic forms up to $\GL_2 \ZZ$-changes of variables, then a group structure does not emerge because the conjugate forms $ax^2 \pm bxy + cy^2$, which ought to be inverses, have been identified. Gauss's insight was to consider forms only up to ``proper equivalence,'' i.e.~$\SL_2 \ZZ$ coordinate changes. This is tantamount to considering quadratic forms not simply on a rank-$2$ $\ZZ$-lattice $M$, but on a rank-$2$ $\ZZ$-lattice equipped with a distinguished generator of its top exterior power $\Lambda^2 M$. For general lattices over Dedekind domains, whose top exterior powers need not belong to the principal ideal class, we make the following definitions.
\begin{defn}
Let $\aa$ be a fractional ideal of $R$. A rank-$n$ lattice $M$ is of \emph{type $\aa$} if its top exterior power $\Lambda^n M$ is isomorphic to $\aa$; an \emph{orientation} on $M$ is then a choice of isomorphism $\alpha : \Lambda^n M \to \aa$. The possible orientations on any lattice $M$ are of course in noncanonical bijection with the units $R^\cross$. The easiest way to specify an orientation on $M$ is to choose a decomposition $M = \bb_1u_1 \oplus \cdots \oplus \bb_nu_n$, where the ideals $\bb_i$ are scaled to have product $\aa$, and then declare
\[
  \alpha(y_1u_1 \wedge \cdots \wedge y_nu_n) = y_1\cdots y_n.
\]
An orientation on a rank-$n$ $R$-algebra $S$ is the same as an orientation on the lattice $S$, or equivalently on the lattice $S/R$, due to the isomorphism between $\Lambda^n S$ and $\Lambda^{n-1} S/R$ given by
\[
  1 \wedge v_1 \wedge \cdots \wedge v_{n-1} \mapsto \tilde{v}_1 \wedge \cdots \wedge \tilde{v}_{n-1}.
\]
(Here, and henceforth, we use a tilde to denote the image under the quotient map by $R$, so that the customary bar can be reserved for conjugation involutions. This is opposite to the usual convention where $\tilde{v}$ denotes a lift of $v$ under a quotient map.)
\end{defn}

\section{Quadratic algebras}
\label{sec:quad}
Before proceeding to Bhargava's results, we lay down as groundwork two parametri\-zations that, over $\ZZ$, were known classically. These are the parametrizations of quadratic algebras and of ideal classes in quadratic algebras. The extension of these to other base rings has been thought about extensively, with many different kinds of results produced (see \cite{WGauss} and the references therein). Here, we prove versions over a Dedekind domain that parallel our cubic and quartic results.

Let $S$ be a quadratic algebra over $R$. Since $S/R$ has rank $1$, the decomposition \eqref{eq:alg decomp} simplifies to $S = R \oplus \aa\xi$ for an (arbitrary) ideal $\aa$ in the class of $\Lambda^2 S$ and some formal generator $\xi \in S_K$. The algebra is then determined by $\aa$ and a multiplication law $\xi^2 = t\xi - u$, which allows us to describe the ring as $R[\aa\xi]/(\aa^2(\xi^2 - t\xi + u))$, a subring of $K[\xi]/(\xi^2 - t\xi + u)$. Alternatively, we can associate to the ring its norm map
\[
  N_{S/R} : S \to R, \quad x + y\xi \mapsto x^2 + t x y + u y^2.
\]
It is evident that this is just another way of packaging the same data, namely two numbers $t \in \aa^{-1}$ and $u \in \aa^{-2}$. The norm map is more readily freed from coordinates than the multiplication table, yielding the following parametrization.
\begin{lem}
Quadratic algebras over $R$ are in canonical bijection with rank-$2$ $R$-lattices $M$ equipped with a distinguished copy of $R$ and a quadratic form $\phi : M \to R$ that acts as squaring on the distinguished copy of $R$.
\end{lem}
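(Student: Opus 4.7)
The forward direction is immediate: I would send a quadratic algebra $S$ to the triple $(S, R \hookrightarrow S, N_{S/R})$, and note that since multiplication by $r \in R$ acts as scalar multiplication on the rank-$2$ module $S$, its determinant is $r^2$, confirming that $\phi = N_{S/R}$ restricts to squaring on the distinguished copy of $R$.

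For the reverse construction, given $(M, R \hookrightarrow M, \phi)$, my plan is to build the algebra structure first over the fraction field $K$ using a Cayley--Hamilton recipe, then verify integrality locally. First, define the trace form $\mathrm{Tr}\colon M \to R$ by polarization with $1$, namely $\mathrm{Tr}(v) = \phi(1+v) - 1 - \phi(v)$; this is $R$-linear and satisfies $\mathrm{Tr}(r) = 2r$. Then choose any $\xi \in M_K$ with $M_K = K \oplus K\xi$ and set $t = \mathrm{Tr}(\xi)$, $u = \phi(\xi)$, so that expanding $\phi$ bilinearly gives $\phi(x + y\xi) = x^2 + txy + uy^2$. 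Declaring $\xi^2 = t\xi - u$, a routine calculation shows that this extends to a commutative, associative, unital $K$-algebra structure on $M_K$ whose norm form is $\phi_K$, and that the construction is independent of the choice of $\xi$: a shift $\xi \mapsto \xi + r$ sends $(t, u)$ to $(t + 2r, u + tr + r^2)$, and a rescaling $\xi \mapsto c\xi$ sends $(t, u)$ to $(ct, c^2 u)$, each compatibly with the change of basis.

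To check that $M \cdot M \subseteq M$, I would work locally at each prime $\pp \subset R$, using that $M = \bigcap_\pp M_\pp$ inside $M_K$. There $R_\pp$ is a DVR, the torsion-free rank-$1$ quotient $M_\pp / R_\pp$ is free, and we may split $M_\pp = R_\pp \oplus R_\pp \xi$. The scalars $t = \mathrm{Tr}(\xi)$ and $u = \phi(\xi)$ both lie in $R_\pp$ because $\phi$ maps $M_\pp$ into $R_\pp$, so $\xi^2 = t\xi - u \in M_\pp$, and bilinearity gives the claim. Finally, the two constructions are mutually inverse because Cayley--Hamilton uniquely characterizes the multiplication of a quadratic algebra from its norm form, and the reconstructed norm form agrees with $\phi$ by direct computation.

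The main obstacle I anticipate is the characteristic-$2$ case: the standard polarization identity $vw = \tfrac{1}{2}\bigl((v+w)^2 - v^2 - w^2\bigr)$ is unavailable, and the symmetric bilinear form associated to $\phi$ loses information (for instance $B(v,v) = 2\phi(v) = 0$). The remedy is to parametrize the algebra via the unbiased scalars $(t, u) = (\mathrm{Tr}(\xi), \phi(\xi))$ and the explicit rule $\xi^2 = t\xi - u$, which is valid in any characteristic; splitting-independence then becomes the direct calculation above rather than a coordinate-free appeal to polarization.
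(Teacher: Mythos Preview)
Your plan is correct and follows the same core idea as the paper: read off the pair $(t,u)$ from the coefficients of $\phi$ in a splitting and impose the relation $\xi^2 = t\xi - u$. The execution differs in one respect. The paper uses the global splitting $M = R \oplus \aa\xi$ available over a Dedekind domain (since $R$ is primitive in $M$, the quotient $M/R$ is a rank-$1$ lattice and the extension splits by projectivity), so that $t \in \aa^{-1}$ and $u \in \aa^{-2}$ fall out immediately and integrality of the multiplication is automatic. You instead build the algebra over $K$, verify independence of the chosen $\xi$ by the explicit transformation rules, and then check $M \cdot M \subseteq M$ prime by prime. Both routes work; the paper's is shorter once the splitting is in hand, while yours is more self-contained and makes the characteristic-$2$ issue fully transparent.

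One small point you should fill in: when you assert that $M_\pp/R_\pp$ is torsion-free (so that $M_\pp = R_\pp \oplus R_\pp\xi$), this is exactly where the hypothesis that $\phi$ restricts to squaring on $R$ is used. If $1 = \pi v$ for some $v \in M_\pp$, then $\pi^2 \phi(v) = \phi(1) = 1$ forces $\pi$ to be a unit. The paper makes this observation explicitly (``the distinguished copy of $R$ must be primitive, otherwise $\phi$ would take values outside $R$''), and your local argument needs it just as much.
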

\begin{proof}
Given $M$ and $\phi$, the distinguished copy of $R$ must be primitive (otherwise $\phi$ would take values outside $R$), yielding a decomposition $M = R \oplus \aa\xi$. Write $\phi$ in these coordinates as
\[
  \phi(x + y\xi) = x^2 + t x y + u y^2;
\]
then the values $t \in \aa^{-1}$ and $u \in \aa^{-2}$ can be used to build a multiplication table on $M$ having the desired norm form (which is unique, as for any fixed coordinate system, the norm form determines $t$ and $u$, which determine the multiplication table).
\end{proof}
If there is a second copy of $R$ on which $N_{S/R}$ restricts to the squaring map, it must be generated by a unit of $S$ with norm $1$, multiplication by which induces an automorphism of the lattice with norm form. Hence we can eliminate the distinguished copy of $R$ and arrive at the following arguably prettier parametrization:
\begin{thm}\label{thm:nice ring}
Quadratic algebras over $R$ are in canonical bijection with rank-$2$ $R$-lattices $M$ equipped with a quadratic form $\phi : M \to R$ attaining the value $1$.
\end{thm}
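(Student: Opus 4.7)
The plan is to deduce this theorem from the preceding lemma by forgetting the distinguished copy of $R$. The forward map will send a quadratic algebra $S$ to the pair $(S, N_{S/R})$; since $N_{S/R}(1) = 1$, the value $1$ is attained. For the backward direction, given $(M, \phi)$ together with any $e \in M$ satisfying $\phi(e) = 1$, I take $Re$ to be the distinguished copy of $R$. The sublattice $Re$ is primitive, for if $e = cf$ with $c \in R$ a non-unit, then $1 = \phi(e) = c^2 \phi(f)$ with $\phi(f) \in R$ would force $c$ to be a unit. Moreover $\phi(re) = r^2$, so the preceding lemma produces a quadratic algebra $S_e$ with underlying lattice $M$, norm form $\phi$, and identity $e$.

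The heart of the proof is to check that $S_e$ is canonically independent of the choice of $e$, or more generally, that any isomorphism of pairs $\alpha : (M, \phi) \to (M', \phi')$ lifts to an isomorphism of the associated algebras $S_e \cong S'_{e'}$. The element $\alpha(e) \in M'$ satisfies $\phi'(\alpha(e)) = \phi(e) = 1$, so $\alpha(e)$ is a unit in $S'_{e'}$. Then multiplication by $u := e' \cdot \alpha(e)^{-1}$ inside $S'_{e'}$ is an $R$-linear automorphism of $M'$ that preserves $\phi'$ (by multiplicativity of the norm, since $\phi'(u) = 1$) and carries $\alpha(e)$ to $e'$. Composing $\alpha$ with this map yields an isomorphism $(M, Re, \phi) \to (M', Re', \phi')$ of triples in the sense of the preceding lemma, hence the desired ring isomorphism.

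Finally, that the two constructions are mutually inverse on isomorphism classes is routine: starting from $S$ and choosing $e = 1_S$ recovers $S$ exactly, and starting from $(M, \phi)$ the norm form of $S_e$ is $\phi$ by construction. I expect the main obstacle to be the bookkeeping in the middle paragraph---in particular, confirming that the translated map preserves both the quadratic form and the distinguished copy of $R$ simultaneously, so that we genuinely obtain a well-defined bijection on isomorphism classes rather than just a well-defined forward functor. This hinges on the multiplicativity of $\phi'$, which should follow immediately from the explicit expression $\phi'(x + y\xi') = x^2 + txy + uy^2$ supplied by the preceding lemma together with the fact that $\phi'$ equals $N_{S'/R}$.
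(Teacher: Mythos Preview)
Your proposal is correct and follows essentially the same approach as the paper: deduce the theorem from the preceding lemma by showing that any two choices of a distinguished copy of $R$ (equivalently, any two elements where $\phi$ takes the value $1$) are related by multiplication by a norm-$1$ unit, which is an automorphism of the lattice-with-form. The paper dispatches this in a single sentence before the theorem statement, while you spell out the bookkeeping (primitivity of $Re$, lifting isomorphisms of pairs to isomorphisms of triples) more explicitly; the substance is the same.
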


For our applications to Gauss composition it will also be helpful to have a parametrization of \emph{oriented} quadratic algebras. An orientation $\alpha: \Lambda^2 S \to \aa$ can be specified by choosing an element $\xi$ with $\alpha(1 \wedge \xi) = 1$. Since $\xi$ is unique up to translation by $\aa^{-1}$, the parametrization is exceedingly simple.
\begin{thm}
For each ideal $\aa$ of $R$, there is a canonical bijection between oriented quadratic algebras of type $\aa$ and pairs $(t,u)$, where $t \in \aa^{-1}$, $u \in \aa^{-2}$, up to the action of $\aa^{-1}$ via
\[
  s.(t,u) = (t + 2s, u + st + s^2)
\]
\end{thm}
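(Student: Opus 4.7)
The plan is to produce the bijection by carefully tracking a single choice: that of a ``formal generator'' $\xi \in S_K$ for which $S = R \oplus \aa\xi$ and the orientation takes the prescribed value on $1 \wedge \xi$.

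First, the forward map. Starting with an oriented quadratic algebra $(S,\alpha)$ of type $\aa$, I invoke the decomposition $S = R \oplus \aa\xi$ from the opening of the section. Extending $\alpha$ $K$-linearly to $\Lambda^2 S_K$, the image $\alpha(1 \wedge \xi)$ lies in $K^\times$; rescaling $\xi$ by an element of $K^\times$ (which preserves the form of the decomposition, since $\aa$ only matters up to class), I may arrange $\alpha(1 \wedge \xi) = 1$. Any other choice $\xi'$ satisfying the same two conditions differs from $\xi$ by a translation by some $s \in \aa^{-1}$ (this is exactly the observation the excerpt makes just before the theorem), since $s \in K$ contributes $1 \wedge s = 0$ to the exterior product. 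Now expanding $\xi^2 = t\xi - u$ in $S_K$, the requirement $(\aa\xi)(\aa\xi) \subseteq S = R \oplus \aa\xi$ forces $t \in \aa^{-1}$ and $u \in \aa^{-2}$. Send $(S,\alpha)$ to the class of $(t,u)$.

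Second, the action. A straightforward expansion of $(\xi + s)^2 = t'(\xi+s) - u'$ using $\xi^2 = t\xi - u$ yields $t' = t + 2s$ and $u' = u + st + s^2$, matching exactly the prescribed $\aa^{-1}$-action; so the orbit of $(t,u)$ is a genuine invariant of $(S,\alpha)$.

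Third, the inverse map. Given $(t,u) \in \aa^{-1} \times \aa^{-2}$, I form the rank-$2$ $R$-lattice $S = R \oplus \aa\xi$ sitting inside the $K$-algebra $K[\xi]/(\xi^2 - t\xi + u)$, inheriting commutativity, associativity, and the unit from the ambient algebra; the size constraints on $t,u$ are precisely what is needed to ensure that $\aa^2 \xi^2 = \aa^2 t\xi - \aa^2 u \subseteq \aa\xi + R$, i.e.\ that $S$ is closed under multiplication. The orientation is then defined by $\alpha(1 \wedge \xi) = 1$. This construction is inverse to the forward one essentially by definition, and any isomorphism of oriented quadratic algebras corresponding to $(t,u)$ and $(t',u')$ must carry a normalized generator of one to a normalized generator of the other, hence correspond to a translation by some $s \in \aa^{-1}$.

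I expect no deep obstacle: once the dependence of $\xi$ on the orientation is pinned down (the one subtle step, already handled in the excerpt's preamble), the remainder is a bookkeeping exercise in the ambient $K$-algebra $K[\xi]/(\xi^2 - t\xi + u)$. The only point that requires a moment of thought is verifying that in the inverse direction the resulting $R$-module really is a subalgebra, but that follows immediately from the containments $\aa^2 t \subseteq \aa$ and $\aa^2 u \subseteq R$.
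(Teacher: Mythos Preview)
Your argument is correct and is exactly the approach the paper takes; indeed, the paper gives no separate proof of this theorem, relying entirely on the two sentences preceding it (that specifying an orientation amounts to picking $\xi$ with $\alpha(1\wedge\xi)=1$, unique up to translation by $\aa^{-1}$) together with the earlier multiplication-table discussion. You have simply written out in full what the paper leaves implicit. One small imprecision: when you rescale $\xi$ to achieve $\alpha(1\wedge\xi)=1$, the scaling factor is forced to lie in $R^\times$ (not merely $K^\times$), since $\alpha$ already carries $\aa(1\wedge\xi_0)$ isomorphically onto $\aa$; so $\aa$ is preserved on the nose, not just up to class.
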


One other fact that will occasionally be useful is that every quadratic algebra has an involutory automorphism defined by $\bar{x} = \tr x - x$ or, in a coordinate representation
\[
  S = R[\aa\xi]/(\aa^2(\xi^2 - t\xi + u)),
\]
by $\xi \mapsto t - \xi$. (The first of these characterizations shows that the automorphism is well-defined, the second that it respects the ring structure.)
\begin{examp}
When $R = \QQ$ (or more generally any Dedekind domain in which $2$ is a unit), then completing the square shows that oriented quadratic algebras are in bijection with the forms $x^2 - ky^2$, $k \in \QQ$, each of which yields an algebra $S = \QQ[\sqrt{k}]$ oriented by $\alpha(1 \wedge \sqrt{k}) = 1$.

If we pass to \emph{unoriented} extensions, then we identify $\QQ[\sqrt{k}]$ with its rescalings $\QQ[f\sqrt{k}] \cong \QQ[\sqrt{f^2k}]$, $f \in \QQ^\cross$. The resulting orbit space $\QQ/(\QQ^\cross)^2$ parametrizes quadratic number fields, plus the two nondomains
\[
  \QQ[\sqrt{0}] = \QQ[\epsilon]/(\epsilon^2) \textand \QQ[\sqrt{1}] \cong \QQ \oplus \QQ.
\]
\end{examp}

\begin{examp}
When $R = \ZZ$, we can almost complete the square, putting a general $x^2 + txy + uy^2$ in the form
\[
  x^2 - \frac{D}{4} y^2 \textor x^2 + xy - \frac{D-1}{4} y^2.
\]
Here $D = t^2 - 4u$ is the \emph{discriminant}, the standard invariant used in \cite{B1} to parametrize oriented quadratic rings. It takes on all values congruent to $0$ or $1$ mod $4$. It also parametrizes \emph{unoriented} quadratic rings, since each such ring has just two orientations which are conjugate under the ring's conjugation automorphism. The rings of integers of number fields are then parametrized by the \emph{fundamental discriminants} which are not a square multiple of another discriminant, with the exception of $0$ and $1$ which parametrize $\ZZ[\epsilon]/\epsilon^2$ and $\ZZ \oplus \ZZ$ respectively.
\end{examp}

\begin{examp}
For an example where discriminant-based parametrizations are inapplicable, consider the field $R = \FF_2$ of two elements. Any nonzero quadratic form attains the value $1$, and there are three such, namely
\[
  x^2, \quad xy, \textand x^2 + xy + y^2.
\]
They correspond to the three quadratic algebras over $\FF_2$, respectively $\FF_2[\epsilon]/\epsilon^2$, $\FF_2 \oplus \FF_2$, and $\FF_4$.
\end{examp}

\section{Ideal classes of quadratic algebras}
\label{sec:quadideals}
We can now parametrize ideal classes of quadratic algebras, in a way that partially overlaps \cite{WGauss}. To be absolutely unambiguous, we make the following definition for quadratic algebras that need not be domains:
\begin{defn}
Let $S$ be a quadratic algebra over $R$. A \emph{fractional ideal} (or just an \emph{ideal}) of $S$ is a finitely generated $S$-submodule of $S_K$ \textbf{that spans $S_K$ over} $K$. Two fractional ideals are considered to belong to the same \emph{ideal class} if one is a scaling of the other by a scalar $\gamma \in S_K^\cross$. (This is clearly an equivalence relation.) The ideal classes together with the operation induced by ideal multiplication form the \emph{ideal class semigroup}, and the invertible ideal classes form the \emph{ideal class group} $\Pic S$.
\end{defn}
The condition in bold means that, for instance, the submodule $R \oplus \{0\} \subseteq R \oplus R$ is not a fractional ideal. Of course, any ideal that is invertible automatically satisfies it.
\begin{thm}[cf.~\cite{WGauss}, Corollary 4.2] \label{thm:quadideals}
For each ideal $\aa$ of $R$, there is a bijection between
\begin{itemize}
  \item ideal classes of oriented quadratic rings of type $\aa$, and
  \item rank-$2$ lattices $M$ equipped with a nonzero quadratic map $\phi : M \to \aa^{-1} \cdot \Lambda^2 M$.
\end{itemize}
In this bijection, the ideal classes that are invertible correspond exactly to the forms that are \emph{primitive,} that is, do not factor through any proper sublattice of $\aa^{-1} \cdot \Lambda^2 M$.
\end{thm}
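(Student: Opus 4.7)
The plan is to define a forward map from ideal classes to forms using the norm, define a backward map recovering both the ring and the ideal structure from the form, and check these are mutually inverse. For the forward direction, given an ideal class $[I]$ in an oriented quadratic algebra $(S,\alpha)$ of type $\aa$, I would set $M := I$ as an $R$-lattice and $\phi := N_{S/R}|_I$, the restricted norm form. The orientation extends to $\alpha_K : \Lambda^2 S_K \to K$, identifying $\Lambda^2 I$ with a fractional ideal of $R$. Checking that $\phi$ lands in $\aa^{-1}\Lambda^2 I$ would then be a direct local calculation, using that $I$ is closed under $S$-multiplication; and the construction descends to ideal classes because scaling $I$ by $\gamma \in S_K^{\times}$ is an $R$-linear isomorphism under which both $\Lambda^2 I$ and $N$ scale by $N(\gamma)$.

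For the backward direction, I would first extract the ring $S$ from $\phi$, then equip $M$ with an $S$-module structure. The polar $B(x,y) = \phi(x+y) - \phi(x) - \phi(y)$ is a symmetric bilinear map $M \otimes M \to \aa^{-1}\Lambda^2 M$; using the rank-$2$ duality $M^{*} \otimes \Lambda^2 M \cong M$, it determines an $R$-linear map $\tilde B : M \to \aa^{-1} M$ playing the role of multiplication by $\eta - \bar\eta$. Working locally in a decomposition $M = \bb_1 v_1 \oplus \bb_2 v_2$ with $\phi = a x^2 + b xy + c y^2$, the coefficients read off the ring invariants $t := b$ and $u := ac$, determining a unique oriented quadratic algebra $S$ of type $\aa$ modulo the $\aa^{-1}$-translation equivalence of the previous section. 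I would define the $S$-module structure on $M$ by $\eta \cdot v_1 := b v_1 + a v_2$ and $\eta \cdot v_2 := -c v_1$ in each local patch, check that $\eta^2 = t\eta - u$ holds on $M$ as an explicit matrix identity, and verify that these local definitions glue across change of trivialization.

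Having the two constructions in hand, I would verify they are mutually inverse: the forward of the backward returns $\phi$ by construction, and the backward of $(I, N|_I)$ returns $I$ because $S$ is determined by its norm form (Theorem~\ref{thm:nice ring}) and the $S$-action on $I$ is determined by $\phi$ via the identifications above. For the invertibility--primitivity claim, I would use that an $S$-ideal $I$ is invertible iff $I\bar I = N(I)\, S$ for some invertible fractional $R$-ideal $N(I)$, with $N(I) = \aa^{-1}\Lambda^2 I$; primitivity of $\phi$ (the values of $\phi$ generate $\aa^{-1}\Lambda^2 M$) corresponds exactly to the norms of elements of $I$ generating $N(I)$.

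The main obstacle will be giving a clean, characteristic-independent backward construction, since the naive formula $\eta = (\tilde B + t)/2$ fails when $2$ is not invertible in $R$. My approach will be to define $(t,u)$ and the $\eta$-action locally and then verify that the definitions are independent of the choice of local trivialization, so that they glue to a global $S$-action. A related subtlety is handling non-invertible ideal classes and non-primitive forms: in the class semigroup not every ideal has an inverse, so the backward map must rely on direct multiplication tables rather than on invertibility arguments, and one must confirm that the resulting $S$-module $M$ is genuinely a fractional ideal (spanning $S_K$ over $K$) rather than a strict sub-$S$-module of $S_K$.
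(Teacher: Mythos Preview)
Your architecture matches the paper's: forward via the norm (equivalently $\omega\mapsto\omega\wedge\xi\omega$ once one unwinds the orientation), backward by reading a $\xi$-action on $M$ from the coefficients of $\phi$ and invoking Cayley--Hamilton to recover the ring. Two of your steps, however, are not yet proofs.

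First, producing an $S$-module structure on $M$ is not the same as producing a fractional ideal: you must show $M_K\cong S_K$ as $S_K$-modules so that $M$ actually embeds in $S_K$. You flag this as a subtlety but give no argument. The paper handles it by a case split on $S_K$ (a field, $K\times K$, or $K[\epsilon]/\epsilon^2$); in the split and nilpotent cases there exist two-dimensional $S_K$-modules on which every element of $S_K$ acts as a scalar, and it is precisely the hypothesis $\phi\neq 0$ that rules those out. Without this step you do not have a map back to ideal classes. (Incidentally, with your specific formulas one computes $\omega\wedge\eta\omega=(ax^2-bxy+cy^2)(v_1\wedge v_2)$, so ``forward of backward returns $\phi$ by construction'' already needs an honest check, not just an assertion.)

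Second, your invertibility--primitivity argument is circular. The statement ``$I$ invertible iff $I\bar I=N(I)\,S$ with $N(I)=\aa^{-1}\Lambda^2 I$'' is essentially what has to be established; and even granting it, you have not connected ``$I\bar I=N(I)\,S$'' to ``norms of elements of $I$ generate $N(I)$'', since $I\bar I$ is spanned by products $\omega_1\bar\omega_2$, not just by norms $\omega\bar\omega$. Moreover, the statement defines \emph{primitive} as ``does not factor through a proper sublattice'', i.e.\ the image ideal $p\bb_1^2+q\bb_1\bb_2+r\bb_2^2$ equals $\aa^{-1}\bb_1\bb_2$; this is not the same notion as the $R$-span of the \emph{values} of $\phi$, and the paper explicitly warns earlier against conflating the two. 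The paper's proof instead writes $I$ explicitly as $\bb_1+\bb_2(\xi/p)$ (after arranging $p\neq 0$), multiplies out $I\bar I$ term by term to get $p^{-1}(p\bb_1^2+q\bb_1\bb_2+r\bb_2^2+\xi\bb_1\bb_2)$, and reads off directly that this equals a scalar times $S$ exactly when the image ideal is all of $\aa^{-1}\bb_1\bb_2$; the converse direction is handled by observing that an imprimitive $\phi$ makes $I$ a module over a strictly larger ring $S'$, which obstructs invertibility over $S$.
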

\begin{proof}
Suppose first that we have a quadratic ring $S = R \oplus \aa\xi$, oriented by $\alpha(1 \wedge \xi) = 1$, and a fractional ideal $I$ of $R$. Construct a map $\phi : I \to \aa^{-1} \cdot \Lambda^2 I$ by
\[
  \omega \mapsto \omega \wedge \xi \omega.
\]
Here $\xi \omega \in \aa^{-1}I$ so the wedge product lies in $\aa^{-1} \cdot \Lambda^2 I$, and we get a well-defined quadratic map $\phi$, scaling appropriately when $I$ is scaled by an element of $S_K^\cross$. Note that $\phi$ is nonzero because, after extending scalars to $K$, the element $1 \in I_K = S_K$ is mapped to $1 \wedge \xi \neq 0$.

It will be helpful to write this construction in coordinates. Let $I = \bb_1\eta_1 \oplus \bb_2\eta_2$ be a decomposition into $R$-ideals, and let $\xi$ act on $I$ by the matrix
$\begin{bmatrix}
  a & b \\
  c & d
\end{bmatrix}$, that is,
\begin{equation} \label{eq:act on ideal}
\begin{aligned}
  \xi\eta_1 &= a\eta_1 + c\eta_2 \\
  \xi\eta_2 &= b\eta_1 + d\eta_2
\end{aligned}
\end{equation}
where $a,b,c,d$ belong to the relevant ideals: $a,d \in \aa^{-1}$, $b \in \aa^{-1}\bb_1\bb_2^{-1}$, and $c \in \aa^{-1}\bb_1^{-1}\bb_2$. Then we get
\begin{equation} \label{eq:qf}
\begin{aligned}
  \phi(x\eta_1 + y\eta_2) &= (x\eta_1 + y\eta_2) \wedge (x\xi\eta_1 + y\xi\eta_2) \\
  &= (x\eta_1 + y\eta_2) \wedge (ax\eta_1 + cx\eta_2 + by\eta_1 + dy\eta_2) \\
  &= (cx^2 + (d-a)xy - by^2)(\eta_1 \wedge \eta_2) \in \aa^{-1}\bb_1\bb_2(\eta_1 \wedge \eta_2) = \aa^{-1} \Lambda^2 I.
\end{aligned}
\end{equation}
(Now $\phi$ appears clearly as a tensor in $\Sym^2 I^* \tensor \aa^{-1} \cdot \Lambda^2 M$.)

We now seek to reconstruct the ideal $I$ from its associated quadratic form. Given an ideal $\aa$, a lattice $M = \bb_1\eta_1 \oplus \bb_2\eta_2$, and a quadratic map $\phi(x\eta_1 + y\eta_2) = (px^2 + qxy + ry^2)(\eta_1 \wedge \eta_2)$ to $\aa^{-1} \cdot \Lambda^2 M$, we may choose $a=0$, $b=-r$, $c = p$, and $d = q$ to recover an action \eqref{eq:act on ideal} of $\xi$ on $R$ yielding the form $\phi$. By \eqref{eq:qf}, this action is unique up to adding a constant to $a$ and $d$, which simply corresponds to a change of basis $\xi \mapsto \xi+a$. Next, by the Cayley-Hamilton theorem, the formal expression $\xi^2 - q\xi + pr$ annihilates $M$, so $M$ is a module over the ring $S = R[\aa\xi]/(\aa^{2}(\xi^2 - q\xi + pr))$ corresponding to the quadratic form $x^2 + qxy + pry^2$. The last step is to embed $M$ into $S_K$, or equivalently, to identify $M_K$ with $S_K$. For this, we divide into cases based on the kind of ring that $S_K$ is, or equivalently the factorization type of the polynomial $f(x) = x^2 - qx + pr$ over $K$.
\begin{itemize}
  \item If $f$ is irreducible, then $S_K$ is a field, and $M_K$ is an $S_K$-vector space of dimension $1$, isomorphic to $S_K$.
  \item If $f$ has two distinct roots, then $S_K \cong K \oplus K$. There are three different $S_K$-modules  having dimension $2$ as $K$-vector spaces: writing $I_1$ and $I_2$ for the two copies of $K$ within $S_K$, we can describe them as $I_1 \oplus I_1$, $I_2 \oplus I_2$, and $I_1 \oplus I_2$. But on the first two, every element of $S_K$ acts as a scalar. If $M_K$ were one of these, then the quadratic form $\phi(\omega) = \omega \wedge \xi \omega$ would be identically $0$, which is not allowed. So $M_K \cong I_1 \oplus I_2 \cong S_K$.
  \item Finally, if $f$ has a double root, then $S_K \equiv K[\epsilon]/\epsilon^2$. There are two $S_K$-modules having dimension $2$ as a $K$-vector space: $K\epsilon \oplus K\epsilon$ and $S_K$. On $K\epsilon \oplus K\epsilon$, $S_K$ acts by scalars and we get a contradiction as before. So $M_K \cong S_K$.
\end{itemize}
This shows that there is always at least one embedding of $M$ into $S_K$. To show there is at most one up to scaling, we need that every automorphism of $S_K$ as an $S_K$-module is given by multiplication by a unit. But this is trivial (the image of $1$ determines everything else).

It will be convenient to have as well an explicit reconstruction of an ideal from its associated quadratic form. First change coordinates on $M$ such that $p \neq 0$. (If $r \neq 0$, swap $\bb_1\eta_1$ and $\bb_2\eta_2$; if $p = 0$ but $q \neq 0$, translate $\eta_2 \mapsto \eta_2 + t\eta_1$ for any nonzero $t \in \bb_1\bb_2^{-1}$.) Then the ideal
\begin{equation} \label{eq:expl ideal}
  I = \bb_1 + \bb_2\left(\frac{\xi}{p}\right)
\end{equation}
of the ring $S = R[\aa\xi]/(\aa^2(\xi^2 - q\xi + pr))$ corresponding to the norm form $x^2 + qxy + pry^2$ is readily seen to yield the correct quadratic form.

We now come to the equivalence between invertibility of ideals and primitivity of forms. Suppose first that $\phi : M \to \aa^{-1} \cdot \Lambda^2 M$ is imprimitive, that is, there is an ideal $\aa'$ strictly containing $\aa$ such that $\phi$ actually arises from a quadratic map $\phi' : M \to \aa'^{-1} \cdot \Lambda^2 M$. Following through the (first) construction, we see that $\phi$ and $\phi'$ give the same $\xi$-action on $I = M$ but embed it as a fractional ideal in two different rings, $S = R \oplus \aa\xi$ and $S' = R \oplus \aa'\xi$. We naturally have $S_K \cong S'_K \cong K[\xi]/(\xi^2 - q\xi + pr)$, and $S$ is a subring of $S'$. Suppose $I$ had an inverse $J$ as an $S$-ideal. Then since $I$ is an $S'$-ideal, the product $IJ = S$ must be an $S'$-ideal, which is a contradiction.

Conversely, suppose that $\phi$ is primitive and $I$ has been constructed using \eqref{eq:expl ideal}. Consider the conjugate ideal
\[
  \bar{I} = \bb_1 + \bb_2 \frac{\bar{\xi}}{p} = \bb_1 + \bb_2 \frac{q - \xi}{p}
\]
and form the product
\begin{align*}
  I\bar{I} &= \left(\bb_1 + \bb_2 \frac{\xi}{p}\right)\left(\bb_1 + \bb_2 \frac{q - \xi}{p}\right) \\
  &= \bb_1^2 + \bb_1\bb_2\frac{\xi}{p} + \bb_1\bb_2\frac{q - \xi}{p} + \bb_2^2 \frac{\xi\bar{\xi}}{p^2} \\
  &= \frac{1}{p}(p \bb_1^2 + q \bb_1\bb_2 + r \bb_2^2 + \xi\bb_1\bb_2).
\end{align*}
The first three terms in the parenthesis are all fractional ideals in $K$. The condition that $\phi$ maps into $\aa^{-1} \cdot \Lambda^2 I$ is exactly that these lie in $\aa^{-1}\bb_1\bb_2$, and the condition of primitivity is that they do not all lie in any smaller ideal, that is, their sum is $\aa^{-1}\bb_1\bb_2$. So
\begin{equation} \label{eq:conjideal}
  I\bar{I} = \frac{\bb_1\bb_2}{p} (\aa^{-1} + R\xi) = \frac{\aa^{-1}\bb_1\bb_2}{p} \cdot S.
\end{equation}
We conclude that
\[
  I^{-1} = \aa\bb_1^{-1}\bb_2^{-1}p\bar{I} = \aa \alpha(\Lambda^2 I)^{-1} \bar{I}
\]
is an inverse for $I$.
\end{proof}
Note that our proof of the invertibility-primitivity equivalence shows something more: that \emph{any} fractional ideal $I$ of a quadratic algebra $S$ is invertible when considered as an ideal of a certain larger ring $S'$, found by ``canceling common factors'' in its associated quadratic form. The following relation is worth noting:
\begin{cor}\label{cor:norm}
If $I$ is an ideal of a quadratic algebra $S$ and $S' = \End I \subseteq S_K$ is its ring of endomorphisms, then
\[
  I\bar{I} = \frac{\alpha(\Lambda^2I)}{\alpha(\Lambda^2 S')} \cdot S'.
\]
\end{cor}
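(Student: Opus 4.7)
The plan is to reduce the identity to the case in which $I$ is an invertible ideal of $S$, where it is essentially a coordinate-free rewriting of equation \eqref{eq:conjideal} in the proof of Theorem \ref{thm:quadideals}, and then handle the general case by replacing $S$ with $S' = \End I$, over which $I$ becomes invertible.

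First I would rephrase \eqref{eq:conjideal} in terms of $\alpha$. When $I$ is invertible and presented in the form \eqref{eq:expl ideal} with $\eta_1 = 1$ and $\eta_2 = \xi/p$, we have $\eta_1 \wedge \eta_2 = (1/p)(1 \wedge \xi)$, so the orientation $\alpha$ (normalized by $\alpha(1 \wedge \xi) = 1$) satisfies $\alpha(\Lambda^2 I) = \bb_1\bb_2/p$, while $\alpha(\Lambda^2 S) = \aa$. Substituting into \eqref{eq:conjideal} yields $I \bar I = (\alpha(\Lambda^2 I)/\alpha(\Lambda^2 S)) \cdot S$, which is the claim in the special case $S' = S$.

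For an arbitrary ideal $I$, set $S' = \End I = \{x \in S_K : x I \subseteq I\}$. Since $S'\omega \subseteq I$ for any nonzero $\omega \in I$, $S'$ lies inside the lattice $\omega^{-1}I$ and is therefore itself a rank-$2$ $R$-lattice; as a subring of $S_K$ containing $R$, it is a quadratic $R$-algebra, and it inherits its orientation and conjugation from $S_K$. The crux is to show that $I$ is invertible as an ideal of $S'$, since then the previous paragraph, applied with $S'$ in place of $S$, yields the stated formula. For this, I would invoke Theorem \ref{thm:quadideals}: the quadratic form $\phi_I : I \to K \cdot \Lambda^2 I$, $\omega \mapsto \omega \wedge \xi\omega$, takes values in some smallest sublattice $\bb^{-1} \cdot \Lambda^2 I$ of its codomain, and viewed as a form into this sublattice, $\phi_I$ is primitive. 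The parametrization then produces an invertible ideal over the quadratic subring $R \oplus \bb\xi \subseteq S_K$, and the corollary follows once one identifies $R \oplus \bb\xi$ with $S'$.

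The identification $S' = R \oplus \bb\xi$ is the main obstacle. The inclusion $\supseteq$ is immediate, since the Cayley--Hamilton argument inside the proof of Theorem \ref{thm:quadideals} shows that $R \oplus \bb\xi$ acts on $I$. For the opposite inclusion, note that $\End I$ is a quadratic subalgebra of $S_K$ containing $S$, hence of the form $R \oplus \bb''\xi$ for some ideal $\bb'' \supseteq \aa$; applying Theorem \ref{thm:quadideals} over this algebra, the fact that $I$ is a fractional ideal of $R \oplus \bb''\xi$ forces $\phi_I$ to take values in $\bb''^{-1} \cdot \Lambda^2 I$, whence $\bb'' \subseteq \bb$ by minimality. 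This completes the argument.
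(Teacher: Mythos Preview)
Your argument is correct and follows essentially the same route as the paper's: both identify $\End I$ with the quadratic overring $R\oplus\bb\xi$ obtained by taking $\bb$ maximal so that $\phi_I$ lands in $\bb^{-1}\Lambda^2 I$, observe that $I$ is invertible over this ring by the primitivity criterion of Theorem~\ref{thm:quadideals}, and then read off the formula from \eqref{eq:conjideal}. The paper is terser---it dispatches the identification $S'=\End I$ in one sentence (``or else $I$ would be an ideal of an even larger quadratic ring'')---whereas you spell out both inclusions; but the content is the same.
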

\begin{proof}
The ring $S'$ is the one occurring in the proof that imprimitivity implies noninvertibility, provided that the ideal $\aa'$ is chosen to be as large as possible (i.e.~equal to $(p \bb_1^2 + q \bb_1\bb_2 + r \bb_2^2)^{-1}$), so that $I$ is actually invertible with respect to $S'$. This $S'$ must be the endomorphism ring $\End I$, or else $I$ would be an ideal of an even larger quadratic ring. (We here need that $\End I$ is finitely generated and hence a quadratic ring. This is obvious, as it is contained in $x^{-1}I$ for any $x \in S_K^\cross \cap I$.)

Viewing $\alpha$, by restriction, as an orientation on $S'$, we have $\alpha(\Lambda^2 S') = \aa'$ and the formula is reduced to that for $I^{-1}$ above.
\ignore{
The conclusion follows from \eqref{eq:conjideal} and the computation
\[
  \Lambda^2 I = \bb_1 \wedge \left(\bb_2 \frac{\xi}{p}\right) = \frac{\bb_1\bb_2}{p} (1 \wedge \xi) = \alpha^{-1}\left(\frac{\aa^{-1}\bb_1\bb_2}{p}\right).
\]
}
\end{proof}

\begin{examp} \label{ex:5i}
If $R = \ZZ$ (or more generally any PID), then the situation simplifies to $\aa = \ZZ$ and $M = \ZZ^2$, and we recover a bijection between ideal classes and binary quadratic forms. But the theorem also requires us, when changing coordinates on $M$, to change coordinates on $\Lambda^2 M$ appropriately; that is, ideal classes are in bijection with $\GL_2(\ZZ)$-orbits of binary quadratic forms $\phi : \ZZ^2 \to \ZZ$, not under the natural action but under the twisted action
\[
  \left(\begin{bmatrix}
    a & b \\ c & d
  \end{bmatrix}
  \mathop{.\vphantom{I}} \phi\right)(x,y) = \frac{1}{ad-bc} \cdot \phi(ax + cy, bx + dy).
\]
(Compare \cite{potf}, p.~142 and \cite{WGauss}, Theorem 1.2.)

For an example not commonly encountered in the literature, take the order $S = \ZZ[5i]$ in the domain $\ZZ[i]$. Its ideal classes correspond simply to $\GL_2(\ZZ)$-orbits of quadratic forms $px^2 + qxy + ry^2$ having discriminant $q^2 - 4pr = -100$. Using the standard theory of ``reduction'' of quadratic forms developed by Lagrange (see \cite{potf}, pp.~26ff.), we may limit our search to the bounded domain where $|q| \leq r \leq p$ and find that there are precisely three, with three corresponding ideal classes:
\begin{alignat*}{3}
  \phi_1(x,y) &= x^2 + 25y^2         &&\leftrightsquigarrow{}& S &= \ZZ[5i] \\
  \phi_2(x,y) &= 2x^2 + 2xy + 13y^2\,&&\leftrightsquigarrow{}& A &= {\ZZ}{\<5, 1+i\>} \\
  \phi_3(x,y) &= 5x^2 + 5y^2         &&\leftrightsquigarrow{}& B &= \ZZ[i].
\end{alignat*}
The first two ideals, which correspond to primitive forms, are invertible (indeed $A \cdot iA = S$); the third is not. In fact we can build a multiplication table for the ideal class semigroup.
\[
\begin{tabular}{c|ccc}
$\cdot$ & $S$ & $A$ & $B$ \\ \hline
$S$ & $S$ & $A$ & $B$ \\
$A$ & $A$ & $S$ & $B$ \\
$B$ & $B$ & $B$ & $B$
\end{tabular}
\]
\end{examp}

\section{Ideal triples}
\label{sec:box}
We turn now to one of Bhargava's most widely publicized contributions to mathematics, the reinterpretation of Gauss's 200-year-old composition law on primitive binary quadratic forms in terms of simple operations on a $2\times 2\times 2$ box of integers. In fact, Bhargava produced something rather more general: a bijection (\cite{B1}, Theorem~1) that takes \emph{all} $2\times 2\times 2$ boxes satisfying a mild nondegeneracy condition, up to the action of the group
\[
  \Gamma = \left\{(M_1, M_2, M_3) \in (\GL_2\ZZ)^3 : \prod_i \det M_i = 1\right\},
\]
to triples of fractional ideals $(I_1,I_2,I_3)$ in a quadratic ring $S$ that are \emph{balanced,} that is, satisfy the two conditions
\begin{enumerate}[$($a$)$]
  \item $I_1 I_2 I_3 \subseteq S$;
  \item $N(I_1)N(I_2)N(I_3) = 1$. Here $N(I)$ is the norm of the ideal $I$, defined by the formula $N(I) = [A:I]/[A:S]$ for any $\ZZ$-lattice $A$ containing both $S$ and $I$. (This should not be confused with the ideal generated by the norms of the elements of $I$. Even over $\ZZ$, the two notions differ: $2\cdot \ZZ[i]$ is an ideal of norm $2$ in the ring $\ZZ[2i]$, but every element of $2\cdot \ZZ[i]$ has norm divisible by $4$.)
\end{enumerate}
The ideals $I_i$ are unique up to a scaling by constants $\gamma_i \in S_\QQ^\cross$ of product $1$.

Our task will be to generalize this result to an arbitrary Dedekind domain. First, the definition of balanced extends straightforwardly, provided that we define the norm of a fractional ideal $I$ properly, as the \emph{index} of $I$ in $S$ as an $R$-lattice. The resulting notion of balanced is a special case of the definition used in \cite{W2xnxn}:
\begin{defn} \label{defn:balanced}
A triple of fractional ideals $I_1, I_2, I_3$ of an $R$-algebra $S$ is \emph{balanced} if
\begin{enumerate}[$($a$)$]
  \item $I_1 I_2 I_3 \subseteq S$;
  \item the image of $\Lambda^2 I_1 \tensor \Lambda^2 I_2 \tensor \Lambda^2 I_3$ in $(\Lambda^2 S_K)^{\tensor 3}$ is precisely $(\Lambda^2 S)^{\tensor 3}$.
\end{enumerate}
\end{defn}
The objects that we will use on the other side of the bijection are, as one might expect, not merely $8$-tuples of elements from $R$, because the class group intrudes. The appropriate notion is as follows:
\begin{defn} \label{defn:box}
Let $\aa$ be an ideal class of $R$. A \emph{Bhargava box} of type $\aa$ over $R$ consists of the following data:
\begin{itemize}
  \item three rank-$2$ lattices $M_1$, $M_2$, $M_3$;
  \item an orientation isomorphism $\theta : \Lambda^2 M_1 \tensor \Lambda^2 M_2 \tensor \Lambda^2 M_3 \to \aa^3$;
  \item a trilinear map $\beta : M_1 \tensor M_2 \tensor M_3 \to \aa$ satisfying the following nondegeneracy condition: each of the three partial duals $\beta_i : M_j \tensor M_k \to \aa M_i^*$ $(\{i,j,k\} = \{1,2,3\})$ has image a full-rank sublattice.
\end{itemize}
\end{defn}
If we choose a decomposition of each $M_i$ into a direct sum $\bb_{i1} \oplus \bb_{i2}$ of ideals, then $\theta$ becomes an isomorphism from $\prod_{i,j} \bb_{ij}$ to $\aa^3$ (which we may take to be the identity), while $\beta$ is determined by eight coefficients
\[
  \beta_{ijk} \in \bb_{1i}^{-1}\bb_{2j}^{-1}\bb_{3k}^{-1}\aa.
\]
Thus we stress that, in spite of all the abstraction, our parameter space indeed still consists of (equivalence classes of) $2\times 2\times 2$ boxes of numbers lying in certain ideals contained in $K$.
\begin{thm}[cf.~\cite{B1}, Theorem 1; \cite{W2xnxn}, Theorem 1.4]\label{thm:box}
For each ideal $\aa$ of $R$, there is a bijection between
\begin{itemize}
  \item balanced triples $(I_1,I_2,I_3)$ of ideals in an oriented quadratic ring $S$ of type $\aa$, up to scaling by factors $\gamma_1$, $\gamma_2$, $\gamma_3 \in S_K^\cross$ with product $1$;
  \item Bhargava boxes of type $\aa$.
\end{itemize}
\end{thm}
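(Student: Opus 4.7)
The plan is to construct the bijection in both directions and verify that the constructions are mutually inverse, leveraging Theorem \ref{thm:quadideals} for the reverse direction.

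\textbf{Forward direction.} Given a balanced triple $(I_1, I_2, I_3)$ in an oriented quadratic ring $S = R \oplus \aa\xi$ with $\alpha(1 \wedge \xi) = 1$, I would set $M_i = I_i$. Extending $\alpha$ to a $K$-linear map on $\Lambda^2 S_K$, the restriction of $\alpha^{\tensor 3}$ to the sublattice $\Lambda^2 I_1 \tensor \Lambda^2 I_2 \tensor \Lambda^2 I_3 \subseteq (\Lambda^2 S_K)^{\tensor 3}$ is an isomorphism onto $\aa^3$ precisely by the balanced condition (b); take this as $\theta$. I define $\beta(a \tensor b \tensor c) = \alpha(1 \wedge abc)$, which lies in $\aa$ because $abc \in I_1 I_2 I_3 \subseteq S$ by condition (a), and whose nondegeneracy follows because each $I_i$ spans $S_K$ over $K$, forcing the partial multiplication $I_j \tensor I_k \to S_K$ to surject after tensoring with $K$. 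Under the equivalence $I_i \mapsto \gamma_i I_i$ with $\prod \gamma_i = 1$, the form $\beta$ is unchanged (since $\prod \gamma_i = 1$) and $\theta$ is unchanged (since $\Lambda^2(\gamma_i I_i) = N(\gamma_i) \Lambda^2 I_i$ inside $\Lambda^2 S_K$ and $\prod N(\gamma_i) = N(\prod \gamma_i) = 1$), so the Bhargava box is a well-defined invariant of the equivalence class.

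\textbf{Reverse direction.} From a Bhargava box $(M_1, M_2, M_3, \theta, \beta)$, I would extract three quadratic forms by slicing: for each $v \in M_i$, the bilinear form $\beta(v, \cdot, \cdot) : M_j \tensor M_k \to \aa$ has a determinant in $\Lambda^2 M_j^* \tensor \Lambda^2 M_k^* \tensor \aa^2$, which $\theta$ canonically identifies with $\aa^{-1} \Lambda^2 M_i$; as $v$ varies, this gives a quadratic form $\phi_i : M_i \to \aa^{-1} \Lambda^2 M_i$, nonzero by the nondegeneracy of $\beta$. Theorem \ref{thm:quadideals} applies to each $\phi_i$ and returns an oriented quadratic ring $S_i$ of type $\aa$ together with an ideal class in $S_i$. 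The crucial remaining claim is that $S_1 = S_2 = S_3$ as oriented quadratic rings. I would pick decompositions $M_i = \bb_{i1}u_{i1} \oplus \bb_{i2}u_{i2}$ (with the ideals scaled so $\theta$ is the identity on products), at which point the coefficients of $\phi_i$ are explicit $2 \times 2$ subdeterminants of the eight-entry box $(\beta_{ijk})$, and a direct calculation identifies the resulting $t$- and $u$-invariants of the ring as the same $\SL_2^3$-invariants of the box for each of the three slicings.

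\textbf{Mutual inversion and the main obstacle.} Composing forward then reverse, the coefficients of the extracted $\phi_i$ reproduce those of the original action of $\xi$ on $I_i$ via formula \eqref{eq:qf}, and the explicit reconstruction \eqref{eq:expl ideal} recovers $I_i$. Composing reverse then forward, the definition $\beta(a \tensor b \tensor c) = \alpha(1 \wedge abc)$ computed inside the reconstructed $S$ returns the original $\beta$. Condition (b) of balancedness is built into $\theta$, while (a), namely $I_1 I_2 I_3 \subseteq S$, follows from $\beta$ taking values in $\aa$. The main obstacle is the coincidence of the three slicing rings: over $\ZZ$, Bhargava obtains this by recognizing the common discriminant as the discriminant of the box, but over a Dedekind domain the invariant lives in a specific ideal and the orientation $\theta$ must be tracked carefully through each slice. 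The cleanest resolutions are either an explicit coordinate computation of $(t,u)$ in each of the three directions, or a reduction to the PID case by localizing at each prime of $R$, where Bhargava's original identity applies verbatim, and then gluing via the fact that oriented quadratic algebras are determined by their localizations.
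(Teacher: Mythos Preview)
Your forward direction and the extraction of the three quadratic forms $\phi_i$ match the paper. The genuine gap is in the reverse direction: after invoking Theorem~\ref{thm:quadideals} three times you have only obtained three ideal \emph{classes} in a common ring $S$, together with three separate $S$-module structures on the $M_i$. You never choose specific representatives $I_i \subseteq S_K$ so that the multiplication map $I_1 \otimes I_2 \otimes I_3 \to S_K$ actually equals $\beta$ (after projecting by $\alpha(1 \wedge \bullet)$). Your sentence ``the definition $\beta(a\otimes b\otimes c)=\alpha(1\wedge abc)$ computed inside the reconstructed $S$ returns the original $\beta$'' is precisely the statement that needs proof, not an observation; nothing in Theorem~\ref{thm:quadideals} links the three embeddings to one another or to $\beta$.

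Relatedly, your claim that condition~(a) ``follows from $\beta$ taking values in $\aa$'' is false. Writing $S = R \oplus \aa\xi$, the projection $\alpha(1\wedge s)$ reads off only the $\xi$-coefficient of $s \in S_K$; knowing it lies in $\aa$ for every product $abc$ says nothing about the $R$-coefficient, so it does not force $abc \in S$. The paper closes exactly this gap with the explicit ``voil\`a'' formula for elements $\tau_{ijk} \in S$: these assemble into an $R$-trilinear map $\tilde\beta : M_1 \otimes M_2 \otimes M_3 \to S$ lifting $\beta$, and a direct check shows $\tilde\beta$ is in fact $S$-trilinear for the $\xi$-actions already constructed. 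From $\tilde\beta$ one reads off compatible embeddings of the $M_i$ as ideals whose product is the image of $\tilde\beta$, which lies in $S$ by construction; this simultaneously yields balancedness~(a) and the recovery of $\beta$. Without an analogue of this lifting step (or some other argument producing a simultaneous embedding), your reverse construction is incomplete. A minor additional point: the paper's $\phi_i$ is defined using $-\theta$, and this sign is needed for the forms to agree with the ideal-theoretic $\omega \mapsto \omega \wedge \xi\omega$; you should track it.
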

\begin{rem}
Two balanced ideal triples may be inequivalent for the purposes of this bijection even if corresponding ideals belong to the same class (see Example \ref{ex:bbz}\ref{item:5i box}). Consequently a Bhargava box cannot be described as corresponding to a balanced triple of ideal \emph{classes.}
\end{rem}
\begin{proof}
The passage from ideals to the Bhargava box is simple and derived directly from \cite{B1}. Given a balanced triple $(I_1,I_2,I_3)$ in a quadratic ring $S$ with an orientation $\alpha : \Lambda^2 S \to \aa$, construct the trilinear map
\begin{align*}
  \beta : I_1 \tensor I_2 \tensor I_3 &\to \aa \\
  x \tensor y \tensor z &\mapsto \alpha(1 \wedge xyz).
\end{align*}
This, together with the identification $\theta$ coming from condition (b) of Definition \ref{defn:balanced}, furnishes the desired Bhargava box. Since each $I_i$ spans $S_K$, the nondegeneracy is not hard to check.

We seek to invert this process and reconstruct the ring $S$, the orientation $\alpha$, and the ideals $I_i$ uniquely from the Bhargava box. We begin by reconstructing the quadratic forms $\phi_i : M_i \to \aa^{-1}  \cdot \Lambda^2 M_i$ corresponding to the ideals $I_i$. For this we first use $\beta$ to map $M_1$ to $\Hom(M_2 \tensor M_3,\aa)$, in other words $\Hom(M_2, \aa M_3^*)$. We then take the determinant, which lands us in $\Hom(\Lambda^2 M_2, \Lambda^2(\aa M_3^*)) \cong \aa^2 \cdot \Lambda^2 M_2^* \tensor \Lambda^2 M_3^*$, which can be identified via $-\theta$ (note the sign change) with $\aa^{-1} \Lambda^2 M_1$. We thus get a quadratic form $\phi'_1 : M_1 \to \aa^{-1} \Lambda^2 M_1$. We claim that if the Bhargava box arose from a triple of ideals, then this is the natural form $\phi_1 : x \mapsto x \wedge \xi x$ on $I_1$. For convenience we will extend scalars and prove the equality as one of forms on $M_1^K \cong S_K$. To deal with $\phi'_1$, we must analyze
\[
  \beta(x) = (y \mapsto (z \mapsto \alpha(1\wedge xyz))) \in \Hom(M_2^K, M_3^{K*}).
\]
Now whereas $M_2^K$ is naturally identifiable with $S_K$, to deal with $M_3^{K*} \cong S_K^*$ we have to bring in the symmetric pairing $\alpha(1 \wedge \bullet\bullet) : S_K \tensor_K S_K \to K$, which one easily checks is nondegenerate and thus identifies $S_K^*$ with $S_K$. So we have transformed $\beta(x)$ to the element 
\[
  \beta'(x) = (y \mapsto xy) \in \Hom_K(S_K, S_K).
\]
We then take the determinant $\det \beta'(x)$, which is simply the norm $N(x) \in K \cong \Hom_K(\Lambda^2 S_K,{} \Lambda^2 S_K)$. This is to be compared to
\[
  \phi_1(x) = x \wedge \xi x = N(x) (1 \wedge \xi) = \alpha^{-1}(N(x)).
\]
It then remains to check that we have performed the identifications properly, that is, that the four isomorphisms
\[
\xymatrix{
  K & \Lambda^2(M_1^K \tensor_{S_K} M_2^K) \ar[l]_\alpha \ar[d]^{\wedge^2(x\tensor y \mapsto \alpha(xy\bullet))} \\
  \Lambda^2M_1^K \tensor \Lambda^2M_2^K \ar[r]^{-\theta} \ar[u]^{\alpha \tensor \alpha} & \Lambda^2M_3^{K*}
}
\]
are compatible. In particular we discover that the pairing $\alpha(1 \wedge \bullet\bullet)$ is given in the basis $\{1, \xi\}$ by the matrix
\[
  \begin{bmatrix}
  0 & 1 \\
  1 & \tr \xi
  \end{bmatrix}
\]
of determinant $-1$, explaining the compensatory minus sign that must be placed on $\theta$.

Now write $M_i = \bb_{i1}\eta_{i1} \oplus \bb_{i2}\eta_{i2}$ where $\theta : \prod_{i,j} \bb_{ij} \to \aa^3$ may be assumed to be the identity map, and express $\beta$ in these coordinates as
\[
  \beta\left(\sum_{i,j,k} x_{ijk}\eta_{1i}\eta_{2j}\eta_{3k}\right) = \sum_{i,j,k} a_{ijk}x_{ijk}.
\]
It will be convenient to create the single-letter abbreviations $a = a_{111}$, $b = a_{112}$, $c = a_{121}$, continuing in lexicographic order to $h = a_{222}$. Then $\phi_1$ sends an element $x\eta_{11} + y\eta_{12} \in M_1$ to the determinant
\[
  -\det\begin{bmatrix}
    ax + ey & bx + fy \\
    cx + gy & dx + hy
  \end{bmatrix} = (bc - ad)x^2 + (bg + cf - ah - de)xy + (fg - eh)y^2.
\]
We claim that $\phi_1 \neq 0$. If not, the linear maps from $M_2^K$ to $M_3^{K*}$ corresponding to every element of $M_1^K$ are singular. It is not hard to prove that a linear system with dimension at most $2$ of singular maps from $K^2$ to $K^2$ has either a common kernel vector or images in a common line, and to deduce from this that the partial dual $M_1^K \tensor M_3^K \to M_2^{K*}$ or $M_1^K \tensor M_2^K \to M_3^{K*}$, respectively, is not surjective, a contradiction.

Thus $M_1$ can be equipped with the structure of a fractional ideal of some quadratic ring, with a $\xi$-action given by the matrix
\begin{equation} \label{eq:xi action}
  \begin{bmatrix}
    ah + de & eh - fg \\
    bc - ad & bg + cf
  \end{bmatrix}
\end{equation}
where we have added a scalar matrix such that the trace $ah + bg + cf + de$, and indeed the entire characteristic polynomial
\begin{equation} \label{eq:long ring}
  F(x) = x^2 - (ah + bg + cf + de)x + abgh + acfh + adeh + bcfg + bdeg + cdef - adfg - bceh,
\end{equation}
is symmetric under permuting the roles of $M_1$, $M_2$, and $M_3$. In other words, we have exhibited a single ring $S = R[\aa\xi]/\aa^2F(\xi)$ over which $M_1$, $M_2$, and $M_3$ are modules, under the $\xi$-action \eqref{eq:xi action} and its symmetric cousins
\[
  \begin{bmatrix}
  ah + cf & ch - dg \\
  be - af & bg + de
  \end{bmatrix} 
  \text{ on $M_2$ and }
  \begin{bmatrix}
  ah + bg & bh - df \\
  ce - ag & cf + de
  \end{bmatrix}
  \text{ on $M_3$.}
\]

The next step is is the construction of the elements $\tau_{ijk}$ that will serve as the products $\eta_{1i}\eta_{2j}\eta_{3k}$ of the ideal generators. Logically, it begins with a ``voil\`a'' (compare \cite{B1}, p.~235):
\[
  \tau_{ijk} = \begin{cases}
    a_{\bar{i}jk}a_{i\bar{j}k}a_{ij\bar{k}} + a_{ijk}^2a_{\bar{i}\bar{j}\bar{k}} - a_{ijk}\bar{\xi}, &
    i+j+k \text{ odd,} \\
    -a_{\bar{i}jk}a_{i\bar{j}k}a_{ij\bar{k}} - a_{ijk}^2a_{\bar{i}\bar{j}\bar{k}} + a_{ijk}\xi, &
    i+j+k \text{ even.}
  \end{cases}
\]
Here $\bar{i}$, $\bar{j}$, $\bar{k}$ are shorthand for $3-i$, etc., while $\bar{\xi}$ denotes the Galois conjugate $\tr(\xi) - \xi$. Bhargava apparently derived this formula (in the case $R = \ZZ$) by solving the natural system of quadratic equations that the $\tau$'s must satisfy ($\tau_a\tau_d = \tau_b\tau_c$ and so on). For our purposes it suffices to note that this formula is well-defined over any Dedekind domain (in contrast to \cite{B1} where there is a denominator of $2$) and yields a trilinear map $\tilde{\beta} : M_1\tensor M_2\tensor M_3 \to S$, defined by
\[
  \tilde{\beta}\left(\sum_{i,j,k} x_{ijk}\eta_{1i}\eta_{2j}\eta_{3k}\right) = \sum_{i,j,k} \tau_{ijk}x_{ijk},
\]
with the property that following with the projection $\alpha(1 \wedge \bullet) : S \to \aa$ gives back $\beta$. We claim that $\tilde{\beta}$, in addition to being $R$-trilinear, is $S$-trilinear under the newfound $S$-actions on the $M_i$. This is a collection of calculations involving the action of $\xi$ on each factor, for instance
\[
  (ah + de)\tau_a + (bc-ad)\tau_e = \xi\tau_a
\]
(where we have taken the liberty of labeling the $\tau_{ijk}$ as $\tau_a,\ldots,\tau_h$ in the same manner as the $a_{ijk}$). This is routine, and all the other edges of the box can be dealt with symmetrically. So, extending scalars to $K$, we get a map
\[
  \tilde{\beta} : M_1^K \tensor_{S_K} M_2^K \tensor_{S_K} M_3^K \to S_K.
\]
Since each $M_i$ is isomorphic to a fractional ideal, each $M_i^K$ is isomorphic to $S_K$ and thus so is the left side. Also, it is easy to see that $\tilde{\beta}$ is surjective or else $\beta$ would be degenerate. So once two identifications $\iota_1 : M_1 \to I_1$, $\iota_2 : M_2 \to I_2$ are chosen, the third $\iota_3 : M_3 \to I_3$ can be scaled such that
$\tilde{\beta}(x \tensor y \tensor z) = \iota_1(x)\iota_2(y)\iota_3(z)$ and hence $\beta(x \tensor y \tensor z) = \alpha(1 \wedge \iota_1(x)\iota_2(y)\iota_3(z))$ is as desired.

We have now constructed a triple $(I_1,I_2,I_3)$ of fractional ideals such that the map $\alpha(1\wedge \bullet\bullet\bullet) : I_1 \tensor I_2 \tensor I_3 \to K$ coincides with $\beta$. Two verifications remain:
\begin{itemize}
  \item That $I_1 I_2 I_3 \subseteq S$. Since $I_1I_2I_3$ is the $R$-span of the eight $\bb_{1i}\bb_{2j}\bb_{3k}\tau_{ijk}$, this is evident from the construction of the $\tau_{ijk}$.
  \item That $\prod_i \Lambda^2(I_i) = \prod_i \Lambda^2(S)$, and more strongly that the diagram
  \[
    \xymatrix{
      \bigotimes_i \Lambda^2(M_i) \ar[r]^{\prod_i \iota_i} \ar[dr]_{\theta} &
      \bigotimes_i \Lambda^2(I_i) \ar[d]^{\alpha^{\tensor 3}} \\
      & K
    }
  \]
  commutes. This is a verification similar to that which showed the correspondence of the forms $\phi_i$. Indeed, if we had recovered a triple of ideals that produced the correct $\beta$ but the wrong $\theta$, then the $\phi$'s as computed from $\beta$ and the two $\theta$'s would have to mismatch.
\end{itemize}

This concludes the proof that each Bhargava box corresponds to at least one balanced triple. We must also prove that two balanced triples $(I_1,I_2,I_3)$ and $(I'_1, I'_2, I'_3)$ yielding the same Bhargava box must be equivalent; but here we are helped greatly by the results that we have already proved. Namely, since the forms $\phi_i$ associated to the ideals match, these ideals must lie in the same oriented quadratic ring $S$ and there must be scalars $\gamma_i \in S_K^\cross$ such that $I'_i = \gamma_i I_i$. We may normalize such that $\gamma_2 = \gamma_3 = 1$. Then, for all $x \in I_1, y \in I_2, z \in I_3$,
\[
  0 = \beta(xyz) - \beta(xyz) = \alpha(1 \wedge xyz) - \alpha(1 \wedge \gamma_1 xyz) = \alpha(1 \wedge (1-\gamma_1)xyz).
\]
In other words, we have $(1-\gamma_1)x \in K$ for every $x \in I_1I_2I_3$. Extending scalars, we get the same for all $x \in KI_1I_2I_3 = S_K$ which implies $1 - \gamma = 0$.
\end{proof}

\subsection{Relation with the class group}
Just as in the case $R = \ZZ$, we can restrict to invertible ideals and get a new description of the class group.
\begin{thm}[cf. \cite{B1}, Theorem 1]
Let $\aa$ be an ideal of $R$, and let $G$ be the set of rank-$2$ lattices $M$ equipped with a primitive quadratic form $\phi : M \to \aa^{-1} \cdot \Lambda^2 M$, up to isomorphism. Then the relations
\begin{itemize}
  \item $\phi_1 \ast \phi_2 \ast \phi_3 = 1$ for all $(\phi_1,\phi_2,\phi_3)$ arising from a Bhargava box;
  \item $\phi = 1$ if $\aa^{-1} \cdot \Lambda^2 M$ is principal and $\phi$ attains a generator of it
\end{itemize}
give $G$ the structure of a disjoint union of abelian groups. That is, if we partition $G$ into equivalence classes under the relation that $\phi_1 \sim \phi_2$ if $\phi_1$ and $\phi_2$ are two of the three forms arising from one Bhargava box, then each equivalence class gains the structure of an abelian group. These groups are isomorphic to the class groups of all quadratic extensions of $R$ of type $\aa$ under the bijection of Theorem \ref{thm:quadideals}. 
\end{thm}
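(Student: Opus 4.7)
I would reduce the statement, via Theorems \ref{thm:quadideals} and \ref{thm:box}, to standard facts about the class groups of oriented quadratic $R$-algebras. By Theorem \ref{thm:quadideals}, primitive forms $\phi : M \to \aa^{-1}\Lambda^2 M$ are in canonical bijection with invertible ideal classes of oriented quadratic $R$-algebras of type $\aa$. Each form $\phi$ thus carries an intrinsic ``underlying ring'' $S = S(\phi)$, so $G$ decomposes as a disjoint union $G = \bigsqcup_S G_S$ indexed by isomorphism classes of such algebras, with each $G_S$ in bijection with $\Pic(S)$. It then suffices to check that the two listed relations, transported through this bijection, determine exactly the standard group structure on each $\Pic(S)$.

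For the first relation I would invoke Theorem \ref{thm:box}: a Bhargava box whose three associated forms are all primitive corresponds to a balanced triple of invertible ideals $(I_1,I_2,I_3)$ in a single oriented quadratic ring $S$. Because the ideals are invertible, a short norm/exterior-power calculation shows that conditions (a) and (b) of Definition \ref{defn:balanced} together force the strict equality $I_1 I_2 I_3 = S$, and hence $[I_1][I_2][I_3] = 1$ in $\Pic(S)$. Conversely, given any triple of invertible classes in a common $\Pic(S)$ whose product is trivial, I would rescale representatives by scalars $\gamma_i \in S_K^\cross$ of product $1$ so that $I_1 I_2 I_3 = S$, producing a balanced triple and thus, by Theorem \ref{thm:box}, a realizing Bhargava box. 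So the first relation captures precisely the ternary products equal to $1$ in each $\Pic(S)$, and in particular never relates forms in distinct components $G_S$.

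For the second relation, I would check directly from the explicit construction in the proof of Theorem \ref{thm:quadideals}: for $I = S$ the associated form $\phi(x) = x \wedge \xi x$ sends $1$ to $1 \wedge \xi$, which is a generator of $\aa^{-1}\Lambda^2 S$; scaling $I = \gamma S$ by $\gamma \in S_K^\cross$ rescales everything consistently, so every principal ideal yields a form attaining a generator. For the converse, if $\phi(x_0)$ is a generator of $\aa^{-1}\Lambda^2 M$, the reconstruction $I = \bb_1 + \bb_2(\xi/p)$ from that proof exhibits $I$ as principal with essentially $x_0$ as generator. Combining these three observations identifies each component of $G$, modulo the listed relations, with the abelian group $\Pic(S)$, proving the theorem.

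The main obstacle I anticipate is the exterior-power bookkeeping at the start of the second paragraph: showing cleanly that, for invertible ideals, condition (b) of Definition \ref{defn:balanced} upgrades the containment $I_1 I_2 I_3 \subseteq S$ to equality, and that the rescalings $I_i \mapsto \gamma_i I_i$ interact correctly with the orientation $\alpha$ on $\Lambda^2 S$. Once that technical point is dispatched, the remainder is a direct assembly of Theorems \ref{thm:quadideals} and \ref{thm:box} with a short calculation pinning down the trivial class.
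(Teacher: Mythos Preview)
Your proposal is correct and follows essentially the same route as the paper: reduce via Theorems~\ref{thm:quadideals} and \ref{thm:box} to the statement that balanced triples of invertible ideals are exactly those with $I_1I_2I_3=S$, then pin down the identity element, and finally appeal to the elementary fact that an abelian group is determined by its zero-sum triples together with the identity. The paper handles the identity element slightly more cleanly by observing that a form attaining a generator of $\aa^{-1}\Lambda^2 M$ is, after choosing that generator, precisely a form attaining the value $1$, hence by Theorem~\ref{thm:nice ring} the norm form of $S$ itself; this avoids your hands-on reconstruction argument via $I=\bb_1+\bb_2(\xi/p)$, which would require an extra word on why the element $x_0$ can be taken as part of an ideal decomposition of $M$.
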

\begin{proof}
It is easy to see that a triple $(I_1,I_2,I_3)$ of invertible ideals in a ring $S$ is balanced if and only if $I_1I_2I_3 = S$. Each $\sim$-equivalence class in the theorem is the family of forms corresponding to the ideals in a single ring, since we showed that the three forms arising from one Bhargava box belong to the same ring, and conversely if $I_1$ and $I_2$ belong to the same ring then $(I_1,I_2,I_1^{-1}I_2^{-1})$ is balanced (which also shows that $\sim$ is truly an equivalence relation).

The condition that $\phi$ attains a generator of $\aa^{-1} \cdot \Lambda^2 M$ simply says that $\phi$ matches the form corresponding to the entire ring $S$ itself in Theorem \ref{thm:nice ring}, which is also the form corresponding to the principal class in Theorem \ref{thm:quadideals}. Now the theorem is reduced to the elementary fact that the structure of an abelian group is determined by the triples of elements that sum to $0$, together with the identification of that $0$-element (without which any $3$-torsion element could take its place).
\end{proof}

After establishing the corresponding theorem in \cite{B1} establishing a group law on quadratic forms, Bhargava proceeds to Theorem 2, which establishes a group law on the $2\times 2\times 2$ cubes themselves, or rather on the subset of those that are ``projective,'' i.e.~correspond to triples of invertible ideals. This structure is easily replicated in our situation: it is only necessary to note that the product of two balanced triples of invertible ideals is balanced. In fact, a stronger result holds.

\begin{lem}
Let $(I_1,I_2,I_3)$ and $(J_1,J_2,J_3)$ be balanced triples of ideals of a quadratic ring $S$, with each $I_i$ invertible. Then the ideal triple $(I_1J_1,I_2J_2,I_3J_3)$ is also balanced.
\end{lem}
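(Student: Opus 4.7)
The plan is to verify conditions (a) and (b) of Definition \ref{defn:balanced} separately for the triple $(I_1J_1, I_2J_2, I_3J_3)$. Part (a) is immediate from the commutativity of $S$: one rewrites
\[
  (I_1J_1)(I_2J_2)(I_3J_3) \;=\; (I_1I_2I_3)(J_1J_2J_3) \;\subseteq\; S \cdot S \;=\; S,
\]
using condition (a) for each of the two given triples.

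The substantive part is condition (b). I would first prove a norm-multiplicativity lemma: if $I$ is an \emph{invertible} $S$-ideal and $J$ is any $S$-ideal, then
\[
  \Lambda^2(IJ) \tensor \Lambda^2 S \;=\; \Lambda^2 I \tensor \Lambda^2 J
\]
as rank-$1$ $R$-sublattices of $(\Lambda^2 S_K)^{\tensor 2}$. Granting this, tensoring the three identities indexed by $i = 1,2,3$ gives
\[
  (\Lambda^2 S)^{\tensor 3} \tensor \bigotimes_{i=1}^{3} \Lambda^2(I_iJ_i) \;=\; \bigotimes_{i=1}^{3} \Lambda^2 I_i \;\tensor\; \bigotimes_{i=1}^{3} \Lambda^2 J_i \;=\; (\Lambda^2 S)^{\tensor 3} \tensor (\Lambda^2 S)^{\tensor 3},
\]
where the last step uses that both $(I_1,I_2,I_3)$ and $(J_1,J_2,J_3)$ are balanced. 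Cancelling the common invertible factor $(\Lambda^2 S)^{\tensor 3}$---rank-$1$ $R$-lattices in a fixed $1$-dimensional $K$-vector space form a group under tensor product---yields condition (b) for the product triple.

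To prove the lemma, I would localize at each nonzero prime $\mathfrak{p}$ of $R$; equality of two rank-$1$ $R$-sublattices of a fixed $1$-dimensional $K$-vector space can be tested after this localization. The ring $S_\mathfrak{p}$ is a free $R_\mathfrak{p}$-module of rank $2$ and therefore semilocal, so the invertible $S_\mathfrak{p}$-ideal $I_\mathfrak{p}$ is principal, say $I_\mathfrak{p} = \alpha S_\mathfrak{p}$. The identity then reduces to the observation that multiplication by $\alpha$ on $S_K$ acts with determinant $N_{S_K/K}(\alpha)$, so $\Lambda^2(\alpha J_\mathfrak{p}) = N_{S_K/K}(\alpha)\cdot \Lambda^2 J_\mathfrak{p}$ and $\Lambda^2(\alpha S_\mathfrak{p}) = N_{S_K/K}(\alpha)\cdot \Lambda^2 S_\mathfrak{p}$, and both sides of the asserted equality collapse to $N_{S_K/K}(\alpha) \cdot \Lambda^2 S_\mathfrak{p} \tensor \Lambda^2 J_\mathfrak{p}$.

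The one nontrivial step is the local-principality reduction, for which invertibility of each $I_i$ is essential; invertibility of the $J_i$ is not needed. Everything else is bookkeeping with rank-$1$ lattices over a Dedekind domain.
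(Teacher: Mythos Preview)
Your proof is correct and takes a genuinely different route from the paper's. The paper does not localize; instead it invokes Corollary~\ref{cor:norm}, which expresses $I\bar I$ in terms of $\alpha(\Lambda^2 I)$ and the endomorphism ring $\End I$. After observing that $\End I_i = S$ (by invertibility) and that
\[
  \End J_i \subseteq \End(I_iJ_i) \subseteq \End(I_i^{-1}I_iJ_i) = \End J_i,
\]
the paper applies Corollary~\ref{cor:norm} to $I_iJ_i$, $I_i$, and $J_i$ separately, multiplies out $I_iJ_i\cdot\overline{I_iJ_i} = (I_i\bar I_i)(J_i\bar J_i)$, and intersects with $K$ to extract $\alpha(\Lambda^2(I_iJ_i)) = \alpha(\Lambda^2 I_i)\,\alpha(\Lambda^2 J_i)$ directly. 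Your localization-to-principal argument is more elementary in that it avoids both the conjugation involution and Corollary~\ref{cor:norm}, relying only on the fact that a rank-$2$ algebra over a local ring is semilocal and hence has trivial Picard group; it would work verbatim for invertible ideals in $R$-algebras of any finite rank. The paper's approach, by contrast, is specific to the quadratic setting but ties the result back into the narrative developed in Section~\ref{sec:quadideals}, and yields as a byproduct the equality $\End(I_iJ_i) = \End J_i$.
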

\begin{proof}
We clearly have
\[
  I_1J_1 \cdot I_2J_2 \cdot I_3J_3 = (I_1I_2I_3)(J_1J_2J_3) \subseteq S,
\]
establishing (a) of Definition \ref{defn:balanced}. For (b), the key is to use Corollary \ref{cor:norm} to get a handle on the exterior squares of the $I_iJ_i$. We have $\End I_i = S$; each $S_i = \End J_i$ is a quadratic ring with $S \subseteq S_i \subseteq S_K$. Then since
\[
  \End J_i \subseteq \End I_i J_i \subseteq \End I_i^{-1}I_i J_i = \End J_i,
\]
we see that $\End I_i J_i = S_i$ as well. Then
\[
  \frac{\alpha(\Lambda^2(I_iJ_i))}{\alpha(S_i)} S_i = I_iJ_i \cdot \ba{I_iJ_i} = I_i\ba{I_i} \cdot J_i\ba{J_i}
  = \alpha(\Lambda^2 I_i) S \cdot \frac{\alpha(\Lambda^2 J_i)}{\alpha(S_i)} S_i
  = \frac{\alpha(\Lambda^2 I_i) \alpha(\Lambda^2 J_i)}{\alpha(S_i)} S_i.
\]
Intersecting with $K$, we get
\[
  \alpha(\Lambda^2(I_iJ_i)) = \alpha(\Lambda^2 I_i) \alpha(\Lambda^2 J_i).
\]
We can now multiply and get
\[
  \prod_i \alpha(\Lambda^2(I_iJ_i)) = \prod_i \alpha(\Lambda^2 I_i) \cdot \prod_i \alpha(\Lambda^2 J_i) = R,
\]
so $(I_1J_1,I_2J_2,I_3J_3)$ is balanced.
\end{proof}
\begin{cor}[cf.~\cite{B1}, Theorems 2 and 12]\label{cor:box group}
The Bhargava boxes which belong to a fixed ring $S$ (determined by the quadratic form \eqref{eq:long ring}) and which are primitive (in the sense of having all three associated quadratic forms primitive) naturally form a group isomorphic to $(\Pic S)^2$.
\end{cor}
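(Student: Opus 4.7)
The plan is to combine the bijection of Theorem \ref{thm:box} with the multiplicative compatibility just established in the Lemma, reducing the corollary to a bookkeeping exercise on ideal classes. First, I would unwind what ``primitive Bhargava box belonging to $S$'' means on the ideal side: by Theorem \ref{thm:quadideals}, the associated quadratic forms $\phi_i$ are primitive iff the corresponding ideals $I_i$ are invertible, and having the characteristic polynomial \eqref{eq:long ring} equal that of $S$ pins down the oriented quadratic ring in which the triple lives. Thus the set of primitive Bhargava boxes in $S$ is in bijection with equivalence classes of balanced triples $(I_1,I_2,I_3)$ of invertible $S$-ideals, where the equivalence is scaling by $(\gamma_1,\gamma_2,\gamma_3)\in (S_K^\times)^3$ with $\gamma_1\gamma_2\gamma_3=1$. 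I would also record the observation already used in the preceding theorem: for invertible $I_i$, being balanced is equivalent to $I_1I_2I_3 = S$.

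Next I would build an explicit bijection with $(\Pic S)^2$. Send a class $[(I_1,I_2,I_3)]$ to the pair $([I_1],[I_2])\in (\Pic S)^2$; the third class $[I_3]$ is forced to be $([I_1][I_2])^{-1}$ because $I_1I_2I_3=S$. This map is clearly well-defined on equivalence classes. For injectivity, suppose $[I_i]=[J_i]$ for $i=1,2$; then also $[I_3]=[J_3]$, and we may write $J_i=\gamma_i I_i$ for some $\gamma_i\in S_K^\times$. Since $J_1J_2J_3=S=I_1I_2I_3$, we get $\gamma_1\gamma_2\gamma_3\in S^\times$, and absorbing this unit into $\gamma_1$ produces an equivalence. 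For surjectivity, start with any $(C_1,C_2)$ and set $C_3=(C_1C_2)^{-1}$; pick invertible representatives $I_i$ of $C_i$; then $I_1I_2I_3$ is a principal $S$-ideal $\gamma S$, and replacing $I_1$ by $\gamma^{-1}I_1$ gives a balanced triple mapping to $(C_1,C_2)$.

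Finally I would transport the group law. The Lemma shows that componentwise multiplication of two balanced triples of invertible ideals yields a balanced triple of invertible ideals, and this descends to equivalence classes because scaling one factor by $(\gamma_i)$ with $\prod\gamma_i=1$ and the other by $(\delta_i)$ with $\prod\delta_i=1$ scales the product by $(\gamma_i\delta_i)$ with $\prod\gamma_i\delta_i=1$. Under the bijection of the previous paragraph, this componentwise ideal product corresponds exactly to componentwise multiplication on $(\Pic S)^3$, which restricts to $(\Pic S)^2$ under our identification of the kernel of the product map with its first two coordinates. The identity element is the class of $(S,S,S)$, and inverses exist because $(I_1^{-1},I_2^{-1},I_3^{-1})$ is again a balanced primitive triple whose componentwise product with $(I_1,I_2,I_3)$ is $(S,S,S)$. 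Because the parameter space is already known to be in bijection with the subgroup of $(\Pic S)^3$ of triples with trivial product, and this subgroup is abstractly $(\Pic S)^2$, the corollary follows.

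I do not anticipate a serious obstacle: the main content is already encapsulated in the Lemma (stability of balancedness under componentwise multiplication in the invertible case), and all remaining work is a routine translation between equivalence classes of triples and triples of ideal classes. The one point requiring a little care is the verification that the scaling equivalence on triples exactly captures equality of ideal classes, which is where the constraint $\gamma_1\gamma_2\gamma_3=1$ together with $I_1I_2I_3=S$ is used to absorb a stray unit.
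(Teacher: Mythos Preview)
Your proposal is correct and is exactly the argument the paper has in mind; the paper states the corollary without proof, treating it as immediate from the Lemma together with the observation (already used in the preceding theorem) that a triple of invertible ideals is balanced iff $I_1I_2I_3=S$. Your careful check that the scaling equivalence on triples matches up with equality of ideal classes---via the unit-absorption step---is precisely the point the paper's Remark after Theorem \ref{thm:box} flags as subtle in the non-invertible case but which your argument shows goes through cleanly when all $I_i$ are invertible.
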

\begin{cor}
The Bhargava boxes which belong to a fixed ring $S$ naturally have an action by $(\Pic S)^2$.
\end{cor}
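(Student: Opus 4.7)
The plan is to transfer the group structure on primitive Bhargava boxes from Corollary \ref{cor:box group} to an action on the set of \emph{all} Bhargava boxes belonging to $S$, using the bijection of Theorem \ref{thm:box}. Concretely, given a primitive box $B'$ corresponding to a balanced triple $(J_1,J_2,J_3)$ of invertible ideals of $S$, and an arbitrary Bhargava box $B$ corresponding to a balanced triple $(I_1,I_2,I_3)$ of ideals of $S$, I define $B'\cdot B$ to be the Bhargava box corresponding to the componentwise product $(J_1I_1,\,J_2I_2,\,J_3I_3)$.

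The preceding lemma supplies the essential algebraic input: it asserts that $(J_1I_1,J_2I_2,J_3I_3)$ is again balanced, the hypothesis of invertibility on one triple being met precisely by the primitive triple $(J_i)$. Hence $B'\cdot B$ is a bona fide Bhargava box; since the product triple still consists of ideals of the same ring $S$, Theorem \ref{thm:box} tells us that the resulting box also belongs to $S$ (the coefficient polynomial \eqref{eq:long ring} is determined by the ring in which the balanced triple lives).

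The verification of the group-action axioms is then routine. Well-definedness on equivalence classes: replacing $(J_1,J_2,J_3)$ by $(\gamma_1J_1,\gamma_2J_2,\gamma_3J_3)$ with $\prod_i\gamma_i=1$, or $(I_1,I_2,I_3)$ by a similarly scaled triple, only rescales the product triple by scalars of product one. The identity element of $(\Pic S)^2$ corresponds under Corollary \ref{cor:box group} to the triple $(S,S,S)$, which acts trivially by construction. Associativity reduces to associativity of ideal multiplication in $S_K$. The essential content of the corollary is thus packaged into the preceding lemma, and I do not anticipate a serious obstacle beyond bookkeeping.
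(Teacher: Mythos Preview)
Your proposal is correct and is exactly the argument the paper has in mind: the corollary is stated without proof, immediately after the lemma on products of balanced triples, and your write-up simply unpacks that implication (transporting the group of primitive boxes, identified with $(\Pic S)^2$ via Corollary~\ref{cor:box group}, to an action on all boxes by componentwise ideal multiplication, with the lemma guaranteeing balancedness). The only cosmetic difference is that the paper's lemma labels the invertible triple $(I_i)$ and the arbitrary one $(J_i)$, whereas you have reversed the roles.
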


It is natural to think about what happens when the datum $\theta$ is removed from the Bhargava box. As one easily verifies, multiplying $\theta$ by a unit $u \in R^\cross$ is equivalent to multiplying the orientation $\alpha$ of $S$ by $u^{-1}$ while keeping the same ideals $I_i$. Accordingly, we have the following corollary, which we have chosen to state with a representation-theoretic flavor:
\begin{cor}
Balanced triples of ideals $(I_1,I_2,I_3)$ of types $\aa_1$, $\aa_2$, $\aa_3$ in an (unoriented) quadratic extension $S$ of type $\aa$, up to equivalence, are parametrized by $\GL(M_1) \cross \GL(M_2) \cross \GL(M_3)$-orbits of trilinear maps
\[
  \beta : M_1 \tensor M_2 \tensor M_3 \to \aa,
\]
where $M_i$ is the module $R \oplus \aa_i$, satisfying the nondegeneracy condition of Definition \ref{defn:box}.
\end{cor}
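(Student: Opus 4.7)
The plan is to deduce this corollary directly from Theorem \ref{thm:box} by systematically forgetting the orientation of $S$ on both sides of the bijection. By Theorem \ref{thm:lattices}, specifying the types $\aa_i$ of the lattices is equivalent to fixing representatives $M_i := R \oplus \aa_i$ up to isomorphism, so any Bhargava box with these types admits a representative on this fixed triple of lattices. Two such representatives $(\theta, \beta)$ and $(\theta', \beta')$ give isomorphic Bhargava boxes exactly when they lie in the same orbit of $\prod_i \GL(M_i)$, where $(g_1, g_2, g_3)$ acts on $\theta$ by multiplication by $\prod_i \det(g_i) \in R^\cross$ and on $\beta$ by the natural tensor action.

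Next I would invoke the observation immediately preceding the corollary: scaling $\theta$ by a unit $u \in R^\cross$ corresponds, under Theorem \ref{thm:box}, to scaling $\alpha$ by $u^{-1}$ while leaving the ideals $I_i$ untouched. Hence passing from oriented $(S,\alpha)$ to unoriented $S$ on the algebraic side amounts, on the Bhargava-box side, to an additional quotient by the $R^\cross$-scaling of $\theta$. The crucial point is that the character $\prod_i \det : \prod_i \GL(M_i) \to R^\cross$ is already surjective---for instance, the diagonal automorphism $\mathrm{diag}(u,1) \in \GL(R \oplus \aa_1)$ has determinant $u$ for every $u \in R^\cross$---so this additional $R^\cross$-quotient is already fully contained in the $\prod_i \GL(M_i)$-equivalence. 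Consequently each $\prod_i \GL(M_i)$-orbit on pairs $(\theta,\beta)$ realizes every possible $\theta$, so the datum $\theta$ can be suppressed and the classifying set reduces to $\prod_i \GL(M_i)$-orbits of trilinear maps $\beta$ alone, carrying over the partial-dual nondegeneracy condition from Definition \ref{defn:box} without change.

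The main obstacle is to verify that enlarging the equivalence on $\beta$---from the determinant-one subgroup (which fixes a given $\theta$) to the full group $\prod_i \GL(M_i)$---does not collapse ideal triples that should remain distinct. But by the second paragraph, the extra equivalences on $\beta$ correspond exactly to identifying $(S, \alpha, (I_i))$ with $(S, u^{-1}\alpha, (I_i))$ for units $u \in R^\cross$, which is precisely the identification required by ``unoriented $S$''. Thus no spurious collapses occur, and the stated bijection follows.
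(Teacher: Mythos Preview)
Your argument is correct and follows exactly the line the paper intends: the corollary is deduced from Theorem~\ref{thm:box} together with the observation (stated just before the corollary) that scaling $\theta$ by $u\in R^\cross$ corresponds to scaling the orientation $\alpha$ by $u^{-1}$, so that forgetting $\theta$ on the Bhargava-box side matches forgetting $\alpha$ on the ring side. Your third paragraph is the clean version of the argument and matches the paper's reasoning; one small remark is that the surjectivity of $\prod_i\det$ is not actually needed, since the extra $R^\cross$-action on $\theta$ alone already lets every $(\prod_i\GL(M_i)\times R^\cross)$-orbit on pairs $(\theta,\beta)$ hit every value of $\theta$, whence those orbits coincide with $\prod_i\GL(M_i)$-orbits on $\beta$.
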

These orbits do \emph{not} have a group structure. Indeed, the identifications cause a box and its inverse, under the group law of Corollary \ref{cor:box group}, to become identified.

\ignore{
Now the equivalence classes of balanced triples of invertible ideals form a group isomorphic to $(\Pic S)^2$, since $I_1$ and $I_2$ are arbitrary, and then $I_3$ is forced to equal $I_1^{-1}I_2^{-1}$. So we get an action of $P = (\Pic S)^2$ on the Bhargava boxes with associated ring $S$. One naturally wonders what the orbits are. The following theorem provides a pretty answer.
\begin{thm}
Two balanced triples $(I_1,I_2,I_3)$ and $(J_1,J_2,J_3)$ of ideals of a quadratic ring $S$ are in the same $P$-orbit if and only if
\begin{itemize}
  \item $\End I_i = \End J_i$ for each $i$ (equivalently, the corresponding quadratic forms have the same ``content,'' i.e.~gcd of their coefficients)
  \item $I_1I_2I_3 = J_1J_2J_3$.
\end{itemize}
\end{thm}
\begin{proof}
The $P$-invariance of $I_1I_2I_3$ is obvious. As for $\End I_i$, note that on multiplying by an invertible $P_i$ it can only grow, and on multiplying by $P_i^{-1}$ it can only grow again. So the heart of the proof is the converse, namely, that two triples $(I_1,I_2,I_3)$ and $(J_1,J_2,J_3)$ satisfying the conditions above are indeed in the same $P$-orbit.
\end{proof}
}

\begin{examp} \label{ex:bbz}
When $R = \ZZ$ (or more generally any PID), we can simplify the notation of a Bhargava box by taking each $M_i = \ZZ^2$, so that $\theta$ is without loss of generality the standard orientation $\Lambda^2(\ZZ^2)^{\tensor 3} \to^\sim \ZZ$, and $\beta$ is expressible as a box
\[
  \bbq{a}{b}{c}{d}{e}{f}{g}{h}
\]
of integers. The three forms $\phi_i$ are then obtained by slicing $\beta$ into two $2\times 2$ matrices and taking the determinant of a general linear combination as described in \cite{B1}, Section 2.1:
\[
  \phi_1(x,y) = -\det\left(
    x\begin{bmatrix}
      a & b \\ c & d
    \end{bmatrix} +
    y\begin{bmatrix}
      e & f \\ g & h
    \end{bmatrix}
  \right).
\]
We can now derive a balanced triple of ideals from any box of eight integers $a,b,\ldots,h$, subject only to the very mild condition that no two opposite faces should be linearly dependent. We recapitulate the boxes having the greatest significance in \cite{B1} and in the theory of quadratic forms generally:
\begin{enumerate}[(a)]
\item The boxes
\[
  \bbq{0}{1}{1}{0}{1}{0}{0}{D/4} \textand \bbq{0}{1}{1}{1}{1}{1}{1}{(D+3)/4}
\]
(for $D$ even and odd respectively), have as all three of their associated quadratic forms $x^2 - (D/4)y^2$ and $x^2 + xy - (D-1)/4\cdot y^2$ respectively, the defining form of the ring $S$ of discriminant $D$. They correspond to the balanced triple $(S,S,S)$. These are the ``identity cubes'' of \cite{B1}, equation (3).
\item The boxes
\[
  \bbq{0}{1}{1}{0}{a}{-b/2}{b/2}{-c} \textand \bbq{0}{1}{1}{0}{a}{(-b+1)/2}{(b+1)/2}{-c}
\]
(for $b$ even and odd respectively), have as two of their associated quadratic forms the conjugates
\[
  ax^2 + bxy + cy^2 \textand ax^2 - bxy + cy^2
\]
and as the third associated form the form $x^2 - (D/4)y^2$ or $x^2 + xy - (D-3)/4\cdot y^2$ defining the ring $S$ of discriminant $D = b^2 - 4ac$. These boxes express the fact that the triple
\[
  (S, I, \alpha(\Lambda^2 I)^{-1} \bar{I})
\]
is always balanced (compare Corollary \ref{cor:norm}). If $\gcd(a,b,c) = 1$, we also get that $I$ and $\bar{I}$ represent inverse classes in the class group and that, correspondingly, $ax^2 + bxy + cy^2$ and $ax^2 - bxy + cy^2$ are inverse under Gauss's composition law on binary quadratic forms.
\item The box
\[
  \bbq{1}{0}{0}{d}{0}{f}{g}{-h}
\]
has as associated quadratic forms
\begin{align*}
  \phi_1(x,y) &= -dx^2 + hxy + fgy^2 \\
  \phi_2(x,y) &= -gx^2 + hxy + dfy^2 \\
  \phi_3(x,y) &= -fx^2 + hxy + dgy^2.
\end{align*}
As Bhargava notes (\cite{B1}, p.~249), Dirichlet's simplification of Gauss's composition law was essentially to prove that any pair of primitive binary quadratic forms of the same discriminant can be put in the form $(\phi_1,\phi_2)$, so that the multiplication relation that we derive from this box,
\[
  \phi_1 \ast \phi_2 = -fx^2 - hxy + dgy^2 \text{ (or, equivalently, }dgx^2 + hxy - fy^2),
\]
encapsulates the entire multiplication table for the class group.
\item \label{item:5i box} For some examples not found in the classical theory of primitive forms, we consider the non-Dedekind domain $S = \ZZ[5i]$, whose ideal class semigroup was computed above (Example \ref{ex:5i}). Let us find all balanced triples that may be formed from the ideals
\[
  S = \ZZ[5i], \quad
  A = {\ZZ}{\<5, 1+i\>}, \quad
  B = \ZZ[i]
\]
of $S$. We compute
\[
  \alpha(\Lambda^2 S) = \ZZ, \quad \alpha(\Lambda^2 A) = \ZZ, \quad \alpha(\Lambda^2 B) = \frac{1}{5}\ZZ.
\]
For each triple $(I_1,I_2,I_3)$ of ideal class representatives, finding all balanced triples of ideals in these classes is equivalent to searching for all $\gamma \in S_K^\cross$ satisfying $\gamma \cdot I_1I_2I_3 \subseteq S$ which have the correct norm
\[
  \<N(\gamma)\> = \frac{1}{\alpha(\Lambda^2 I_1) \cdot \alpha(\Lambda^2 I_2) \cdot \alpha(\Lambda^2 I_3)}
\]
(the right side is an ideal of $\ZZ$, so $N(\gamma)$ is hereby determined up to sign, and as we are in a purely imaginary field, $N(\gamma) > 0$).

Using the class $B$ zero or two times, we get four balanced triples
\[
  (S,S,S), \quad (S,A,iA), \quad (S,B,5B), \textand (A,B,5B),
\]
each of which yields one Bhargava box. We get no balanced triples involving the ideal class $B$ just once; indeed, it is not hard to show in general that if two ideals of a balanced triple are invertible, so is the third.

The most striking case is $I_1 = I_2 = I_3 = B$, for here there are two multipliers $\gamma$ of norm $125$ that send $B^3 = \ZZ[i]$ into $\ZZ[5i]$, namely $10 + 5i$ and $10 - 5i$ (we could also multiply these by powers of $i$, but this does not change the ideal $B$). The balanced triples $(B,B,(10+5i)B)$ and $(B,B,(10-5i)B$ are inequivalent under scaling, although corresponding ideals belong to the same classes. Thus we get two inequivalent Bhargava boxes with the same three associated forms, namely
\[
  \bbq{1}{2}{2}{-1}{2}{-1}{-1}{-2} \textand \bbq{-1}{2}{2}{1}{2}{1}{1}{-2}.
\]
\item The triply symmetric boxes
\[
  \bbq{a}{b}{b}{c}{b}{c}{c}{d}
\]
correspond to balanced triples of ideals that all lie in the same class; those that are \emph{projective}---that is, whose associated forms are primitive---correspond to invertible ideal classes whose third power is the trivial class. This correspondence was used to prove estimates for the average size of the $3$-torsion of class groups in \cite{BV}. Our work suggests that similar methods may work for quadratic extensions of rings besides $\ZZ$.
\end{enumerate}
\end{examp}

\section{Another example: \texorpdfstring{$p$}{p}-adic rings}
\label{sec:p-adic}
\begin{examp}
It is instructive to look at the local rings $R = \ZZ_p$, where for simplicity we assume $p \geq 3$. Thanks to the large supply of squares, the corresponding field $K = \QQ_p$ has but five (unoriented) quadratic extensions, namely those obtained by adjoining a square root of $0$, $1$, $p$, $u$, and $pu$ where $u$ is an arbitrary non-square modulo $p$. The  quadratic ring extensions $S$ of $R$ then break up into five classes according to the corresponding extension $S_K$ of $K$. We will work out one representative case, namely the oriented ring extensions $S_n = \ZZ_p[p^n \sqrt{u}]$ corresponding to the unique unramified extension $L = K[\sqrt{u}]$ of degree $2$.

For any fractional ideal $I$ of $S_n$, we can pick an element of $I$ of minimal valuation (recalling that $L$ possesses a unique extension of the valuation on $K$) and scale it to be $1$. Then $S_n \subseteq I \subseteq S_0$, since $S_0 = \ZZ_p[\sqrt{u}]$ is the valuation ring, and it is easy to see that the only possible ideals are the subrings $S_0,S_1,\ldots, S_n$. In particular $S_n$ is the only invertible ideal class, and the class group $\Pic S$ is trivial.

We now enumerate the balanced triples that can be built out of these ideals. A balanced triple is formed from two sorts of data: three ideal classes $S_i$, $S_j$, $S_k$; and a scale factor $\gamma$ such that $\gamma S_i S_j S_k \subseteq S$ and
\[
  \<N(\gamma)\> = \frac{1}{\alpha(\Lambda^2 S_i) \alpha(\Lambda^2 S_j) \alpha(\Lambda^2 S_k)}.
\]
Computing
\[
  \alpha(\Lambda^2 S_i) = \alpha(1 \wedge p^i \sqrt{u}) = \<p^{i-n}\>,
\]
we get that $N(\gamma)$ has valuation $p^{3n-i-j-k}$ and in particular (since $L$ is unramified)
\begin{equation}\label{eq:padic mod 2}
  i + j + k \equiv n \mod 2.
\end{equation}
Write $3n-i-j-k = 2s$. Then $\gamma = p^{s} \gamma'$ where $\gamma' \in S_0^\cross$. To avoid needless repetition of arguments, we assume $i\leq j\leq k$, and then $\gamma S_i S_j S_k = p^s \gamma' S_i$. Let $\gamma' = a + b\sqrt{u}$ where $a,b \in \ZZ_p$. Since $p^s \gamma' S_i$ is clearly contained in $S_0$, the condition for it to lie in $S_n$ is that the irrational parts of its generators
\[
  p^s \gamma' \cdot 1 = p^s a + p^s b\sqrt{u} \textand
  p^s \gamma' \cdot p^i \sqrt{u} = p^{i+s}bu + p^{i+s}a\sqrt{u}
\]
are divisible by $p^n$, that is,
\[
  v_p(a) \geq n-s-i \textand v_p(b) \geq n-s.
\]
Since $a$ and $b$ cannot both be divisible by $p$, we must have $n-s-i\leq 0$, which can also be written as a sort of triangle inequality:
\begin{equation}\label{eq:tri ineq}
  (n-j) + (n-k) \geq n-i.
\end{equation}
If this holds, then the restrictions on $\gamma'$ are now merely that $p^{n-s} | b$, that is, $\gamma' \in S_t^\cross$ where $t = \max\{n-s, 0\}$. But if $\gamma'$ is multiplied by a unit in $S_i^\cross$, then the corresponding balanced triple is merely changed to an equivalent one. So the balanced triples are in bijection with the quotient $S_t^\cross/S_i^\cross$. Since the index of $S_i^\cross$ in $S_0^\cross$ is $p^{i-1}(p+1)$ $(i \geq 1)$, we have that there are precisely
\[
  B_{ijk} = \begin{cases}
    p^{i-t} & i \geq t > 0 \\
    p^{i-1} (p+1) & i > t = 0 \\
    1 & i=t=0
  \end{cases}
\]
classes of Bhargava boxes whose associated ideals are of the classes $S_i, S_j, S_k$, or equivalently, whose associated quadratic forms are
\[
  p^{n-i}x^2 -u p^{n+i}y^2, \quad p^{n-j}x^2 -u p^{n+j}y^2, \quad p^{n-k}x^2 -u p^{n+k}y^2.
\]

For beauty's sake let us examine one other angle of looking at the balanced triples. If we extend the notation $S_i$ ($i \in \ZZ$) to denote the $\ZZ_p$-module generated by $1$ and $p^i\sqrt{u}$ for every $i \in \ZZ$, then $S_i$ is an ideal of the ring $S_n$ exactly when $-n \leq i \leq n$. Of course $S_{-i} = p^{-i}\sqrt{u} \cdot S_i$ so we get no further ideal classes. But the admissible values of $i$, $j$, and $k$ now range in the stella octangula (Figure \ref{fig:stella}) formed by reflecting the graph of \eqref{eq:tri ineq} over the three coordinate planes, as well as the diagonal planes $i=j$, $i=k$, $j=k$. Indeed, the triples $(i,j,k)$ such that some scaling of $(S_i,S_j,S_k)$ is balanced are exactly the points of the lattice defined by \eqref{eq:padic mod 2} lying within the stella octangula. In such a case, one such balanced triple can be given by
\[
  (S_i,S_j,p^{s}S_k) \quad \text{or} \quad (S_i,S_j,p^{s}\sqrt{u} S_k)
\]
according as $(i,j,k)$ belongs to one or the other of the two tetrahedra making up the stella octangula.
\end{examp}

\section{Cubic algebras}
\label{sec:cubic}

The second main division of our paper has as its goal the parametrization of quartic algebras. We begin with cubic algebras, for there the parametrization is relatively simple and will also furnish the desired ring structure on the cubic resolvents of our quartic rings. The parametrization was done by Delone and Faddeev for cubic domains over $\ZZ$, by Gan, Gross, and Savin for cubic rings over $\ZZ$, and by Deligne over an arbitrary scheme (\cite{WQuartic}, p.~1074 and the references therein). Here we simply state and prove the result over a Dedekind domain, taking advantage of the construction in \cite{B3}, section 3.9.

\begin{thm}[cf.~\cite{B2}, Theorem 1; \cite{WQuartic}, Theorem 2.1; \cite{Poonen}, Proposition 5.1 and the references therein] \label{thm:cubic}
Let $R$ be a Dedekind domain. There is a canonical bijection between cubic algebras over $R$ and pairs consisting of a rank-2 $R$-lattice $M$ and a cubic map $\phi : M \to \Lambda^2 M$.
\end{thm}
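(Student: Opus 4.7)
My plan is to transcribe the classical Delone--Faddeev parametrization over $\ZZ$, using coordinates adapted to a decomposition $M = \bb_1 u_1 \oplus \bb_2 u_2$ of the rank-$2$ lattice and tracking the fractional-ideal memberships of the coefficients.

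\emph{Forward direction.} For a cubic algebra $C$, set $M := C/R$, which is a rank-$2$ lattice by the decomposition $C = R \oplus M$. Define $\phi : M \to \Lambda^2 M$ by $\phi(\tilde x) = \tilde x \wedge \widetilde{x^2}$ for any lift $x \in C$ of $\tilde x$; independence of the lift is immediate from $\widetilde{(x+r)^2} = \widetilde{x^2} + 2r\tilde x$, since $\tilde x \wedge r\tilde x = 0$. Choosing a decomposition $M = \bb_1 u_1 \oplus \bb_2 u_2$ and lifts $u_i \in C$, the expression $\phi(\beta_1 u_1 + \beta_2 u_2)$ is manifestly a cubic polynomial in $(\beta_1,\beta_2)$, so $\phi \in \Sym^3 M^* \tensor \Lambda^2 M$.

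\emph{Reverse direction.} Given $(M, \phi)$, fix a decomposition $M = \bb_1 u_1 \oplus \bb_2 u_2$ and write
\[
  \phi(\beta_1 u_1 + \beta_2 u_2) = (a\beta_1^3 + b\beta_1^2\beta_2 + c\beta_1\beta_2^2 + d\beta_2^3)(u_1 \wedge u_2)
\]
with $a \in \bb_1^{-2}\bb_2,\ b \in \bb_1^{-1},\ c \in \bb_2^{-1},\ d \in \bb_1\bb_2^{-2}$. Equip $C := R \oplus M$ with the generalized Delone--Faddeev multiplication
\begin{align*}
  (\beta_1 u_1)(\gamma_2 u_2) &= -ad\,\beta_1\gamma_2, \\
  (\beta_1 u_1)^2 &= -ac\,\beta_1^2 + b\beta_1\cdot(\beta_1 u_1) - a\beta_1\cdot(\beta_1 u_2), \\
  (\gamma_2 u_2)^2 &= -bd\,\gamma_2^2 + d\gamma_2\cdot(\gamma_2 u_1) - c\gamma_2\cdot(\gamma_2 u_2);
\end{align*}
the ideal memberships above ensure each product lies in $R \oplus M$ (for instance, $ad \in \bb_1^{-1}\bb_2^{-1}$, so $-ad\beta_1\gamma_2 \in R$). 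Commutativity is automatic; associativity reduces to three polynomial identities in $a,b,c,d$ that are the classical Delone--Faddeev identities and, crucially, involve no division by $2$, so they hold verbatim in any characteristic.

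\emph{Mutual inversion and main obstacle.} A direct computation with the multiplication table shows that the forward map applied to the constructed $C$ recovers $\phi$; conversely, applied to the pair extracted from a cubic ring $C$ (using lifts of $u_1,u_2$ to $C$), the reverse recipe reproduces the multiplication of $C$. The main technical obstacle is the \emph{canonicity} of the reverse construction, since a priori it depends on the decomposition of $M$. One resolves this either by a direct $\GL(M)$-equivariance check tracking the transformation of $(a,b,c,d)$ under a change of basis, or---more economically---by localizing at every prime $\pp$ of $R$, where $M_\pp$ is free and the classical Delone--Faddeev theorem supplies a canonical cubic $R_\pp$-algebra, and gluing by the functoriality of both maps in $(M,\phi)$. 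Beyond this functorial bookkeeping of fractional-ideal factors, the theorem follows from the classical identities.
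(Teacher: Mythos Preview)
Your approach is essentially the paper's: both define the forward map as $\tilde x \mapsto \tilde x \wedge \widetilde{x^2}$ on $C/R$ and reconstruct the ring from the cubic form via the Delone--Faddeev multiplication table, tracking fractional-ideal memberships of the coefficients. The paper's presentation differs only in that it \emph{derives} the table rather than quoting it---normalizing the lifts so that $\xi_1\xi_2 \in R$, reading four coefficients off $\phi$, and solving the associativity constraints for the remaining three---which has the side benefit that canonicity (independence of the decomposition of $M$) is automatic from uniqueness, whereas you defer it to a $\GL(M)$-equivariance or localization argument.

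One internal sign slip: with your table $u_1^2 = -ac + bu_1 - au_2$, your forward map gives $\phi(u_1) = u_1 \wedge (bu_1 - au_2) = -a\,(u_1\wedge u_2)$, not $+a\,(u_1\wedge u_2)$; the two directions as written differ by a global sign. Replace $\tilde x \wedge \widetilde{x^2}$ by $\widetilde{x^2}\wedge \tilde x$, or negate the form.
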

\begin{proof}
Given the cubic ring $C$, we let $M = C/R$ so $\aa = \Lambda^2 M \cong \Lambda^3 C$ is an ideal class. Consider the map $\tilde{\phi} : C \to \aa$ given by $x \mapsto 1 \wedge x \wedge x^2$. This is a cubic map, and if $x$ is translated by an element $a \in R$, the map does not change. Hence it descends to a cubic map $\phi : M \to \aa$. We will show that each possible $\phi$ corresponds to exactly one ring $C$.

Fix a decomposition $M = \aa_1 \txi_1 \oplus \aa_2 \txi_2$ of $M$ into ideals. Any $C$ can be written as $R \oplus M = R\cdot 1 \oplus \aa_1 \xi_1 \oplus \aa_2 \xi_2$ as an $R$-module, where the lifts $\xi_1$ and $\xi_2$ are unique up to adding elements of $\aa_1^{-1}$ and $\aa_2^{-1}$ respectively. Then the remaining structure of $C$ can be described by a multiplication table
\begin{align*}
  \xi_1^2 &= \ell + a\xi_1 + b\xi_2 \\
  \xi_1\xi_2 &= m + c\xi_1 + d\xi_2 \\
  \xi_2^2 &= n + e\xi_1 + f\xi_2.
\end{align*}
It should be remarked that this is not literally a multiplication table for $C$, but rather for the corresponding $K$-algebra $C_K = C \otimes_R K$, which does literally have $\{1, \xi_1, \xi_2\}$ as a $K$-basis. For $C$ to be closed under this multiplication, the coefficients must belong to appropriate ideals ($\ell \in \aa_1^{-2}$, $a \in \aa_1^{-1}$, etc.).

Note that the basis change $\xi_1 \mapsto \xi_1 + t_1$, $\xi_2 \mapsto \xi_2 + t_2$ ($t_i \in \aa_i^{-1}$) diminishes $c$ and $d$ by $t_2$ and $t_1$, respectively (as well as wreaking greater changes on the rest of the multiplication table). Hence there is a unique choice of the lifts $\xi_1$ and $\xi_2$ such that $c=d=0$.

We now examine the other piece of data that we are given, the cubic map $\phi$ describable in these coordinates as
\begin{align*}
  \lefteqn{ \phi(x \txi_1 + y \txi_2)} \\
  &= 1 \wedge (x\xi_1 + y\xi_2) \wedge (x\xi_1 + y\xi_2)^2 \\
  &= 1 \wedge (x\xi_1 + y\xi_2) \wedge ((\ell + a\xi_1 + b\xi_2)x^2 + mxy + (n + e\xi_1 + f\xi_2)y^2)) \\
  &= (bx^3 - ax^2y + fxy^2 - ey^3)(1 \wedge \xi_1 \wedge \xi_2).
\end{align*}
Thus, in our situation, specifying $\phi$ is equivalent to specifying the four coefficients $a$, $b$, $e$, and $f$. It therefore suffices to prove that, for each quadruple of values $a \in \aa_1^{-1}$, $b \in \aa_1^{-2}\aa_2$, $e \in \aa_1\aa_2^{-2}$, $f \in \aa_2^{-1}$, there is a unique choice of values $\ell$, $m$, $n$, completing the multiplication table. The only conditions on the multiplication table that we have not used are the associative laws $(\xi_1^2)\xi_2 = \xi_1(\xi_1\xi_2)$ and $\xi_1(\xi_2^2) = (\xi_1\xi_2)\xi_2$. Expanding out these equations reveals the unique solution $\ell = -bf$, $m = be$, $n = -ae$, which indeed belong to the correct ideals. So from the map $\phi$ we have constructed a unique cubic ring $C$.
\end{proof}

\begin{examp}
Here we briefly summarize the most important examples over $R = \ZZ$, where the cubic map $\phi : M \to \Lambda^2 M$ reduces to a binary cubic form $\phi : \ZZ^2 \to \ZZ$, up to the twisted action of the group $\GL_2 \ZZ$ by
\[
  \left(\begin{bmatrix}
    a & b \\ c & d
  \end{bmatrix}
  . \phi\right)(x,y) = \frac{1}{ad-bc} \cdot \phi(ax + cy, bx + dy).
\]
\begin{itemize}
  \item The trivial ring $\ZZ[\epsilon_1,\epsilon_2]/(\epsilon_1^2,\epsilon_1 \epsilon_2, \epsilon_2^2)$ corresponds to the zero form $0$.
  \item Rings which are not domains correspond to reducible forms (e.g.~$\ZZ\oplus \ZZ\oplus \ZZ$ corresponds to $xy(x+y)$), and rings which have nontrivial nilpotents correspond to forms with repeated roots.
  \item A monogenic ring $\ZZ[\xi]/(\xi^3 + a\xi^2 + b\xi + c)$ corresponds to a form $x^3 + a x^2 y + b x y^2 + c y^3$ with leading coefficient $1$. Accordingly a form which does not represent the value $1$ corresponds to a ring that is not monogenic; for instance, the form $5x^3 + 7y^3$ (which attains only values $\equiv 0, \pm 2$ mod $7$) corresponds to the subring $\ZZ[\sqrt[3]{5^2\cdot 7}, \sqrt[3]{5\cdot 7^2}]$ of the field $\QQ[\sqrt[3]{5^2 \cdot 7}] = \QQ[\sqrt[3]{5\cdot 7^2}]$, proving that this ring (which is easily checked to be the full ring of integers in this field) is not monogenic.
  \item If a form $\phi$ corresponds to a ring $C$, then the form $n \cdot \phi$ corresponds to the ring $\ZZ + nC$ whose generators are $n$ times as large. Hence the content $\ct(\phi) = \gcd(a,b,c,d)$ of a form $\phi(x,y) = ax^3 + bx^2y + cxy^2 + dy^3$ equals the \emph{content} of the corresponding ring $C$, which is defined as the largest integer $n$ such that $C \cong \ZZ + nC'$ for some cubic ring $C'$. The notion of content (which is also not hard to define for cubic algebras over general Dedekind domains) will reappear prominently in our discussion of quartic algebras (see section \ref{ss:ri2res}).
\end{itemize}
\end{examp}

\section{Quartic algebras}
\label{sec:quartic}

Our next task is to generalize Bhargava's parametrization of quartic rings with a cubic resolvent in \cite{B3}, and in particular to formalize the notion of a cubic resolvent. The concept was first developed as part of the theory of solving equations by radicals, in which it was noted that if $a$, $b$, $c$, and $d$ are the unknown roots of a quartic, then
\[
  ab+cd, \quad ac+bd, \textand ad+bc
\]
satisfy a cubic whose coefficients are explicit polynomials in those of the original quartic. Likewise, if $Q \supseteq \ZZ$ is a quartic ring embeddable in a number field, the useful resolvent map
\[
  x \mapsto (\sigma_1(x)\sigma_2(x) + \sigma_3(x)\sigma_4(x),
  \sigma_1(x)\sigma_3(x) + \sigma_2(x)\sigma_4(x), \sigma_1(x)\sigma_4(x) + \sigma_2(x)\sigma_3(x))
\]
lands in a cubic subring of $\CC \oplus \CC \oplus \CC$, where $\sigma_1,\ldots,\sigma_4$ are the four embeddings $Q \hookrightarrow \CC$. The question then arises of what the proper notion of a resolvent map is in case $Q$ is not a domain. In section 2.1 of \cite{B3}, Bhargava defines from scratch a workable notion of Galois closure of a ring, providing a rank-$24$ algebra in which the resolvent can be defined. Alternatively (section 3.9), Bhargava sketches a way of axiomatizing the salient properties of a resolvent map. It is the second method that we develop here.

\begin{defn}[cf.~\cite{WQuartic}, p.~1069]
Let $R$ be a Dedekind domain, and let $Q$ be a quartic algebra over $R$. A \emph{resolvent} for $Q$ consists of a rank-$2$ $R$-lattice $M$, an $R$-module morphism $\theta : \Lambda^2 M \to \Lambda^3 (Q/R)$, and a quadratic map $\phi : Q/R \to M$ such that there is an identity of biquadratic maps
\begin{equation}\label{eq:resolvent}
  x \wedge y \wedge xy = \theta(\phi(x) \wedge \phi(y))
\end{equation}
from $Q \cross Q$ to $\Lambda^3 (Q/R)$.

The resolvent $(M,\theta,\phi)$ is called \emph{minimal} if $\phi$ has full image $\phi(Q/R) = M$, that is, it is not really a map to any proper sublattice $M' \subseteq M$ (cf.~Definition \ref{defn:image}). The resolvent is called \emph{numerical} if $\theta$ is an isomorphism.
\end{defn}
Our minimal resolvent corresponds to the ring $R^{\mathrm{inv}}$ in Bhargava's treatment (\cite{B3}, p.~1337), while our numerical resolvents correspond to Bhargava's resolvent. The numerical resolvents are more suited to analytic applications, while the minimal resolvent has the advantage of being canonical (for nontrivial $Q$), as we prove below.
\begin{examp}\label{ex:1st res}
For the prototypical example of a resolvent, take $Q = R^{\oplus 4}$ and $C = R^{\oplus 3}$, and let $M = C/R$. Let $\theta$ identify the standard orientations on these lattices, and let $\phi$ be given by the roots
\[
  \phi(a,b,c,d) = (ab + cd, ac + bd, ad + bc)
\]
of the classical resolvent of the quartic $(x-a)(x-b)(x-c)(x-d)$. It is easy to check that this is a resolvent, which is both minimal and numerical. Many more examples can be derived from this (see Example \ref{ex:res z}).
\end{examp}

\subsection{Resolvent to ring}

Our first result is that the resolvent encapsulates the data of the ring:

\begin{thm}[cf.~\cite{B3}, Theorem 1 and Proposition 10; \cite{WQuartic}, Corollary 1.2] \label{thm:quartic}
  Let $\tilde Q$ and $M$ be $R$-lattices of ranks $3$ and $2$ respectively. Let $\theta : \Lambda^2 M \to \Lambda^3 \tilde Q$ be a morphism, and let $\phi : \tilde Q \to M$ be a quadratic map. Then there is a unique quartic ring $Q$ with an isomorphism $Q/R \cong \tilde{Q}$ such that $(M,\theta,\phi)$ is a resolvent for $Q$.
\end{thm}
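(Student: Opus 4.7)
The plan is to mimic the argument of Theorem \ref{thm:cubic} while following Bhargava's inverse construction from \cite{B3}, section 3.9: I would first normalize the lift of $\tilde Q$ into $Q$, then read off the entire multiplication table from the resolvent identity, and finally verify associativity.

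To set up, I choose decompositions
\[
  \tilde Q = \mathfrak a_1 \txi_1 \oplus \mathfrak a_2 \txi_2 \oplus \mathfrak a_3 \txi_3 \quad \text{and} \quad M = \mathfrak b_1 \eta_1 \oplus \mathfrak b_2 \eta_2.
\]
In these coordinates, $\phi$ becomes a pair $(p,q)$ of quadratic forms on $\tilde Q$ (its $\eta_1$- and $\eta_2$-components), whose coefficients lie in prescribed ideals, and $\theta$ becomes a scalar $T \in \mathfrak a_1 \mathfrak a_2 \mathfrak a_3 (\mathfrak b_1 \mathfrak b_2)^{-1}$. After choosing arbitrary lifts $\xi_i \in Q = R \oplus \tilde Q$ of the $\txi_i$, any candidate ring structure on $Q$ is encoded by the structure constants in
\[
  \xi_i \xi_j = c^0_{ij} + c^1_{ij}\xi_1 + c^2_{ij}\xi_2 + c^3_{ij}\xi_3,
\]
each constrained to a specific ideal.

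The lifts $\xi_i$ are determined by the $\txi_i$ only up to translation by $t_i \in \mathfrak a_i^{-1}$. A direct computation shows that the $c^k_{ij}$ transform linearly under such shifts; following Bhargava's choice, I would use the three parameters $t_1, t_2, t_3$ to annihilate three carefully chosen structure constants, so that the normalized lifts are uniquely determined by $\tilde Q$. In this normalized basis, the resolvent identity
\[
  \tilde x \wedge \tilde y \wedge \widetilde{xy} = \theta(\phi(x) \wedge \phi(y))
\]
becomes an identity of biquadratic maps on $\tilde Q$ valued in $\Lambda^3 \tilde Q \cong \mathfrak a_1 \mathfrak a_2 \mathfrak a_3 \cdot (\tilde\xi_1 \wedge \tilde\xi_2 \wedge \tilde\xi_3)$. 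Matching coefficients of each monomial in $\tilde x, \tilde y$ produces an $R$-linear system in the remaining $c^k_{ij}$ whose unique solution expresses each structure constant as an explicit polynomial in the entries of $p$, $q$, and $T$. One verifies term-by-term that each such polynomial lands in the prescribed ideal, since $p$, $q$, and $T$ do. This already proves uniqueness: any quartic ring $Q$ compatible with the resolvent data must, after normalizing lifts, satisfy the same linear system.

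The main obstacle, and the crux of the proof, will be showing that this multiplication is associative. The associativity relations $(\xi_a\xi_b)\xi_c = \xi_a(\xi_b\xi_c)$ translate into polynomial identities in the $c^k_{ij}$, hence into universal polynomial identities (with $\ZZ$-coefficients) in the entries of $p$, $q$, and $T$. These identities are precisely those verified by Bhargava's explicit calculation in \cite{B3}, section 3.9; since his formulas use only integer polynomial operations, the identities remain valid over any commutative base ring, and in particular over our Dedekind domain $R$. Combined with the preceding step, this yields the desired unique $Q$.
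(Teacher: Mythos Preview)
Your outline has a genuine gap in the step where you claim the resolvent identity determines all the structure constants. The identity
\[
  \tilde x \wedge \tilde y \wedge \widetilde{xy} \;=\; \theta\bigl(\phi(\tilde x)\wedge\phi(\tilde y)\bigr)
\]
lives in $\Lambda^3(Q/R)$ and only sees $\widetilde{xy}$, the image of $xy$ modulo $R$. Consequently the constant terms $c^0_{ij}$ do not appear in it at all, and no amount of coefficient-matching will produce them. What the identity actually gives (this is the paper's system \eqref{eq:c-lam}) is the $c^k_{ij}$ for $k\geq 1$, and only up to the three-parameter translation ambiguity you already identified---so after your normalization, those are indeed pinned down, but the six constants $c^0_{ij}$ remain completely free at this stage.

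The paper recovers the $c^0_{ij}$ from the associative law itself: certain components of $(\xi_i\xi_j)\xi_k-\xi_i(\xi_j\xi_k)$ are linear in a single $c^0_{ij}$, and one \emph{defines} $c^0_{ij}$ so as to make those vanish. The real work is then showing that the several expressions one obtains for each $c^0_{ij}$ agree, and that the remaining associativity relations hold; the paper organizes this via the Pl\"ucker identities satisfied by the $\lambda^{ij}_{k\ell}=\theta(\mu_{ij}\wedge\mu_{k\ell})$ (diagram \eqref{diag:plu}), and finishes the constant components with a short trick rather than brute force. Your appeal to Bhargava's computation in \cite{B3} is legitimate in principle---the identities are integer-coefficient and do transport to any base ring---but that computation has exactly this two-step structure (define $c^0_{ij}$ via part of associativity, then verify the rest), and your write-up collapses the two steps. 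You should also note explicitly that Bhargava's formulas for the $c^0_{ij}$ are denominator-free, since other formulas in \cite{B1,B3} do carry factors of $1/2$.
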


\begin{proof}
Write $\tilde Q = \aa_1\txi_1 \oplus \aa_2\txi_2 \oplus \aa_3\txi_3$ as usual. The ring $Q$ will of course be $R \oplus \aa_1\xi_1 \oplus \aa_2\xi_2 \oplus \aa_3\xi_3$ as an $R$-module, with a multiplication table
\[
  \xi_i\xi_j = c_{ij}^0 + \sum_{k=1}^3 c_{ij}^k \xi_k
\]
where $c_{ij}^0 \in \aa_i^{-1}\aa_j^{-1}$ and $c_{ij}^k \in \aa_i^{-1}\aa_j^{-1}\aa_k$. The $18$ coefficients $c_{ij}^k$ are subject to the expansion of the relation \eqref{eq:resolvent}:
\begin{equation}\label{eq:xpresolvent}
  \(\sum_i x_i\txi_i\) \wedge \(\sum_j y_j\txi_j\) \wedge \(\sum_{i,j,k} x_i y_j c_{ij}^k \txi_k\) = \theta\(\phi\(\sum_i x_i\txi_i\) \wedge \phi\(\sum_j y_j\txi_j\)\).
\end{equation}
Write
\[
  \phi(x_1\xi_1 + x_2\xi_2 + x_3\xi_3) = \sum_{1\leq i \leq j \leq 3} \mu_{ij} x_i x_j
\]
where $\mu_{ij} \in \aa_i^{-1}\aa_j^{-1}M$. Then define
\[
  \lambda^{ij}_{k\ell} = \theta(\mu_{ij} \wedge \mu_{k\ell}) \in \aa_1\aa_2\aa_3\aa_i^{-1}\aa_j^{-1}\aa_k^{-1}\aa_\ell^{-1}.
\]
We can now expand both sides of \eqref{eq:xpresolvent} as polynomials in the $x$'s and $y$'s times $\txi_1 \wedge \txi_2 \wedge \txi_3$, getting
\[
  \begin{vmatrix}
    x_1 & y_1 & \sum_{i,j} c_{ij}^1 x_i y_j \\
    x_2 & y_2 & \sum_{i,j} c_{ij}^2 x_i y_j \\
    x_3 & y_3 & \sum_{i,j} c_{ij}^3 x_i y_j
  \end{vmatrix}
  = \sum_{i\leq j} \sum_{k\leq \ell} \lambda^{ij}_{k\ell} x_i x_j y_k y_\ell,
\]
and equate coefficients of each biquadratic monomial $x_i x_j y_k y_\ell$. Due to the skew-symmetry of each side, all terms involving $x_i^2 y_i^2$ or $x_ix_jy_iy_j$ cancel, and the remaining $30$ equations group into $15$ matched pairs. They are summarized as follows, where $(i,j,k)$ denotes any permutation of $(1,2,3)$ and $\epsilon = \pm 1$ its sign:
\begin{equation}\label{eq:c-lam}
\begin{aligned}
  c_{ii}^j &= -\epsilon \lambda^{ii}_{ik} \\
  c_{ij}^k &= \epsilon \lambda^{jj}_{ii} \\
  c_{ij}^j - c_{ik}^k &= \epsilon \lambda^{jk}_{ii} \\
  c_{ii}^i - c_{ij}^j - c_{ik}^k &= \epsilon \lambda^{ij}_{ik}.
\end{aligned}
\end{equation}
At first glance it may seem that one can add a constant $a$ to $c_{ij}^j$ and $c_{ij}^k$, while adding $2a$ to $c_{ii}^i$, to derive a three-parameter family of solutions from a single one; but this is merely the transformation induced by the change of lift $\xi_i \mapsto \xi_i + a$ for $\txi_i$. So there is essentially only one solution. (It could be normalized by taking e.g{.} $c_{12}^1 = c_{23}^2 = c_{31}^3 = 0$, although we do not use this normalization here, preferring to save time later by keeping the indices $1$, $2$, and $3$ in complete symmetry.)

The constant terms $c_{ij}^0$ of the multiplication table are as yet undetermined. They must be deduced from the associative law. There are several ways to compute each $c_{ij}^0$, and to prove that they agree, along with all the other relations implied by the associative law, is the final step in the construction of the quartic ring $Q$. Our key tool is the \emph{Pl\"ucker relation} relating the wedge products of four vectors in a $2$-dimensional space:
\[
  (a \wedge b)(c \wedge d) + (a \wedge c)(d \wedge b) + (a \wedge d)(b \wedge c) = 0,
\]
or, as we will use it,
\[
  \lambda^{aa'}_{bb'} \lambda^{cc'}_{dd'} + \lambda^{aa'}_{cc'} \lambda^{dd'}_{bb'} + \lambda^{aa'}_{dd'} \lambda^{bb'}_{cc'} = 0.
\]
To give succinct names to these relations among the $\lambda$'s, note that $aa',\ldots, dd'$ are four of the six unordered pairs that can be formed from the symbols $1$, $2$, and $3$, and the relation is nontrivial only when these four pairs are distinct. Consequently we denote it by $\Pl(ee',f\!f')$, where $ee'$ and $f\!f'$ are the two pairs that do not appear in it. Then $\Pl(ee',f\!f')$ as a polynomial in the $\lambda$'s is unique up to sign, and we will never have occasion to fix a sign convention.

We are now ready to derive the associative law from the Pl\"ucker relations. Of course this is a task that could be left to a computer, but since we will soon be deriving the Pl\"ucker relations from the associative law, we find it advisable to present the process at least in summary form. Here it is:
\begin{equation}\label{diag:plu}
\begin{gathered}{}
  [(\xi_i\xi_i)\xi_j - (\xi_i\xi_j)\xi_i]_k = \Pl(jk,kk) \\
  [(\xi_i\xi_j)\xi_k - (\xi_i\xi_k)\xi_j]_i = \Pl(ij,ik) \\
  \xymatrix{
    [(\xi_i\xi_i)\xi_j - (\xi_i\xi_j)\xi_i]_j \ar@{-}[d]^{\Pl(jj,kk)} &
    [(\xi_i\xi_j)\xi_i - (\xi_i\xi_i)\xi_j]_i \ar@{-}[r]^{\Pl(ij,kk)} \ar@{-}[d]^{\Pl(ik,jk)} &
    [(\xi_i\xi_j)\xi_k - (\xi_j\xi_k)\xi_i]_k \ar@{-}[d]^{\Pl(ik,jk)} \\
    [(\xi_i\xi_i)\xi_k - (\xi_i\xi_k)\xi_i]_j &
    [(\xi_i\xi_j)\xi_j - (\xi_j\xi_j)\xi_i]_j \ar@{-}[r]^{\Pl(ij,kk)} &
    [(\xi_i\xi_j)\xi_k - (\xi_j\xi_k)\xi_i]_k
  }
\end{gathered}
\end{equation}
And here is the explanation:
\begin{itemize}
\item The notation $[\omega]_i$ denotes the coefficient of $\xi_i$ when $\omega$ is expressed in terms of the basis $\{1,\xi_1,\xi_2,\xi_3\}$.
\item Each of the first two equations is a direct calculation. For instance:
\begin{align*}
  \lefteqn{[(\xi_i\xi_i)\xi_j - (\xi_i\xi_j)\xi_i]_k} \\
  &= [(c_{ii}^0 + c_{ii}^i\xi_i + c_{ii}^j\xi_j + c_{ii}^k\xi_k)\xi_j
     -(c_{ij}^0 + c_{ij}^i\xi_i + c_{ij}^j\xi_j + c_{ij}^k\xi_k)\xi_i]_k \\
  &= c_{ii}^i c_{ij}^k + c_{ii}^j c_{jj}^k + c_{ii}^k c_{jk}^k - c_{ij}^i c_{ii}^k - c_{ij}^j c_{ij}^k - c_{ij}^k c_{ik}^k \\
  &= (c_{ii}^i - c_{ij}^j - c_{ik}^k)c_{ij}^k + c_{ii}^k(c_{jk}^k - c_{ij}^i) + c_{ii}^j c_{jj}^k \\
  &= \epsilon(\lambda^{ij}_{ik} \lambda^{jj}_{ii} - \lambda^{ii}_{ij}\lambda^{ik}_{jj} + \lambda^{ii}_{ik} \lambda^{jj}_{ij}) \\
  &= \Pl(jk,kk).
\end{align*}
\item The two lower diagrams show the instances of the associative law that produce a summand of $c_{ii}^0$ or $c_{ij}^0$, respectively. Each node in the diagrams yields a formula for $c_{ii}^0$ or $c_{ij}^0$ (having no denominator, and consequently belonging to the correct ideal $\aa_i^{-2}$ resp.~$\aa_i^{-1}\aa_j^{-1}$); and where two nodes are joined by a line, the \emph{difference} between the two corresponding formulas is expressible as a Pl\"ucker relation.
\end{itemize}
We have now proved all of the associative law except the constant terms; that is, we now have that $(xy)z - x(yz) \in R$ for all $x,y,z \in Q$. Attacking the constant terms in the same manner as above leads to considerably heavier computations, which could be performed by computer (compare \cite{B3}, top of p.~1343). Alternatively, we may use the following trick. Let $i,j,k\in \{1,2,3\}$ be any indices, and let $h \in \{1,2,3\}$ be an index distinct from $k$. Then using the already-proved $\xi_h$-component of the associative law,
\begin{align*}
  \xi_i(\xi_j\xi_k) - \xi_j(\xi_i\xi_k)
  &= [\xi_h(\xi_i(\xi_j\xi_k)) - \xi_h(\xi_j(\xi_i\xi_k))]_h \\
  &= [(\xi_h\xi_i)(\xi_j\xi_k) - (\xi_h\xi_j)(\xi_i\xi_k)]_h \\
  &= [((\xi_h\xi_i)\xi_j)\xi_k - ((\xi_h\xi_j)\xi_i)\xi_k]_h.
\end{align*}
This last is necessarily zero, since it consists of the number $(\xi_h\xi_i)\xi_j - (\xi_h\xi_j)\xi_i \in R$ multiplied by $\xi_k$, and thus has no $\xi_h$-component.
\end{proof}

\subsection{Ring to resolvent}
\label{ss:ri2res}

Conversely, we will now study all possible resolvents of a given quartic ring $Q$. There is one case in which this problem takes a striking turn: the \emph{trivial} ring $Q = R[\aa_1\epsilon_1,\aa_2\epsilon_2,\aa_3\epsilon_3]/\sum_{i,j}(\aa_i\aa_j\epsilon_i\epsilon_j)$ where all entries of the multiplication table are zero. Here $\phi$ can be an arbitrary map to a $1$-dimensional sublattice of $M$, or alternatively $M$ and $\phi$ can be chosen arbitrarily while $\theta = 0$. For all other quartic rings, the family of resolvents is much smaller, as we will now prove.
\begin{thm}[cf.~\cite{B3}, Corollary 18] \label{thm:content}
Let $Q$ be a nontrivial quartic $R$-algebra. Then
\begin{enumerate}[$(a)$]
\item $Q$ has a unique minimal resolvent $(M_0, \theta_0, \phi_0)$;
\item we have $\theta_0(\Lambda^2 M_0) = \cc \cdot \Lambda^3 (Q/R)$, where $\cc$ is the ideal (called the \emph{content} of $Q$) characterized by the following property: For each ideal $\aa \subseteq R$, there exists a quartic $R$-algebra $Q'$ such that $Q \cong R + \aa Q'$ if and only if $\aa | \cc$;
\item all other resolvents $(M,\theta,\phi)$, up to isomorphism, are found by extending $\theta_0$ linearly to $\Lambda^2 M$, where $M$ is a lattice with $[M:M_0]\mid\cc$, and taking $\phi = \phi_0$;
\item the numerical resolvents arise by taking $[M:M_0]=\cc$ in the preceding.
\end{enumerate}
\end{thm}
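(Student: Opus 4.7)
The approach is to invert the construction in the proof of Theorem \ref{thm:quartic}. That proof showed how, in coordinates, the scalars $\lambda^{ij}_{k\ell} = \theta(\mu_{ij} \wedge \mu_{k\ell})$ determine and, via \eqref{eq:c-lam}, are determined by the structure constants $c_{ij}^k$ of $Q$. Starting from $Q$ alone, I would fix a decomposition $Q/R = \aa_1 \txi_1 \oplus \aa_2 \txi_2 \oplus \aa_3 \txi_3$, read off the $c_{ij}^k$, and then define the $\lambda^{ij}_{k\ell} \in K$ by solving \eqref{eq:c-lam}. A short computation shows that each expression on the right of \eqref{eq:c-lam} is invariant under the lift ambiguity $\xi_i \mapsto \xi_i + t_i$, so the $\lambda$'s are intrinsic to $Q$ together with its chosen decomposition.

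For part (a), the Pl\"ucker identities among the $\lambda$'s, which appeared in diagram \eqref{diag:plu} as consequences of the associative law, are exactly the conditions for the 15 scalars to arise as pairwise wedges of six vectors $\mu_{ij}$ in a rank-$2$ $K$-space $M_K$ equipped with a trivialization of $\Lambda^2 M_K$. The hypothesis that $Q$ is nontrivial forces some $\lambda^{ij}_{k\ell} \neq 0$ (otherwise \eqref{eq:c-lam} makes all $c_{ij}^k$ removable by a change of lift, collapsing $Q$ to the trivial algebra), so the $\mu_{ij}$ are not all parallel. I then set $M_0 := \sum_{i \leq j} \aa_i\aa_j \mu_{ij}$ inside $M_K$, let $\phi_0$ be the quadratic map with coefficients $\mu_{ij}$, and take $\theta_0 : \Lambda^2 M_0 \to \Lambda^3(Q/R)$ to be the restriction of the chosen trivialization. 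The resolvent identity \eqref{eq:resolvent} unpacks exactly to \eqref{eq:c-lam}; uniqueness is automatic, since any other minimal resolvent produces the same $\lambda$'s and hence the same $\mu_{ij}$ up to the unique $\GL(M_K)$ identifying its lattice with $M_0$.

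For parts (c) and (d), any resolvent $(M, \theta, \phi)$ restricts on $\phi(Q/R) \subseteq M$ to a minimal one, which by the uniqueness just proved is isomorphic to $(M_0, \theta_0, \phi_0)$; hence $\phi = \phi_0$ and $M \supseteq M_0$. Tracking how $\Lambda^2 M_0$ sits inside $\Lambda^2 M$ via the index, the requirement $\theta(\Lambda^2 M) \subseteq \Lambda^3(Q/R)$ translates exactly into $[M:M_0] \mid \cc$, where $\cc$ is defined by $\theta_0(\Lambda^2 M_0) = \cc \cdot \Lambda^3(Q/R)$, and the surjective (numerical) case becomes the equality $[M:M_0] = \cc$.

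The main obstacle is part (b), identifying $\cc$ with the content. In one direction, if $Q \cong R + \aa Q'$, the decomposition of $Q/R$ inherited from that of $Q'/R$ scales each coefficient ideal $\aa_i$ by $\aa$; chasing this through \eqref{eq:c-lam} and the construction of $M_0$ and $\theta_0$ should give $\cc(Q) = \aa \cdot \cc(Q')$ and hence $\aa \mid \cc(Q)$. The converse is the harder direction: given $\aa \mid \cc$, the $\aa$-divisibility of $\theta_0(\Lambda^2 M_0)$ should let me shrink the coefficient ideals by $\aa^{-1}$, producing candidate structure constants for a quartic algebra $Q'$ with $Q = R + \aa Q'$, whose associativity is then automatic via Theorem \ref{thm:quartic}. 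The delicate step is converting $\aa$-divisibility of the single rank-$1$ image $\theta_0(\Lambda^2 M_0)$ into sufficient divisibility of every individual $\lambda^{ij}_{k\ell}$ for the rescaling of $Q'$ to make sense ideal-theoretically.
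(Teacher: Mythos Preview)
Your approach is essentially the paper's: recover the $\lambda^{ij}_{k\ell}$ from the multiplication table via \eqref{eq:c-lam}, use the Pl\"ucker relations from \eqref{diag:plu} to lift them to vectors $\mu_{ij}$ in a $2$-dimensional $K$-space, set $M_0 = \sum_{i\le j}\aa_i\aa_j\mu_{ij}$, and then classify all resolvents as superlattices $M\supseteq M_0$ satisfying $\theta_0(\Lambda^2 M)\subseteq \Lambda^3(Q/R)$.

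The only place you hesitate---the ``delicate step'' in (b)---is in fact immediate from your own definition of $M_0$. Since $M_0 = \sum_{i\le j}\aa_i\aa_j\mu_{ij}$, one has
\[
  \theta_0(\Lambda^2 M_0) \;=\; \sum_{i\le j,\,k\le\ell} \aa_i\aa_j\aa_k\aa_\ell\,\theta_0(\mu_{ij}\wedge\mu_{k\ell})
  \;=\; \Bigl(\sum_{i\le j,\,k\le\ell}\lambda^{ij}_{k\ell}\,\aa_i\aa_j\aa_k\aa_\ell\Bigr)(\txi_1\wedge\txi_2\wedge\txi_3),
\]
so the content is literally the ideal $\cc = \sum \lambda^{ij}_{k\ell}\aa_i\aa_j\aa_k\aa_\ell\,\aa_1^{-1}\aa_2^{-1}\aa_3^{-1}$. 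The containment $\cc\subseteq\aa$ is then \emph{by definition} equivalent to every individual summand lying in $\aa$, i.e.\ to every $\lambda^{ij}_{k\ell}$ lying in $\aa\cdot\aa_1\aa_2\aa_3\aa_i^{-1}\aa_j^{-1}\aa_k^{-1}\aa_\ell^{-1}$. That is exactly the ideal-membership needed for the rescaled structure constants to define a ring $Q'$ on $R\oplus\aa^{-1}\aa_1\xi_1\oplus\aa^{-1}\aa_2\xi_2\oplus\aa^{-1}\aa_3\xi_3$ with $Q = R+\aa Q'$. There is no passage from ``divisibility of a single rank-$1$ image'' to ``divisibility of individual $\lambda$'s'' to worry about: the rank-$1$ image is already presented as the sum of those individual pieces.
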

\begin{proof}
Write $Q = R \oplus \aa_1\xi_1 \oplus \aa_2\xi_2 \oplus \aa_3\xi_3$. The multiplication table can be encoded in a family of $c_{ij}^k$'s, from which the fifteen values $\lambda^{ij}_{k\ell}$ are determined through \eqref{eq:c-lam}. These $\lambda^{ij}_{k\ell}$ satisfy the fifteen Pl\"ucker relations by \eqref{diag:plu}. It then remains to construct the target module $M$, the map $\theta$, and the vectors $\mu_{ij} \in \aa_i^{-1}\aa_j^{-1}M$ such that their pairwise exterior products $\mu_{ij} \wedge \mu_{k\ell}$ have, via $\theta$, the specified value $\lambda^{ij}_{k\ell}$.

The six $\mu_{ij}$ are in complete symmetry at this point, and it will be convenient to denote $\mu_{ij}$ by $\mu_x$, where $x$ runs over $\{11,12,13,22,23,33\}$ or, if you prefer, $\{1,2,3,4,5,6\}$. Likewise we write each $\lambda^{ij}_{k\ell}$ as $\lambda^x_y$ or simply $\lambda_{xy}$.

We first tackle the problem over $K$. Let $V$ be an abstract $K$-vector space of dimension $2$. We construct vectors $\mu_1,\ldots, \mu_n$ whose exterior products are proportional to the $\lambda$'s as follows. Some $\lambda_{xy}$ is nonzero, without loss of generality $\lambda_{12}$. Let $(\mu_1,\mu_2)$ be a basis of $V$. Then, for $3 \leq x \leq 6$, take
\[
  \mu_x = \frac{-\lambda_{2x}\mu_1 + \lambda_{1x}\mu_2}{\lambda_{12}}
\]
to give the products $\mu_1 \wedge \mu_x$ and $\mu_2 \wedge \mu_x$ the desired values. The $\lambda_{xy}$ with $3 \leq x < y \leq 6$ have not been used, but their values were forced by the Pl\"ucker relations anyway, so we have a system of $\mu_x$ such that
\[
  \mu_x \wedge \mu_y = \frac{\lambda_{xy}}{\lambda_{12}} \cdot \mu_1 \wedge \mu_2.
\]
Moreover, these are the \emph{only} $\mu_x \in V$ with this property, up to $\GL(V)$-trans\-formations.

Now define a quadratic map $\phi_0 : Q/R \to V$ by
\[
  \phi_0(x_1\xi_1 + x_2\xi_2 + x_3\xi_3) = \sum_{i \leq j} \mu_{ij} x_i x_j
\]
and a linear map $\theta_0 : \Lambda^2 V \to \Lambda^3(Q/R)\tensor_R K$ by
\[
  \theta_0(\mu_1\wedge \mu_2) = \lambda_{12} (\xi_1 \wedge \xi_2 \wedge \xi_3).
\]
We have that $(V,\theta_0,\phi_0 \tensor K)$ is the unique resolvent of the quartic $K$-algebra $Q \tensor_R K$. Resolvents for $Q$ are now in bijection with lattices $M \subseteq V$ such that
\begin{equation}\label{eq:allres}
  M \supseteq \phi_0(Q/R) \textand \theta_0(\Lambda^2 M) \subseteq \Lambda^3(Q/R).
\end{equation}
There is now clearly at most one minimal resolvent, gotten by taking $M$ to be the image $M_0 = \phi(Q/R)$. We have
\begin{align*}
  \theta_0(\Lambda^2 M_0) &= \theta_0\left(\sum_{i,j,k,\ell} \aa_i \aa_j \aa_k \aa_\ell \cdot \mu_{ij} \wedge \mu_{k\ell}\right) \\
  &= \left(\sum_{i,j,k,\ell} \lambda^{ij}_{k\ell} \aa_i \aa_j \aa_k \aa_\ell\right) \xi_1 \wedge \xi_2 \wedge \xi_3 = \cc \Lambda^3(Q/R),
\end{align*}
where
\[
  \cc = \sum_{i,j,k,\ell} \lambda^{ij}_{k\ell} \aa_i \aa_j \aa_k \aa_\ell \aa_1^{-1} \aa_2^{-1} \aa_3^{-1}.
\]
The ideal in which $\lambda^{ij}_{k\ell}$ is constrained to live is $\aa_1\aa_2\aa_3\aa_i^{-1}\aa_j^{-1}\aa_k^{-1}\aa_\ell^{-1}$; so $\cc \subseteq R$ and there is a unique minimal resolvent, proving (a).

If $\aa \supseteq \cc$ for some $\aa \subseteq R$, we can replace each of the three $\aa_i$ with $\aa^{-1}\aa_i$ without changing the validity of the $\lambda$-system. This means there is an extension ring $Q' = R \oplus \aa^{-1} \aa_1\xi_1 \oplus \aa^{-1} \aa_2\xi_2 \oplus \aa^{-1} \aa_3\xi_3$ with the same multiplication table as $Q$, and we see that $Q = R + \aa Q'$. Conversely, given such a $Q'$, we write its multiplication table with respect to the basis $Q'/R = \aa^{-1} \aa_1\xi_1 \oplus \aa^{-1} \aa_2\xi_2 \oplus \aa^{-1} \aa_3\xi_3$ and get that $\lambda^{ij}_{k\ell} \in \aa\aa_1\aa_2\aa_3\aa_i^{-1}\aa_j^{-1}\aa_k^{-1}\aa_\ell^{-1}$, so $\cc \subseteq \aa$. This proves (b).

Finally, the relation $\theta_0(\Lambda^2 M_0) = \cc \Lambda^3(Q/R)$ allows us to rewrite \eqref{eq:allres} as
\[
  M \supseteq M_0 \textand \Lambda^2 M \subseteq \cc \Lambda^2 M_0.
\]
Now (c) is obvious. A numerical resolvent occurs when $\theta_0(\Lambda^2 M) = \Lambda^3(Q/R)$, so (d) is obvious as well.
\end{proof}

Bhargava proved (\cite{B3}, Corollary 4) that the number of (numerical) resolvents of a quartic ring over $\ZZ$ is the sum of the divisors of its content. Likewise, we now have:
\begin{cor}
If $\cc \neq 0$, then the numerical resolvents of $Q$ are in noncanonical bijection with the disjoint union
\[
  \coprod_{R\supseteq \aa \supseteq \cc} R/\aa.
\]
\end{cor}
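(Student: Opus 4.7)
My plan is to invoke Theorem~\ref{thm:content}(d), which identifies numerical resolvents with overlattices $M$ of $M_0$ inside $V := M_0 \otimes_R K$ whose exterior square is $\cc^{-1}\Lambda^2 M_0$, and then to put each such $M$ into a normal form depending on a fixed decomposition of $M_0$. Using Theorem~\ref{thm:lattices}, I would write $M_0 = \bb_1 e_1 \oplus \bb_2 e_2$; this noncanonical choice is what will be reflected in the ``noncanonical'' aspect of the final bijection. Let $\pi_1 : V \to K$ denote the first-coordinate projection. For an overlattice $M$, the image $\pi_1(M)$ is a fractional ideal containing $\bb_1$, so $\pi_1(M) = \aa(M)^{-1} \bb_1$ for a unique $\aa(M) \subseteq R$, and a short exterior-square calculation (using $\Lambda^2 M = \pi_1(M)\cdot (M \cap Ke_2) \cdot (e_1\wedge e_2)$ together with the prescribed index) shows that $M \cap Ke_2 = \aa(M) \bb_2 \cc^{-1} e_2$, which then forces $\cc \subseteq \aa(M) \subseteq R$.

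The key step is to parameterize the fiber of the map $M \mapsto \aa(M)$ over each $\aa$ with $\cc \subseteq \aa \subseteq R$. For this, I would use that $\mathrm{Ext}^1$ vanishes over the Dedekind domain $R$, so that the short exact sequence $0 \to \aa\bb_2\cc^{-1}e_2 \to M \to \aa^{-1}\bb_1 \to 0$ splits; every $M$ in the fiber can thus be presented as $\{x e_1 + (\sigma(x)+y) e_2 : x \in \aa^{-1}\bb_1,\ y \in \aa\bb_2\cc^{-1}\}$ for some $R$-linear $\sigma : \aa^{-1}\bb_1 \to K$. The constraint $M \supseteq M_0$ simplifies to $\sigma(\bb_1) \subseteq \aa\bb_2\cc^{-1}$, and two sections produce the same $M$ precisely when their difference lands entirely in $\aa\bb_2\cc^{-1}$. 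Hence the fiber is in bijection with $\Hom_R(\aa^{-1}\bb_1/\bb_1,\ K/\aa\bb_2\cc^{-1})$, and since the source is annihilated by $\aa$, the image automatically lies in the $\aa$-torsion subgroup $\bb_2\cc^{-1}/\aa\bb_2\cc^{-1}$ of the target.

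To finish, I would observe that $\aa^{-1}\bb_1/\bb_1$ and $\bb_2\cc^{-1}/\aa\bb_2\cc^{-1}$ are both invertible $R/\aa$-modules (locally they are free of rank one), so the above Hom is itself an invertible $R/\aa$-module; since $R/\aa$ is a product of local Artinian rings, its Picard group is trivial, and the Hom is therefore free of rank one, i.e., noncanonically isomorphic to $R/\aa$. Taking the disjoint union over all admissible $\aa$ produces the claimed bijection with $\coprod_{R \supseteq \aa \supseteq \cc} R/\aa$. The main obstacle is the careful ideal bookkeeping in the shape calculation and in verifying that the Hom group lies in the advertised $\aa$-torsion subgroup; once those are nailed down, the Ext-vanishing and Picard-triviality inputs make the rest essentially formal.
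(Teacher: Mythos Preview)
Your proof is correct and follows essentially the same route as the paper's: project $M$ onto one rank-one summand of a fixed decomposition to extract the divisor $\aa$ of $\cc$, then identify the fiber over each $\aa$ with a $\Hom$-group noncanonically isomorphic to $R/\aa$. The only differences are cosmetic---the paper decomposes $\cc^{-1}M_0$ rather than $M_0$ (so that the containment $M_0 \subseteq M$ becomes automatic from $\cc\dd_1 = \aa \cdot \bb\dd_1$), and it identifies the cyclic quotients with $R/\aa$ via an explicit Chinese Remainder argument rather than invoking the triviality of $\Pic(R/\aa)$.
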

\begin{proof}
Here we simply have to count the superlattices $M$ of index $\cc$ over a fixed lattice $M_0$. The classical argument over $\ZZ$ extends rather readily; for completeness, we give the proof.

Note that we must have $M \subseteq \cc^{-1}M_0$, since $M \wedge M_0 \subseteq M \wedge M = \cc^{-1} \Lambda^2 M_0$. Pick a decomposition $\cc^{-1}M_0 \cong \dd_1 \oplus \dd_2$. Then consider the map $\pi : M \to \dd_1$ that is the restriction of projection to the first factor. We have $\ker \pi = \{0\} \times \aa \dd_2$ and $\im \pi = \bb \dd_1$ for some ideals $\aa,\bb$ subject to the familiar behavior of top exterior powers in exact sequences:
\[
  \cc^{-1}\Lambda^2 M_0 = \Lambda^2 M = \aa \dd_2 \wedge \bb \dd_1 = \aa\bb \cc^{-2} \Lambda^2 M_0,
\]
that is, $\aa\bb = \cc$. Now if $\aa$ and $\bb$ are fixed, the lattice $M$ is determined by a picking a coset in $\dd_2/\aa \dd_2$ to be the preimage of each point $b \in \im \pi$; this is determined by an $R$-module map
\[
  \bb \dd_1 \to \dd_2/\aa \dd_2
\]
or, since $\cc \dd_1$ is necessarily in the kernel,
\[
  \bb \dd_1/\cc \dd_1 \to \dd_2/\aa \dd_2.
\]
We can identify both the domain and the target of this map with $R/\aa$ via the standard result that if $\aa$ and $\bb$ are ideals in a Dedekind domain $R$, then $\aa/\aa\bb \cong R/\bb$. (Proof: Use the Chinese Remainder Theorem to find $a \in \aa$ that has minimal valuation with respect to each of the primes dividing $\bb$. Then $a$ generates $\aa/\aa\bb$, and $a \mapsto 1$ is the desired isomorphism.) Then the desired parameter space is $\Hom_R(R/\aa,R/\aa) \cong R/\aa$. Letting $\aa$ vary yields the claimed bijection.
\end{proof}

In particular, we have the following.
\begin{cor}[cf.~\cite{B3}, Corollary 5] \label{cor:at least 1 res}
Every quartic algebra over a Dedekind domain possesses at least one numerical resolvent.
\end{cor}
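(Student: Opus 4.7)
My plan is to deduce the result directly from the preceding counting corollary, with the trivial quartic algebra handled by a small separate construction. I do not anticipate a serious obstacle, since the real work has already been done in Theorem \ref{thm:content}.

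First I would argue that for a \emph{nontrivial} quartic $Q$ the content ideal $\cc$ is nonzero. Indeed, if every $\lambda^{ij}_{k\ell}$ vanished, then the relations \eqref{eq:c-lam} would force every $c_{ij}^k$ to vanish, and the associativity-based deduction of the constant terms in the proof of Theorem \ref{thm:quartic} would then force every $c_{ij}^0 = 0$, making $Q$ trivial. Hence the hypothesis of the counting corollary is met, and it identifies the set of numerical resolvents of $Q$ with $\coprod_{R \supseteq \aa \supseteq \cc} R/\aa$, which is manifestly nonempty (the summand $\aa = R$ contributes the singleton $R/R$).

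If a concrete construction is preferred, I would instead start from the minimal resolvent $(M_0,\theta_0,\phi_0)$ of Theorem \ref{thm:content}(a), decompose $M_0 \cong \aa_1 \oplus \aa_2$ by Theorem \ref{thm:lattices}, and set $M := \aa_1 \oplus \cc^{-1}\aa_2 \supseteq M_0$. A one-line computation gives $\Lambda^2 M = \cc^{-1}\Lambda^2 M_0$, so the $K$-linear extension $\theta$ of $\theta_0$ to $\Lambda^2 M$ has image $\cc^{-1}\cdot \cc \cdot \Lambda^3(Q/R) = \Lambda^3(Q/R)$. Being a surjection between rank-$1$ lattices, $\theta$ is an isomorphism, and by Theorem \ref{thm:content}(d) the triple $(M,\theta,\phi_0)$ is a numerical resolvent of $Q$.

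Finally, for the trivial quartic algebra, $xy = 0$ for all $x,y \in Q/R$, so the defining relation $x \wedge y \wedge xy = \theta(\phi(x) \wedge \phi(y))$ is automatically satisfied whenever $\phi = 0$. I would therefore choose any rank-$2$ lattice $M$ with $\Lambda^2 M \cong \Lambda^3(Q/R)$---for example $M = R \oplus \Lambda^3(Q/R)$---together with any such isomorphism $\theta$; then $(M,\theta,0)$ is a numerical resolvent of $Q$, completing all cases.
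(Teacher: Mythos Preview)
Your argument is correct and matches the paper's approach: the corollary is stated there without proof, as an immediate consequence of Theorem~\ref{thm:content} and the preceding counting corollary, and you have simply supplied the details the paper leaves implicit (including the separate treatment of the trivial ring and the verification that $\cc\neq 0$ for nontrivial $Q$). One small imprecision worth fixing: the relations~\eqref{eq:c-lam} by themselves do not force \emph{every} $c_{ij}^k$ to vanish---they leave the diagonal-type coefficients $c_{ij}^j$ and $c_{ii}^i$ undetermined up to the lift shifts $\xi_i\mapsto\xi_i+a_i$---but after the normalization noted in the proof of Theorem~\ref{thm:quartic} all $c_{ij}^k$ do vanish, and then your deduction that every $c_{ij}^0=0$ (hence $Q$ trivial) goes through.
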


\subsection{The cubic ring structure of the resolvent}

In contrast to the classical presentation, the resolvent maps we have constructed take their values in \emph{modules}, without any explicit connection to a cubic ring. In fact, there is the structure of a cubic ring already latent in a resolvent:

\begin{thm} \label{thm:cub ring struc}
To any quartic ring $Q$ and resolvent $(M,\theta,\phi)$ thereof, one can canonically associate a cubic ring $C$ with an identification $C/R \cong M$.
\end{thm}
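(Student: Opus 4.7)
The plan is to invoke Theorem~\ref{thm:cubic}, which shows that cubic algebras $C$ with $C/R \cong M$ correspond canonically to cubic maps $\psi : M \to \Lambda^2 M$. The task is therefore to construct such a $\psi$ canonically from the resolvent data $(M,\theta,\phi)$; Theorem~\ref{thm:cubic} will then furnish $C$. The guiding intuition is to extract $\psi$ as a ``half-discriminant'' of $\phi$, in the spirit of the classical resolvent cubic.

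Concretely, for each $\mu \in M$, set
\[
  q_\mu : Q/R \to \Lambda^2 M, \quad v \mapsto \phi(v) \wedge \mu.
\]
This is a ternary quadratic form on the rank-$3$ lattice $Q/R$ valued in the rank-$1$ lattice $\Lambda^2 M$, with coefficients depending linearly on $\mu$. The classical half-discriminant of a ternary quadratic form, defined in a basis by the $\ZZ$-coefficient polynomial
\[
  \Delta(q) = 4 q_{11} q_{22} q_{33} + q_{12} q_{13} q_{23} - q_{11} q_{23}^2 - q_{22} q_{13}^2 - q_{33} q_{12}^2
\]
in the coefficients $q(v) = \sum_{i \leq j} q_{ij} v_i v_j$, is basis-invariant (it transforms by the square of the determinant under change of variables) and hence well-defined over any Dedekind domain, yielding a canonical element, cubic in $\mu$:
\[
  \Delta(q_\mu) \in \bigl(\Lambda^3(Q/R)\bigr)^{\tensor -2} \tensor (\Lambda^2 M)^{\tensor 3}.
\]
Using $\theta^{\tensor 2} : (\Lambda^2 M)^{\tensor 2} \to \bigl(\Lambda^3(Q/R)\bigr)^{\tensor 2}$ to contract two of the three $\Lambda^2 M$-factors against the inverse powers of $\Lambda^3(Q/R)$, I descend $\Delta(q_\mu)$ to a cubic map $\psi : M \to \Lambda^2 M$, and then Theorem~\ref{thm:cubic} produces the desired cubic ring $C$.

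The chief technical obstacle is verifying that $\psi$ genuinely takes values in $\Lambda^2 M$ (rather than in some fractional overlattice). This reduces to an ideal-arithmetic check: in decompositions $Q/R = \bigoplus_i \aa_i \xi_i$ and $M = \bigoplus_j \bb_j m_j$, the coefficient of $v_i v_j$ in $q_\mu$ lies in $\aa_i^{-1} \aa_j^{-1} \cdot \Lambda^2 M$, so the monomials of $\Delta(q_\mu)$ lie in $(\aa_1 \aa_2 \aa_3)^{-2} \cdot (\Lambda^2 M)^{\tensor 3}$; the factor $(\aa_1 \aa_2 \aa_3)^{-2}$ then cancels exactly against the $(\Lambda^3(Q/R))^{\tensor -2}$ upon contraction with $\theta^{\tensor 2}$, and the result lies integrally in $\Lambda^2 M$. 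Canonicity is automatic because every step uses only intrinsic tensor operations. As a sanity check, one may apply the construction to Example~\ref{ex:1st res}: the half-discriminant of the pencil $v \mapsto \phi(v) \wedge \mu$ associated to the classical resolvent $\phi(a,b,c,d) = (ab+cd, ac+bd, ad+bc)$ should recover, via the Delone--Faddeev correspondence of Theorem~\ref{thm:cubic}, the étale cubic algebra $R^{\oplus 3}$.
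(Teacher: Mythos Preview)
Your proposal is correct and essentially the same as the paper's: both extract the cubic form on $M$ as the half-discriminant of $\phi$ and then use $\theta$ to land in $\Lambda^2 M$, invoking Theorem~\ref{thm:cubic}. The only cosmetic difference is order of operations---the paper computes the half-discriminant of the $M$-valued form $\phi$ as an element of $(\aa_1\aa_2\aa_3)^{-2}\Sym^3 M$ and then applies the duality $M \cong \Lambda^2 M \otimes M^*$, whereas you first wedge with a generic $\mu\in M$ to obtain a $\Lambda^2 M$-valued form $q_\mu$ and then take its half-discriminant; unwinding shows the two yield the same cubic map.
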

\begin{rem}
As stated, this theorem has no content, as one can take the trivial ring structure on $R \oplus M$. However, we will produce a ring structure generalizing the classical notion of cubic resolvent. This $C$ may be called a cubic resolvent of $Q$, the maps $\theta$ and $\phi$ being suppressed.
\end{rem}

\begin{proof}
We use the following trick of multilinear algebra (compare \cite{WQuartic}, p.~1076). First pick a decomposition $Q/R = \aa_1\txi_1 \oplus \aa_2\txi_2 \oplus \aa_3\txi_3$. Writing
\[
  \phi(x_1\txi_1 + x_2\txi_2 + x_3\txi_3) = \sum_{i\leq j} x_i x_j \mu_{ij} \quad
  (\mu_{ij} \in \aa_i^{-1} \aa_j^{-1} M),
\]
consider the determinant
\begin{align*}
  \Delta &= 4 \det \begin{bmatrix}
    \mu_{11} & \frac{1}{2} \mu_{12} & \frac{1}{2} \mu_{13} \\
    \frac{1}{2} \mu_{12} & \mu_{22} & \frac{1}{2} \mu_{23} \\
    \frac{1}{2} \mu_{13} & \frac{1}{2} \mu_{23} & \mu_{33}
  \end{bmatrix} \\
  &= 4\mu_{11}\mu_{22}\mu_{33} + \mu_{12}\mu_{13}\mu_{23} - \mu_{11}\mu_{23}^2 - \mu_{22}\mu_{13}^2 - \mu_{33}\mu_{12}^2 \quad \\
  &\in \aa_1^{-2}\aa_2^{-2}\aa_3^{-2} \Sym^3 M
\end{align*}
(the two expressions are equal except when $\cha K = 2$, in which case the first becomes purely motivational). Next, $\theta$ allows us to map $\aa_1^{-2}\aa_2^{-2}\aa_3^{-2}$ to $(\Lambda^2 M)^{\tensor -2}$. The $\Lambda^2 M$-valued pairing $\wedge$ on $M$ gives an identification of $M$ with $\Lambda^2 M \tensor M^*$, so we can transform
\begin{align*}
  (\Lambda^2M)^{\tensor -2} \tensor \Sym^3 M &\cong (\Lambda^2M)^{\tensor-2} \tensor \Sym^3((\Lambda^2 M) \tensor M^*) \\
  &\cong (\Lambda^2M)^{\tensor-2} \tensor (\Lambda^2 M)^{\tensor 3} \tensor \Sym^3(M^*) \\
  &\cong (\Lambda^2M) \tensor \Sym^3(M^*).
\end{align*}
Thus $\Delta$ yields a cubic map $\delta : M \to \Lambda^2 M$, which by Theorem \ref{thm:cubic} is equivalent to a cubic ring $C$ with an identification $C/R \cong M$. That $\delta$ is independent of the chosen basis $(\txi_1, \txi_2, \txi_3)$ is a polynomial identity that follows from properties of the determinant, at least when $\cha K \neq 2$.
\end{proof}

Two theorems concerning this cubic ring structure we will state without proof, since they are mere polynomial identities already implied by Bhargava's work over $\ZZ$. The first may be used as an alternative to Theorem \ref{thm:cubic} to determine the multiplicative structure on $C$; as Bhargava notes, it uniquely determines the ring $C$ in all cases over $\ZZ$ except when $Q$ has nilpotents.

\begin{thm}[cf.~\cite{B3}, equation (30)]
Let $Q$ be a quartic ring, and let $C$ be the cubic ring whose structure is determined by the resolvent map data $\theta : \Lambda^2(C/R) \to \Lambda^3(Q/R)$ and $\phi : Q/R \to C/R$. For any element $x \in Q$ and any lift $y \in C$ of the element $\phi(x) \in C/R$, we have the equality
\[
  x \wedge x^2 \wedge x^3 = \theta(y \wedge y^2).
\]
\end{thm}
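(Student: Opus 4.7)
The plan is to reduce the claimed identity to a structural congruence in $C/R$ via the defining resolvent equation \eqref{eq:resolvent}, and then verify that congruence as a universal polynomial identity by specializing to the split \'etale case.

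Substituting $y \to x^2$ in \eqref{eq:resolvent} gives immediately
\[
  x \wedge x^2 \wedge x^3 \;=\; \theta\bigl(\phi(x) \wedge \phi(x^2)\bigr).
\]
Since $\tilde y = \phi(x)$ by hypothesis, the theorem therefore reduces to the congruence
\begin{equation}\label{plan:congr}
  \phi(x^2) \;\equiv\; \widetilde{y^2} \pmod{R\cdot\phi(x)} \quad \text{in } C/R,
\end{equation}
which is independent of the choice of lift $y$ (replacing $y$ by $y+a$ alters $\widetilde{y^2}$ by $2a\phi(x)$, and this vanishes after wedging with $\tilde y = \phi(x)$).

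Fixing decompositions $Q/R = \bigoplus_i \aa_i\txi_i$ and $C/R = M$, write $\phi\bigl(\sum x_i\txi_i\bigr) = \sum_{i\le j} x_i x_j \mu_{ij}$. By Theorem \ref{thm:quartic}, the multiplication constants $c_{ij}^k$ of $Q$ are polynomial in the $\mu_{ij}$; by the determinantal construction of $\Delta$ in the proof of Theorem \ref{thm:cub ring struc} together with Theorem \ref{thm:cubic}, the multiplication constants of $C$ are likewise polynomial in the $\mu_{ij}$. Hence both sides of \eqref{plan:congr} are polynomial in $(x_1,x_2,x_3)$ with coefficients in $\ZZ[\mu_{ij}]$, and it suffices to verify \eqref{plan:congr} on a Zariski-dense subset of the parameter space of pairs $(\phi, x)$.

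For the verification I would specialize to Example \ref{ex:1st res}: take $R = K$ a field of characteristic zero, $Q = K^4$ with orthogonal idempotents $e_1,\dots,e_4$, $C = K^3$ with orthogonal idempotents $f_1,f_2,f_3$, $\phi$ the classical resolvent, and $x = ae_1+be_2+ce_3+de_4$ with $a,b,c,d$ independent transcendentals. Taking $y = (ab+cd)f_1 + (ac+bd)f_2 + (ad+bc)f_3 \in C$ and multiplying componentwise in $K^3$ gives
\[
  y^2 \;-\; \bigl((a^2b^2+c^2d^2)f_1 + (a^2c^2+b^2d^2)f_2 + (a^2d^2+b^2c^2)f_3\bigr) \;=\; 2abcd \cdot 1_C \;\in\; R,
\]
and the second summand is a lift of $\phi(x^2)$. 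Hence $\widetilde{y^2} = \phi(x^2)$ in $C/R$ (a strengthening of \eqref{plan:congr} in this case), so the identity holds on the \'etale locus and hence universally. The main obstacle is the polynomiality claim above: one must trace through the determinantal formula for $\Delta$ and the explicit formulas in the proof of Theorem \ref{thm:cubic} ($n = -bf$, $m = -be$, $\ell = -ae$) to confirm that the multiplication on $C$ is genuinely polynomial in the $\mu_{ij}$, so that \eqref{plan:congr} lives in a single ring $\ZZ[\mu_{ij},x_1,x_2,x_3]$; once this is granted, Zariski density concludes the proof over any Dedekind domain $R$.
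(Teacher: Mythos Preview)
The paper does not actually prove this theorem: immediately before the statement it says the two results ``we will state without proof, since they are mere polynomial identities already implied by Bhargava's work over $\ZZ$.'' So the paper's argument is nothing more than ``both sides are universal polynomials in the structure constants, and Bhargava checked the identity over $\ZZ$.'' Your approach is in the same spirit---reduce to a universal polynomial identity and verify once---but you replace the citation to Bhargava with a self-contained check on the split \'etale algebra, which is a genuine improvement in exposition. Your first reduction via substituting the second variable of \eqref{eq:resolvent} by $x^2$ is clean and correct, and the computation $y^2 - \Phi(x^2) = 2abcd\cdot 1_C$ in $K^3$ is right.

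There is one gap you should close. You claim it suffices to verify on a Zariski-dense subset of the space of pairs $(\phi,x)$, but then you only check at a \emph{single} point $\phi_0$ (the classical resolvent of $K^4$) with $x$ generic. The locus $\{\phi_0\}\times\mathbb{A}^3$ is certainly not dense in $\mathbb{A}^{12}\times\mathbb{A}^3$. What you need to add is the equivariance step: over an algebraically closed field every \'etale quartic algebra is isomorphic to $K^4$, and its resolvent is unique up to isomorphism (content $1$, by Theorem~\ref{thm:content}), so the $\GL_3\times\GL_2$-orbit of $\phi_0$ is the entire \'etale locus; since both sides of the asserted equality are natural under change of basis, your verification at $\phi_0$ propagates to the full \'etale locus, which \emph{is} dense. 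Once you say this, the argument is complete. (A minor stylistic point: the quantity you should track is the wedge $\phi(x)\wedge(\phi(x^2)-\widetilde{y^2})$, which is an honest polynomial; the ``congruence mod $R\phi(x)$'' is not literally a polynomial condition since it quantifies over a scalar $r$, though in the \'etale case you in fact get the stronger exact equality.)
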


We end this section with a theorem concerning discriminants, which until now have been conspicuously absent from our discussion, in direct contrast to Bhargava's presentation. Recall that the discriminant of a $\ZZ$-algebra $Q$ with a $\ZZ$-basis $(\xi_1,\ldots,\xi_n)$ is defined as the determinant of the matrix $[\tr(\xi_i\xi_j)]_{i,j}$. In like manner, define the \emph{discriminant} of a rank-$n$ $R$-algebra $Q$ to be the map
\[
  \disc(Q) : x_1 \wedge \cdots \wedge x_n \mapsto \det[\tr(x_i x_j)]_{i,j}.
\]
It is quadratic and thus can be viewed as an element of $(\Lambda^n Q^*)^{\otimes 2}$, a rank-$1$ lattice that is not in general isomorphic to $R$. The discriminants of a quartic ring and its resolvents are ``equal'' in precisely the way one might hope:

\begin{thm}[cf.~\cite{B3}, Proposition~13]
Let $Q$, $C$, $\theta$ be as above. The morphism
\[
  (\theta^*)^{\otimes 2} : (\Lambda^3(Q/R)^*)^{\otimes 2} \to (\Lambda^2(C/R)^*)^{\otimes 2}
\]
carries $\disc Q$ to $\disc C$.
\end{thm}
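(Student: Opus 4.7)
The plan is to observe that Bhargava's discriminant identity over $\ZZ$ is a polynomial identity in the structure constants of $Q$, and that this identity specializes to any quartic $R$-algebra.

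Both $\disc Q$ and $(\theta^*)^{\otimes 2}(\disc C)$ lie in the rank-$1$ lattice $(\Lambda^3(Q/R)^*)^{\otimes 2}$, so equality can be checked after localization at each nonzero prime $\pp$ of $R$. Over the DVR $R_\pp$ every lattice becomes free, so I would fix a basis $(\xi_1,\xi_2,\xi_3)$ of $Q/R$ with lifts in $Q$, encoding the multiplication of $Q$ by the structure constants $c_{ij}^k$ from the proof of Theorem \ref{thm:quartic}; the map $\theta$ is then represented by a single scalar. In these coordinates the trace-pairing matrix entries satisfy $\tr(\xi_k) = \sum_j c_{kj}^j$ and $\tr(\xi_i\xi_j) = 4c_{ij}^0 + \sum_k c_{ij}^k \tr(\xi_k)$, so $\disc Q$ is a $\ZZ$-polynomial in the $c_{ij}^k$. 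On the cubic side, \eqref{eq:c-lam} expresses the $\lambda^{ij}_{k\ell}$ as $\ZZ$-linear combinations of the $c_{ij}^k$, the determinant formula for $\Delta$ in the proof of Theorem \ref{thm:cub ring struc} gives the cubic form $\delta$ as a $\ZZ$-polynomial in the $\lambda$'s, and the multiplication table of $C$ is recovered from $\delta$ via the proof of Theorem \ref{thm:cubic}. Composing these steps presents $(\theta^*)^{\otimes 2}(\disc C)$ likewise as a $\ZZ$-polynomial in the $c_{ij}^k$.

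The desired identity is thus a single equation in the coordinate ring $\ZZ[c_{ij}^k]/(\text{associativity})$. Bhargava's proof of Proposition~13 in \cite{B3} verifies this equation as a direct computation in the universal setting over $\ZZ$, and since the equation is polynomial with integer coefficients its validity in the coordinate ring descends to the specialization defining our $Q$ over $R_\pp$.

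The main obstacle is the bookkeeping needed to confirm that our construction of the cubic resolvent produces the same formulas, in the $c_{ij}^k$, that Bhargava obtains by his route. Because every step of our derivation is defined without introducing new denominators and uses the same $\lambda^{ij}_{k\ell}$ as intermediaries, this reduces to a formal term-by-term comparison rather than any substantive calculation.
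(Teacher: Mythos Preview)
Your proposal is correct and matches the paper's approach: the paper does not give a proof of this statement at all, but explicitly declares it (together with the preceding theorem) to be a ``mere polynomial identity already implied by Bhargava's work over $\ZZ$.'' Your write-up simply makes this remark precise by localizing to obtain free bases and tracing through the structure constants, which is exactly the intended argument.
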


\begin{examp}\label{ex:res z}
Once again, we recapitulate the situation over $\ZZ$. Here, once bases $Q/R = \ZZ\xi_1 \oplus \ZZ\xi_2 \oplus \ZZ\xi_3$ and $C/R = \ZZ\eta_1 \oplus \ZZ\eta_2$ have been fixed so that $\theta$ is given simply by $\eta_1 \wedge \eta_2 \mapsto \xi_1 \wedge \xi_2 \wedge \xi_3$, the remaining datum $\phi$ of a numerical resolvent can be written as a pair of ternary quadratic forms, or, even more pictorially, as a pair of symmetric matrices
\[
  (A,B) = 
  \left(
  \begin{bmatrix}
    a_{11} & \frac{1}{2} a_{12} & \frac{1}{2} a_{13} \\
    \frac{1}{2} a_{12} & a_{22} & \frac{1}{2} a_{23} \\
    \frac{1}{2} a_{13} & \frac{1}{2} a_{23} & a_{33}
  \end{bmatrix},
  \begin{bmatrix}
    b_{11} & \frac{1}{2} b_{12} & \frac{1}{2} b_{13} \\
    \frac{1}{2} b_{12} & b_{22} & \frac{1}{2} b_{23} \\
    \frac{1}{2} b_{13} & \frac{1}{2} b_{23} & b_{33}
  \end{bmatrix}
  \right).
\]
where $a_{ij}, b_{ij} \in \ZZ$. The associated cubic ring is found by applying Theorem \ref{thm:cubic} to the form $4 \det(Ax + By)$. Some salient examples follow:
\begin{itemize}
\item First note that there is a resolvent map of $\CC$-algebras from $Q_0 = \CC^{\oplus 4}$ to $C_0 = \CC^{\oplus 3}$ given by the roots of the equation-solver's resolvent
\[
  (x,y,z,w) \mapsto (xy + zw, xz + yw, xw + yz)
\]
or, more accurately, by its reduction modulo $\CC$
\[
  \phi_0 : (x,y,z,0) \mapsto (xy - yz, xz - yz, 0),
\]
supplemented of course by the standard identification
\[
  \theta_0 : \Lambda^2 (C_0/\CC) \to \Lambda^3 (Q_0/\CC).
\]

Accordingly, if we have a quartic $\ZZ$-algebra $Q \subseteq Q_0$ and a cubic $\ZZ$-algebra $C \subseteq C_0$ on which the restrictions of $\phi_0$ and $\theta_0$ are well-defined, then it automatically follows that $C/\ZZ$ is a resolvent for $Q$ with attached cubic ring structure $C$.
\item As an example, consider the ring
\[
  Q = \ZZ + p(\ZZ \oplus \ZZ \oplus \ZZ \oplus \ZZ) = \{(a,b,c,d) \in \ZZ^{\oplus 4} : a\equiv b\equiv c\equiv d \bmod p\}
\]
of content $p$, for each prime $p$. The minimal resolvent of $Q$ comes out to be $\phi_0(Q/\ZZ) = C'/\ZZ$, where
\[
  C' = \ZZ + p^2 \cdot \ZZ^{\oplus 3}.
\]
But $C'$ is not a numerical resolvent of $Q$: it has index $p^4$ in $\ZZ^{\oplus 3}$, while $Q$ has index $p^3$ in $\ZZ^{\oplus 4}$, so the restriction of $\theta_0$ cannot possibly be an isomorphism. We must enlarge $C'$ by a factor of $p$. Note that any subgroup $C$ such that
\[
  \ZZ + p^2 \cdot \ZZ^{\oplus 3} \subseteq C \subseteq \ZZ + p \cdot \ZZ^{\oplus 3}
\]
is a ring, since the product of two elements in $p \cdot \ZZ^{\oplus 3}$ lies in $p^2 \cdot \ZZ^{\oplus 3}$.
So any ring of the form
\[
  C = \ZZ + p^2 \cdot \ZZ^{\oplus 3} + \<ap,bp,0\>
\]
is a numerical resolvent of $Q$. Letting $[a:b]$ run over $\PP^1(\ZZ/p\ZZ)$ yields the $p+1$ numerical resolvents predicted by Theorem \ref{thm:content}.
\item Note that some of these resolvents are isomorphic under the automorphism group of $Q$, which is simply $S_4$ acting by permuting the coordinates. One verifies that $S_4$ acts through its quotient $S_3$, which in turn permutes the three distinguished points $0,1,\infty$ on $\PP^1(\ZZ/p\ZZ)$. Accordingly, if we are using Theorem \ref{thm:quartic} to count quartic rings, the ring $Q$ will appear not $p+1$ times but $\lceil p/6 \rceil + 1$ times ($1$ time if $p = 2$). This is no contradiction with Theorem \ref{thm:content}, which gives the number of resolvents \emph{as maps out of the given ring $Q$.}
\end{itemize}
\end{examp}

\section{Maximality}
\label{sec:maximality}
In order to convert his parametrization of quartic rings into one of quartic fields, Bhargava needed a condition for a ring to be maximal, i.e.~to be the full ring of integers in a field. In like manner we discuss how to tell if a quartic ring $Q$ over a Dedekind domain $R$ is maximal in its fraction ring $Q_K$ using conditions on a numerical resolvent. The first statement to make is that maximality is a local condition, i.e.~can be checked at each prime ideal.

\begin{prop}
Let $Q$ be a ring of finite rank $n$ over $R$. $Q$ is maximal if and only if $Q_\pp = Q \otimes_R R_\pp$ is maximal over $R_\pp$ for all (nonzero) primes $\pp \subseteq R$.
\end{prop}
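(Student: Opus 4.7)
The plan is to prove both directions using the localization-gluing principle for lattices over a Dedekind domain. The key tool is the standard fact that any rank-$n$ lattice $L \subseteq V = L \otimes_R K$ satisfies $L = \bigcap_\pp L_\pp$ inside $V$, and conversely that a family of $R_\pp$-lattices $(M_\pp)_\pp$ in $V$ which coincides with a fixed reference lattice at all but finitely many primes glues to a unique $R$-lattice $M = \bigcap_\pp M_\pp$ whose localizations are the prescribed $M_\pp$. In our situation $V = Q_K$, and ``order'' just means ``lattice that is also a unital subring.''

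For the ``only if'' direction I would argue by contrapositive. Suppose some $Q_\pp \subsetneq Q'_\pp$, where $Q'_\pp$ is an $R_\pp$-order in $Q_K$. Set $M_\pp := Q'_\pp$ and $M_\qq := Q_\qq$ for $\qq \neq \pp$. This family agrees with $Q$ away from $\pp$, so it glues to a rank-$n$ $R$-lattice $Q' \subseteq Q_K$ with $Q'_\qq = M_\qq$ for every $\qq$. Because the intersection of subrings of $Q_K$ is again a subring, and $1 \in M_\qq$ for each $\qq$, the glued module $Q' = \bigcap_\qq M_\qq$ is an $R$-order. Its localization at $\pp$ strictly contains $Q_\pp$, hence $Q' \supsetneq Q$, contradicting maximality of $Q$.

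For the ``if'' direction, suppose $Q$ is not maximal: there is an $R$-order $Q \subsetneq Q'$ in $Q_K$. Both are lattices of rank $n$, so the quotient $Q'/Q$ is a nonzero finitely generated torsion $R$-module, and therefore has nonempty support. Pick a prime $\pp$ with $(Q'/Q)_\pp \neq 0$. Since localization is exact, $Q_\pp \subsetneq Q'_\pp$, and $Q'_\pp$ is an $R_\pp$-order inside $Q_K$, showing that $Q_\pp$ fails to be maximal.

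The main subtlety is verifying that the glued module in the forward direction is genuinely a ring rather than just an $R$-submodule of $Q_K$; I expect this to be the only place one has to pause. It is resolved immediately by realizing $Q'$ as the intersection $\bigcap_\qq M_\qq$ inside $Q_K$, which is stable under multiplication and contains $1$ because each factor is. Everything else reduces to routine properties of localization and the fact that lattices over Dedekind domains can be reconstructed from their local data.
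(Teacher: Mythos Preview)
Your proof is correct and follows essentially the same route as the paper. In fact your glued lattice $\bigcap_\qq M_\qq$ is literally the paper's $Q[\pp^{-1}] \cap T$ (since $Q[\pp^{-1}] = \bigcap_{\qq\neq\pp} Q_\qq$); the only difference is that the paper verifies finite generation by hand---bounding $Q'$ inside an auxiliary lattice $X$ via $X[\pp^{-1}]\cap R_\pp X = X$---whereas you invoke the standard local-global gluing lemma for lattices as a black box.
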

\begin{rem}
Here, $R_\pp$ denotes the localization
\[
  R_\pp = \left\{ \frac{a}{b} \in K : a \in R, b \in R \setminus \pp \right\}
\]
(not its completion as with the symbol $\ZZ_p$).
\end{rem}
\begin{proof}
When $Q$ is a domain, one can use the facts that $Q$ is maximal if and only if it is \emph{normal} (integrally closed in its fraction field) and that normality is a local property. A direct proof is also not difficult.

Suppose that $Q$ is not maximal, so that there is a larger ring $Q'$ with $Q_K = Q'_K$. The nonzero $R$-module $Q'/Q$ is pure torsion, so there is a prime $\pp$ such that $(Q'/Q)_\pp = Q'_\pp/Q_\pp$ is nonzero, i.e.~$Q_\pp$ embeds into the larger ring $Q'_\pp$.

Suppose now that for some $\pp$, $Q_\pp$ embeds into a larger ring $T$. We construct an extension ring $Q'$ of $Q$ by the formula
\[
  Q' = Q[\pp^{-1}] \intsec T.
\]
It is obvious that $Q'$ is a ring containing $Q$; it is not so obvious that it is a rank-$n$ ring, in other words, that it is finitely generated as an $R$-module. Let $X$ be the $R$-lattice generated by any $K$-basis $x_1,\ldots,x_n$ of $Q_K$. Since $Q$ and $T$ are finitely generated $R$- and $R_\pp$-modules respectively, we may divide the $x_i$ by sufficiently divisible elements of $R$ to assume $Q \subseteq X$ and $T \subseteq R_\pp X$. Then
\[
  Q' \subseteq X[\pp^{-1}] \intsec R_\pp X.
\]
Note that
\begin{align*}
  R_\pp X &= \left\{\sum_i a_i x_i : v_\pp(a_i) \geq 0 \right\} \\
  X[\pp^{-1}] &= \left\{\sum_i a_i x_i : v_\qq(a_i) \geq 0 \quad \forall\qq \neq \pp\right\} \\
  X[\pp^{-1}] \intsec R_\pp X &= \left\{\sum_i a_i x_i : v_\qq(a_i) \geq 0 \quad \forall\qq\right\} = X, \\
\end{align*}
whence $Q' \subseteq X$ is finitely generated.

Finally we must show that $Q' \neq Q$. This is obvious by localization:
\[
  Q'_\pp = (Q[\pp^{-1}])_\pp \intsec T_\pp = Q_K \intsec T = T \neq Q_\pp. \qedhere
\]
\end{proof}
The local rings $R_\pp$ are DVR's, and in particular are PID's, so we can visualize a localized numerical resolvent $(Q_\pp, M_\pp, \theta, \phi)$ in a simple way: Pick bases $Q_\pp/R_\pp = {R_\pp}{\langle\txi_1,\txi_2,\txi_3\rangle}$ and $M_\pp = {R_\pp}{\<\eta_1,\eta_2\>}$ such that $\theta(\eta_1 \wedge \eta_2) = \txi_1 \wedge \txi_2 \wedge \txi_3$, and write $\phi$ as a pair of matrices
\[
  (A,B) = \left(
  \begin{bmatrix}
    a_{11} & \frac{1}{2} a_{12} & \frac{1}{2} a_{13} \\
    \frac{1}{2} a_{12} & a_{22} & \frac{1}{2} a_{23} \\
    \frac{1}{2} a_{13} & \frac{1}{2} a_{23} & a_{33}
  \end{bmatrix},
  \begin{bmatrix}
    b_{11} & \frac{1}{2} b_{12} & \frac{1}{2} b_{13} \\
    \frac{1}{2} b_{12} & b_{22} & \frac{1}{2} b_{23} \\
    \frac{1}{2} b_{13} & \frac{1}{2} b_{23} & b_{33}
  \end{bmatrix}
  \right)
\]
where $1/2$ is a purely formal symbol and $a_{ij},b_{ij} \in R$ are the coefficients of the resolvent map
\[
  \phi(x_1\txi_1 + x_2\txi_2 + x_3\txi_3) = \sum_{1 \leq i < j \leq 3} (a_{ij} \eta_1 + b_{ij} \eta_2)x_i x_j.
\]
We will characterize maximality of $Q_\pp$ in terms of the $a_{ij}$ and $b_{ij}$. The first simplification is applicable to rings of any rank.
\begin{lem}[cf.~\cite{B3}, Lemma 22]\label{lem:newring}
Let $R$ be a DVR with maximal ideal $\pp$, and let $Q$ be an $R$-algebra of rank $n$. If $Q$ is not maximal, then there exists $k\geq 1$ and a basis $x_1, x_2, \ldots, x_n = 1$ of $Q$ such that
\[
  Q' = R\<\pp^{-1}x_1, \ldots, \pp^{-1}x_k, x_{k+1}, \ldots, x_n\>
\]
is a ring.
\end{lem}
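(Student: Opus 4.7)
Let $\pi$ be a uniformizer of $R$, so $\pp = (\pi)$. My plan is to first produce a strict overring $Q'$ of $Q$ that is a ring and satisfies the ``one-step'' condition $\pp Q' \subseteq Q$, and then carve the claimed basis out of it. Starting from any strict overring $Q^* \supsetneq Q$ inside $Q_K$ (which exists because $Q$ is not maximal), let $N \geq 1$ be minimal with $\pi^N Q^* \subseteq Q$. I will set $Q' := Q + \pi^{N-1}Q^*$. By minimality of $N$, $\pi^{N-1}Q^* \not\subseteq Q$, so $Q' \supsetneq Q$; and $\pp Q' = \pi Q + \pi^N Q^* \subseteq Q$. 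Moreover $Q'$ is closed under multiplication:
\[
  Q'\cdot Q' \subseteq Q + \pi^{N-1}(Q \cdot Q^*) + \pi^{2N-2}(Q^*)^2 \subseteq Q + \pi^{N-1}Q^* = Q',
\]
using $Q\cdot Q^* \subseteq Q^*$ and $2N - 2 \geq N - 1$.

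Next, $Q'/Q$ is a nonzero $R/\pp$-vector space of dimension some $k \geq 1$. I pick $y_1, \ldots, y_k \in Q'$ whose reductions form a basis and set $x_i := \pi y_i \in Q$. A short chase shows that $\bar x_1, \ldots, \bar x_k, \bar 1$ are linearly independent in $Q/\pp Q$: a relation $\sum c_i x_i \in \pi Q$ rewrites as $\sum c_i y_i \in Q$, forcing each $c_i \in \pp$ by independence of the $\bar y_i$; and if $\bar 1$ lay in the span, the equation $1 = \sum c_i x_i + \pi z$ would give $\pi^{-1} \in Q'$, hence $K \subseteq Q'$, contradicting finite generation of $Q'$ over $R$. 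By Nakayama I may extend $\bar x_1, \ldots, \bar x_k, \bar 1$ to a basis of $Q/\pp Q$ and lift it to a basis $x_1, \ldots, x_{n-1}, x_n = 1$ of $Q$.

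Finally, I check that $R\langle \pp^{-1}x_1, \ldots, \pp^{-1}x_k, x_{k+1}, \ldots, x_n\rangle$ coincides with $Q'$: the right-hand side contains each $y_i = \pi^{-1}x_i$ and all of $Q$, so contains $Q' = Q + \sum R y_i$; conversely, each of its listed generators already lies in $Q'$. Hence the right-hand side inherits the ring structure of $Q'$, proving the lemma.

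The main obstacle is the first step. Obvious candidates for $Q'$, such as the preimage $\{a \in Q^* : \pi a \in Q\}$ or the image under some single-power thaw, are $R$-modules but not visibly rings, because the natural bound on products lands one filtration step too high. The formula $Q + \pi^{N-1}Q^*$, tuned to the minimality of $N$, sidesteps this by forcing the leftover product $\pi^{2N-2}(Q^*)^2$ to fall inside $\pi^{N-1}Q^*$ automatically for $N \geq 1$.
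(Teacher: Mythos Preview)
Your proof is correct and follows essentially the same route as the paper's: construct $Q' = Q + \pi^{N-1}Q^*$ from a minimal $N$ with $\pi^N Q^* \subseteq Q$, then build the basis via $\pp Q'/\pp Q \subseteq Q/\pp Q$ (your passage through $Q'/Q$ and multiplication by $\pi$ is the same thing up to the isomorphism $Q'/Q \xrightarrow{\pi} \pp Q'/\pp Q$). Your write-up actually supplies two details the paper leaves implicit---the verification that $Q'$ is closed under multiplication, and the reason $\bar 1$ is independent of $\bar x_1,\ldots,\bar x_k$ (equivalently, $1 \notin \pp Q'$)---so if anything it is a slightly more complete version of the same argument.
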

\begin{proof}
Let $Q_1 \supsetneq Q$ be a larger algebra. Since $Q_1$ is a finitely generated submodule of $Q_K = \bigcup_{i\geq 0} \pp^{-i}Q$, it is contained in some $\pp^{-r}Q$. Pick $r$ such that
\[
  Q_1 \subseteq \pp^{-r}Q \quad \text{but} \quad Q_1 \nsubseteq \pp^{-r+1}Q.
\]
Then $Q' = Q + \pp^{r-1}Q_1$ is a rank-$n$ algebra such that
\[
  Q \subsetneq Q' \subseteq \pp^{-1}Q.
\]
Choose a basis $\tilde x_1,\ldots, \tilde x_k$ for the $R/\pp R$-vector space $\pp Q'/\pp Q$, and complete it to a basis $\tilde x_1,\ldots, \tilde x_n$ for $Q/\pp Q$. Since $1 \notin \pp Q'$, we can arrange for $\tilde x_n = 1$. Then by Nakayama's lemma, any lifts $x_1,\ldots, x_n$ generate $Q$, and
\[
  \pp^{-1}x_1, \ldots, \pp^{-1}x_k, x_{k+1}, \ldots, x_n
\]
generate $Q'$.
\end{proof}
\begin{thm}[cf.~\cite{B3}, pp.~1357--58]
Let $Q$ be a quartic algebra over a DVR $R$ with maximal ideal $\pp$, and let $\phi : Q/R \to M$, $\theta : \Lambda^3(Q/R) \to \Lambda^2 M$ be a resolvent. Then $Q$ is non-maximal if and only if, under some choice of bases, the matrices $(A,B)$ representing $\phi$ satisfy one of the following conditions:
\begin{enumerate}[$(a)$]
  \item $\pp^2$ divides $a_{11}$, and $\pp$ divides $a_{12}$, $a_{13}$, and $b_{11}$.
  \item $\pp$ divides $a_{11}$, $a_{12}$, $a_{22}$, $b_{11}$, $b_{12}$, and $b_{22}$.
  \item $\pp^2$ divides $a_{11}$, $a_{12}$, and $a_{22}$, and $\pp$ divides $a_{13}$ and $a_{23}$.
  \item $\pp$ divides all $a_{ij}$.
\end{enumerate}
\begin{proof}
The basic strategy is to find a suitable extension of the resolvent map to the ring $Q'$ in Lemma \ref{lem:newring}, examining the cases where $k$ is $1$, $2$, and $3$.

First assume that $Q$ has content $1$ (by which we mean that the content ideal $\ct(Q)$ is the whole of $R$). Then $k$ is $1$ or $2$ and $Q'$ also has content $1$. Both $Q$ and $Q'$ have unique (minimal and numerical) resolvents $(M,\theta,\phi)$ and $(M',\theta',\phi')$, where (since $Q_K = Q'_K$) we have $M \subseteq M' \subseteq M_K$, and $\theta$ and $\phi$ are the restrictions of $\theta'$ and $\phi'$. Also, since $Q$ has index $\pp^k$ in $Q'$, $M$ has index $\pp^k$ in $M'$.

If $k = 1$, then we can arrange our coordinates such that
\begin{gather*}
  Q/R = \langle\txi_1,\txi_2,\txi_3\rangle, Q'/R = \langle\pi^{-1}\txi_1,\txi_2,\txi_3\rangle \\
  M = \langle\eta_1,\eta_2\rangle, M' = \langle\eta_1,\pi\eta_2\rangle.
\end{gather*}
Now since $\phi' : Q'/R \to M'$ is the extension of $\phi$, its corresponding matrix $(A',B')$ is given by a straightforward change of basis:
\[
  (A',B') = \left(
  \begin{bmatrix}
    \pi^{-2} a_{11} & \frac{1}{2} \pi^{-1} a_{12} & \frac{1}{2} \pi^{-1} a_{13} \\
    \frac{1}{2} \pi^{-1} a_{12} & a_{22} & \frac{1}{2} a_{23} \\
    \frac{1}{2} \pi^{-1} a_{13} & \frac{1}{2} a_{23} & a_{33}
  \end{bmatrix},
  \begin{bmatrix}
     \pi^{-1} b_{11} & \frac{1}{2} b_{12} & \frac{1}{2} b_{13} \\
    \frac{1}{2} b_{12} & \pi b_{22} & \frac{1}{2} \pi b_{23} \\
    \frac{1}{2} b_{13} & \frac{1}{2} \pi b_{23} & \pi b_{33}
  \end{bmatrix}
  \right)
\]
The entries of this matrix (sans $1/2$'s) must belong to $R$, giving the divisibilities listed in case (a) above.

If $k = 2$, then the proof works similarly, except that $M'$ takes one of the two forms $\<\pi\eta_1,\pi\eta_2\>$ and $\<\eta_1,\pi^2\eta_2\>$. We leave it to the reader to write out the corresponding matrices $(A',B')$ and conclude cases (b) and (c) above, respectively.

We are left with the case that $\ct(Q) \neq 1$, that is, there is a quartic ring $Q'$ with $Q = R + \pi Q'$. (A priori we might only have a ring $Q''$ with $Q = R + \pi^k Q''$, $k \geq 1$; but then $Q' = R + \pi^{k-1}$ has the aforementioned property.) Then we may select bases for $Q$ and $Q'$ in the form of Lemma \ref{lem:newring}, with $k = 3$. Since the resolvent is no longer unique, we must take care in choosing the new target module $M'$ of the resolvent $\phi'$. Since $\phi$ is quadratic and $Q'/R = \pi^{-1}(Q/R)$, a natural candidate is $M' = \pi^{-2} M$, but unfortunately this is too large: we have $[M' : M] = \pp^4$ but $[Q'/R : Q/R] = \pp^3$, so $\theta'$ cannot possibly be an isomorphism. However, since $\ct(Q) \neq 1$, we have $\phi(Q/R) \subsetneq M$, so picking a sublattice $L \subsetneq M$ of index $\pp$ containing $\phi(Q/R)$, we get that $M' = \pp^{-2}L$ yields a workable resolvent. Note that $\pp^{-2}M \subsetneq M' \subsetneq \pp^{-1}M$, so we can take a basis such that
\[
  M = \langle\eta_1,\eta_2\rangle \textand M' = \langle\pi^{-1}\eta_1, \pi^{-2}\eta_2\rangle.
\]
We then get
\[
  (A',B') = \left(
  \begin{bmatrix}
    \pi^{-1} a_{11} & \frac{1}{2} \pi^{-1} a_{12} & \frac{1}{2} \pi^{-1} a_{13} \\
    \frac{1}{2} \pi^{-1} a_{12} & \pi^{-1} a_{22} & \frac{1}{2} \pi^{-1} a_{23} \\
    \frac{1}{2} \pi^{-1} a_{13} & \frac{1}{2} \pi^{-1} a_{23} & \pi^{-1} a_{33}
  \end{bmatrix},
  \begin{bmatrix}
    b_{11} & \frac{1}{2} b_{12} & \frac{1}{2} b_{13} \\
    \frac{1}{2} b_{12} & b_{22} & \frac{1}{2} b_{23} \\
    \frac{1}{2} b_{13} & \frac{1}{2} b_{23} & b_{33}
  \end{bmatrix}
  \right),
\]
yielding condition (d).

Conversely, if one of the conditions (a)--(d) holds, the foregoing calculations suggest how to embed $L = Q/R$ and $M$ into lattices $L'$ and $M'$ with $L \subsetneq L'$, such that the extensions of $\theta$ and $\phi$ still form a resolvent, yielding a quartic ring $Q'$ that contains $Q$ as a proper subring.
\end{proof}
\end{thm}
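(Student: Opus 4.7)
The plan is to combine Lemma \ref{lem:newring} with Theorem \ref{thm:content} and then do a direct change-of-basis computation on the matrices $(A,B)$ within each case. Non-maximality of $Q$ is equivalent, by the lemma, to the existence of an $R$-basis $x_1,\ldots,x_3,1$ of $Q$ and some $k \in \{1,2,3\}$ such that $Q' = R\<\pi^{-1}x_1,\ldots,\pi^{-1}x_k,x_{k+1},\ldots,1\>$ is a ring. I will analyse this in two broad cases according as $\ct(Q) = R$ or $\ct(Q) \neq R$, which will correspond to $k \in \{1,2\}$ or to $k = 3$ respectively.

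In the content-$R$ case, the key observation is that $k=3$ would force $Q = R + \pi Q'$ and hence $\pp \mid \ct(Q)$, a contradiction; so $k \in \{1,2\}$. Theorem \ref{thm:content} then guarantees unique minimal/numerical resolvents for both $Q$ and $Q'$, tying their target modules by $M \subseteq M' \subseteq M_K$ with $[M' : M] = \pp^k$. After a change of basis on $M$, I place the inclusion $M \subseteq M'$ in elementary-divisor form: $M' = \<\eta_1, \pi^{-1}\eta_2\>$ when $k=1$; $M' = \pi^{-1}M$ or $M' = \<\eta_1, \pi^{-2}\eta_2\>$ when $k=2$. Rewriting $\phi$ in the new bases of $Q'/R$ and $M'$ simply multiplies each entry of $(A,B)$ by an explicit power of $\pi^{\pm 1}$, and demanding that the resulting $(A',B')$ have entries in $R$ reads off conditions $(a)$, $(b)$, and $(c)$ respectively.

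The case $\ct(Q) \neq R$ is the main obstacle. Here Theorem \ref{thm:content}(b) supplies $Q''$ with $Q = R + \pi Q''$, so $Q$ is automatically non-maximal and the $k=3$ pattern applies with $Q' = Q''$. The complication is that the resolvent is no longer unique, so the naive choice $M' = \pi^{-2}M$ has index $\pp^4$ over $\phi(Q/R)$, which is too large to correspond to a numerical resolvent of $Q'$. The resolution exploits the fact that nontrivial content forces $\phi(Q/R) \subsetneq M$: I pick an intermediate lattice $L$ with $\phi(Q/R) \subseteq L \subsetneq M$ of index $\pp$, and set $M' = \pi^{-2}L$, which has the correct index $\pp^3 = [Q'/R : Q/R]$ over $M$. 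In a basis realising $M' = \<\pi^{-1}\eta_1, \pi^{-2}\eta_2\>$, the same mechanical computation as before produces condition $(d)$: every $a_{ij}$ must lie in $\pp$.

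The converse is read off in each case by reversing the change of basis: given the divisibility hypotheses of $(a)$, $(b)$, $(c)$, or $(d)$, the candidate enlarged matrix $(A',B')$ has entries in $R$, so one defines $Q'$ as the $R$-span of the enlarged basis, extends $\theta$ and $\phi$ over the enlarged $M'$, and invokes Theorem \ref{thm:quartic} to endow $Q'$ with the structure of a quartic $R$-algebra properly containing $Q$, witnessing non-maximality.
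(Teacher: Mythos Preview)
Your proposal is correct and follows essentially the same route as the paper: the content dichotomy, the invocation of Lemma~\ref{lem:newring} to produce $Q'$, the elementary-divisor placement of $M \subseteq M'$, and the mechanical rescaling of $(A,B)$ in each of the cases $k=1,2,3$ are all exactly what the paper does. Your explicit treatment of the converse via Theorem~\ref{thm:quartic} is a welcome addition that the paper leaves implicit.
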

\section{Conclusion}
We have found the Dedekind domain to be a suitable base ring for generalizing the integral parametrizations of algebras and their ideals by Bhargava and his forebears. In each case, ideal decompositions $\aa_1 \oplus \cdots \oplus \aa_n$ fill the role of $\ZZ$-bases, and elements of appropriate fractional ideals take the place of integers in the parameter spaces. We have also shown that the notion of ``balanced,'' introduced by Bhargava to describe the ideal triples parametrized by general nondegenerate $2\times 2\times 2$ cubes, has some beautiful properties and is worthy of further study. We expect that the methods herein will extend to replicate the other parametrizations in Bhargava's ``Higher Composition Laws'' series and may shed light on the analytic properties of number fields and orders of low degree over base fields other than $\QQ$.

A generalization to quintic algebras over a Dedekind domain, following \cite{B4}, has been found. Details are to appear in a forthcoming publication (see \url{arxiv.org/abs/1511.03162}).

\begin{backmatter}

\section*{Competing interests}
  I have no competing interests.


\section*{Acknowledgements}
A previous version of this paper served as my senior thesis at Harvard College. I thank my thesis advisor, Benedict Gross, for many helpful discussions and comments. I thank Melanie Wood for clarifications on the relationships between my work and hers. I thank Arul Shankar for useful discussions, especially for informing me that he and Wood had been interested in the question answered by Corollary \ref{cor:at least 1 res}. I thank Brian Conrad for the suggestion that I work with Prof.~Gross. I thank Noam Elkies and the editors for hunting down some subtle errors. I thank Ken Ono for suggesting RMS as a publication venue.


\bibliography{thesis_rms}
\bibliographystyle{bmc-mathphys}

\section*{Figures}
  \begin{figure}[h!]
\centering
\includegraphics[height=1.5in]{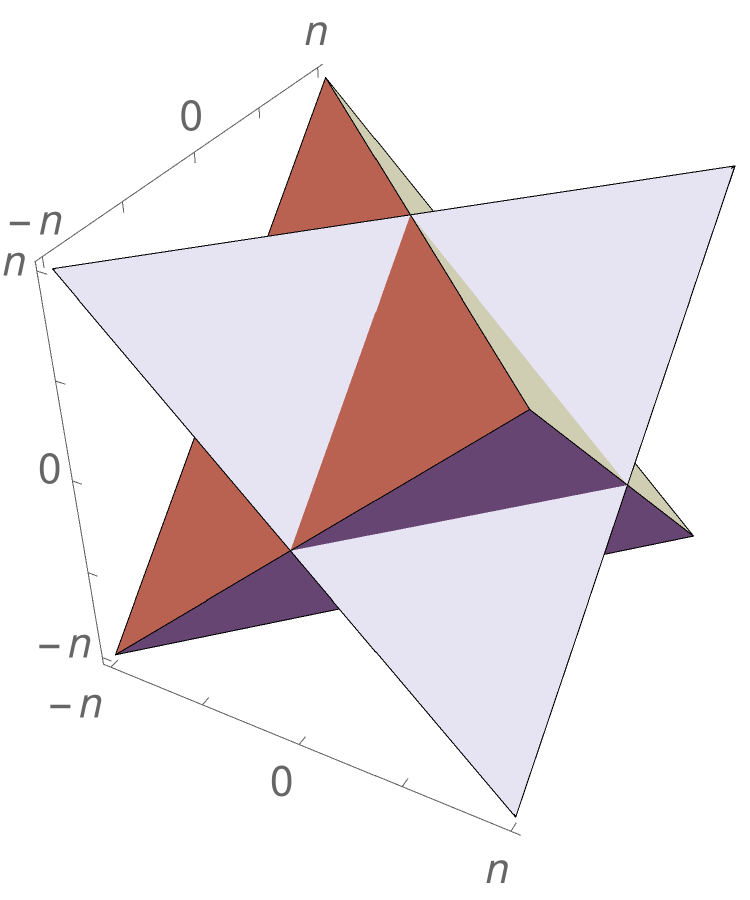}
\caption{\csentence{Stella octangula} showing the range of ideal triples in $\ZZ_p[p^n\sqrt{u}]$ that are balanced}
\label{fig:stella}
\end{figure}

\end{backmatter}

\end{document}